\newcommand\X{{\mathcal X}}
\newcommand\Y{{\mathcal Y}}
\newcommand\xibar{\overline{\xi}}
\newcommand\Pbar{\overline{P}}
\newcommand\phibar{\overline{\varphi}}
\renewcommand\P{\mathbb{P}}
\newcommand\A{\mathbb{A}}
\newcommand\sQ{\mathcal{Q}}
\renewcommand\O{\mathcal{O}}
\newcommand\sL{\mathcal{L}}
\newcommand\sT{\mathcal{T}}
\newcommand\sS{\mathcal{S}}
\newcommand\sV{\mathcal{V}}
\newcommand\sI{\mathcal{I}}
\newcommand\Q{\mathbb{Q}}
\newcommand\F{\mathbb{F}}
\newcommand\Sym{\mathop{\rm Sym}\nolimits}
\newcommand\SL{\mathop{\rm SL}\nolimits}
\newcommand\PSL{\mathop{\rm PSL}\nolimits}
\newcommand\Hom{\mathop{\rm Hom}\nolimits}
\renewcommand\H{{\rm H}}
\newcommand\Res{\mathop{\rm Res}\nolimits}
\newcommand\Pic{\mathop{\rm Pic}\nolimits}
\newcommand\Gal{\mathop{\rm Gal}\nolimits}
\newcommand\Br{\mathop{\rm Br}\nolimits}
\newcommand\Id{\mathop{\rm Id}\nolimits}
\newcommand\im{\mathop{\rm im}\nolimits}
\newcommand\Sel{\mathop{\rm Sel}\nolimits}
\newcommand\GL{\mathop{\rm GL}\nolimits}
\newcommand\Aut{\mathop{\rm Aut}\nolimits}
\renewcommand\check[1]{\hat{#1}}
\newcommand\isom{\cong}
\newcommand\Gammasep{{\Gamma^{\rm s}}}
\newcommand\ksep{{k^{\rm s}}}
\newcommand\Lsep{{L^{\rm s}}}
\newcommand\Ksep{{K^{\rm s}}}
\newcommand\Lambdasep{{\Lambda^{\rm s}}}
\newcommand\Mm{M} 
\newcommand\Mu{\mathcal{M}}
\newcommand\rY{r_\Y}
\newcommand\rJ{r_J}
\newcommand\Spec{{\rm Spec}}
\newtheorem{theorem}{Theorem}[section]
\newtheorem{lemma}[theorem]{Lemma}
\newtheorem{proposition}[theorem]{Proposition}
\newtheorem{corollary}[theorem]{Corollary}
\newtheorem{remark}[theorem]{Remark}
\newtheorem{definition}[theorem]{Definition}
\def\clap#1{\hbox to0pt{\hss#1\hss}}
\def\hsmash{\mathpalette\dohsmash}
\def\dohsmash#1#2{\clap{$\mathsurround=0pt#1{#2}$}}
\title{Two-coverings of Jacobians of curves of genus two}
\author{E.\ Victor Flynn}
\author{Damiano Testa}
\address{Mathematical Institute, University of Oxford, 24-29 St Giles', OX1 3LB, Oxford, UK}
\email{flynn@maths.ox.ac.uk}
\email{testa@maths.ox.ac.uk}
\author{Ronald van Luijk}
\address{
Mathematisch Instituut,
Universiteit Leiden,
Postbus 9512,
2300 RA, 
Leiden, The Netherlands}
\email{rvl@math.leidenuniv.nl}
\begin{document}

\subjclass[2000]{Primary 11G30; Secondary 11G10, 14H40}
\keywords{Coverings, Jacobians, Homogeneous spaces}
\date{\today}

\begin{abstract}
Given a curve~$C$ of genus~$2$ defined over a field~$k$
of characteristic different from~$2$, with Jacobian variety~$J$,
we show that the two-coverings
corresponding to elements of a large subgroup of 
$\H^1\big(\Gal(\ksep/k),J[2](\ksep)\big)$ (containing the Selmer group
when $k$ is a global field) can be embedded as intersection of $72$
quadrics in $\P^{15}_k$, just as the Jacobian~$J$ itself. 
Moreover, we actually give explicit equations for the
models of these twists in the generic case, extending the
work of Gordon and Grant which applied only to the case when
all Weierstrass points are rational.
In addition, we describe elegant equations on the Jacobian itself,
and answer a question of Cassels and the first author concerning
a map from the Kummer surface in~$\P^3$ to the desingularized
Kummer surface in~$\P^5$.
\end{abstract}

\maketitle

\section{Introduction}

The number of rational points 
on a curve of geometric genus at least two defined over a number field is
finite by Faltings' Theorem~\cite{faltings} . However, for any fixed number field $k$, it is not known whether 
there exists an algorithm
that takes such a curve $C/k$ as input and computes the set $C(k)$ of
all its rational points. There are advanced techniques that often work in
practice, such as the Chabauty-Coleman method~\cite{coleman} and the
Mordell-Weil sieve 
(see~\cite{bruinstollmw,flynnmw,scharthesis}). 
Bjorn Poonen~\cite{heuristicsBM} has shown,
subject to two natural heuristic assumptions,
that with probability $1$ 
the latter method is indeed capable of determining whether or not a
given curve of genus at least $2$ over a number field contains a 
rational point; this assumes the existence of a 
Galois-invariant divisor
of degree $1$ on the curve.

Both methods assume the knowledge of the finitely generated 
Mordell-Weil group $J(k)$ of the Jacobian $J$ of the curve $C$ over
the number field $k$, or at  
least of a subgroup of finite index; in particular it assumes the 
knowledge of the rank of the group $J(k)$, which in general is hard to
find, but can be bounded by a so-called two-descent.

Let $k$ be any field with separable closure $\ksep$ and let $C$ be a
smooth projective curve over $k$ with Jacobian $J$. Taking Galois
invariants of the short exact sequence 
$$
\xymatrix{
0 \ar[r]& J[2](\ksep) \ar[r]& J(\ksep) \ar[r]^{[2]} &J(\ksep)\ar[r] & 0
}
$$
associated to multiplication by $2$ gives a long exact sequence of
which the first connecting map induces an injective homomorphism 
$$
\iota \colon \, J(k)/2J(k) \to \H^1\big(\Gal(\ksep/k),J[2](\ksep)\big).
$$
If $J(k)$ is finitely generated and its torsion subgroup is known,
then the rank of $J(k)$ is easy to read off from the size of 
$J(k)/2J(k)$, and thus from its image under $\iota$. A two-descent 
consists of bounding this image. When $k$ is a global field, the 
image of $\iota$ is contained in the so-called Selmer group, which is finite 
and computable (see~\cite{cassels,posch,schaefer}). 

We restrict our attention to the case that $C$ has genus $2$ and
the characteristic of $k$ is not equal to $2$.
The elements of 
$\H^1\big(\Gal(\ksep/k),J[2](\ksep)\big)$ can be
represented by two-coverings of $J$, which are twists of the 
multiplication-by-$2$ map as defined in the next section.
The elements in the image of $\iota$ correspond to those two-coverings
that have a $k$-rational point.
When $k$ is a global field, the elements of the Selmer group 
correspond to those two-coverings that are locally solvable everywhere.

The main goal of this paper is to show that the two-coverings
corresponding to elements of a large subgroup of 
$\H^1\big(\Gal(\ksep/k),J[2](\ksep)\big)$ (containing the Selmer group
when $k$ is a global field) can be embedded as intersection of $72$
quadrics in $\P^{15}_k$, just as the Jacobian itself. Moreover, 
we investigate various representations of $J[2](\ksep)$ and certain 
extensions, in order to actually give explicit equations for the
models of these
twists, cf.\ Theorem~\ref{maintheorem}. A better understanding of these
representations has allowed us to find simple and symmetric 
equations also
for the Jacobian. We work over a generic field $k$, where all coefficients 
in the equation $y^2=f(x)$ for $C$ are independent transcendentals, as 
well as the coefficients of the element in 
$k[X]/f$ that determines the 
twist of $J$. This field is far to big to find the equations 
of the twist by brute force.

These models are useful in practice to find 
rational points on the twists, and thus to decide whether a given
two-covering corresponds to an element in the image of $\iota$. 
As a further application, we expect that our explicit 
models will prove useful in the study of heights on Jacobians.
We also answer a question 
by Cassels and the first author~\cite[Section~16.6]{caf} in Remark~\ref{mapXY}. 

The explicit equations we shall give for
nontrivial two-coverings corresponding to elements of the Selmer
group generalize to any curve of genus~2 those given
by Gordon and Grant in~\cite{gordongrant} for the special 
case where all Weierstrass points are rational.

In Section~\ref{sectionsetup} we set up the necessary cohomological
sequences, give a description of the large subgroup of 
$\H^1\big(\Gal(\ksep/k),J[2](\ksep)\big)$ that was
mentioned (see Corollary~\ref{Pone}), define two-coverings, 
Selmer groups, and prove some known results about two-coverings 
for completeness. In Section~\ref{modelssection} we  
find models of the Jacobian $J$ in $\P^{15}$, its Kummer surface
$\X$ in $\P^9$, and the minimal desingularization $\Y$ 
of $\X$ in $\P^5$ on which for every $P \in J[2]$, 
the action of translation by $P$ is just given by 
negating some of the coordinates. Another description of the 
desingularized Kummer surface is given in Section~\ref{desingkummer}.
This is used in Section~\ref{sectionaction} to understand 
how the linear action of $J[2](\ksep)$ on the model of $J$ in
$\P^{15}$ can be obtained from the action
on $\X\subset \P^3$ and $\Y\subset \P^5$. In Section~\ref{diagonalizing} we first show theoretically
how the action of $J[2](\ksep)$ on $J \subset \P^{15}$ can be
diagonalized and then also do so explicitly.
In Section~\ref{twistingsection}
we describe how to use the models and this diagonalized action 
to obtain the desired twists.

The authors thank 
Nils Bruin, Alexei Skorobogatov and Michael Stoll
for useful discussions and remarks. The first two authors thank 
EPSRC for support through grant number EP/F060661/1. The first and
third author thank the International Center for Transdisciplinary 
Studies at Jacobs University Bremen for support and
hospitality. 
The second author thanks Jacobs University Bremen and was partially funded by DFG grant STO-299/4-1. The third author thanks the University of British Columbia, Simon Fraser University, PIMS, and Warwick University. 

\section{Set-up}\label{sectionsetup}

Let $k$ be a field of characteristic not equal to two, 
$\ksep$ a separable closure of $k$, and 
$f = \sum_{i=0}^6 f_i X^i \in k[X]$ a separable polynomial with $f_6 \neq 0$. 
Denote by $\Omega$ the set of the six roots of $f$ in $\ksep$, so that  
$k(\Omega)$ is the splitting field of $f$ over $k$ in $\ksep$.
Let $C$ be the smooth projective curve of genus $2$ over $k$ 
associated to the affine curve in $\A^2_{x,y}$ given by $y^2= f(x)$. 
Let $J$ denote 
the Jacobian of $C$ and $J[2]$ its two-torsion subgroup. We denote the 
multiplication-by-$2$ map on $J$ by $[2]$. All two-torsion points 
are defined over $k(\Omega)$, i.e., $J[2](k(\Omega)) = J[2](\ksep)$. 
Let $W\subset C$ be the set of Weierstrass points of $C$, 
corresponding to the set $\{ (\omega,0) \,\, : \,\, \omega \in \Omega\}$
of points on the affine curve. Choose a canonical divisor $K_C$ of $C$. 
For any $w \in W$, the divisor $2(w)$ is linearly equivalent 
to $K_C$ and $\sum_{w\in W} (w)$ is linearly equivalent to $3K_C$. 

There is a morphism $C \times C \rightarrow J$ sending $(P,Q)$ to 
the divisor class $(P)+(Q)-K_C$, which factors through the symmetric 
square $C^{(2)}$. The induced map $C^{(2)} \rightarrow J$ is birational
and each nonzero element of 
$J[2](\ksep)$ is represented by $(w_1)-(w_2)\sim (w_2)-(w_1)\sim 
(w_1)+(w_2)-K_C$ for a unique  
unordered pair $\{w_1,w_2\}$ of distinct Weierstrass points. This yields a 
Galois equivariant bijection between nonzero two-torsion points 
and unordered pairs of distinct elements in $\Omega$. 

Set $L= k[X]/f$ and $\Lsep = L \otimes_k \ksep$. By abuse of 
notation we denote the image of $X$ in $L$ and $\Lsep$ by 
$X$ as well. For any $\omega \in \Omega$, let $\varphi_\omega$ 
denote the $\ksep$-linear map $\Lsep \rightarrow \ksep$ that 
sends $X$ to $\omega$. By the Chinese Remainder Theorem, the 
induced map $\varphi = \bigoplus_{\omega \in \Omega}\varphi_\omega 
\colon \, \Lsep \rightarrow \bigoplus_{\omega \in \Omega} \ksep$
is an isomorphism of $\ksep$-algebras that sends $X$ to 
$(\omega)_{\omega}$. The induced Galois action on 
$\bigoplus_{\omega} \ksep$ is given by 
$$
\sigma \big( (c_\omega)_\omega \big)= 
   \big( \sigma(c_{\sigma^{-1}(\omega)})\big)_\omega
$$
for all $\sigma \in \Gal(\ksep/k)$.
For any commutative ring $R$ we write $\mu_2(R)$ for the kernel 
of the homomorphism $R^*\rightarrow R^*$ that sends $x$ to $x^2$. 
We sometimes abbreviate $\mu_2(\ksep) = \mu_2(k)$ to $\mu_2$. 
The isomorphism $\varphi$ induces an isomorphism of groups
$\mu_2(\Lsep) \rightarrow \bigoplus_{\omega \in \Omega} \mu_2(\ksep)$.

The norm map $N_{L/k}$ from $L$ to $k$, sending $\alpha \in \Lsep$ to 
$\prod_{\omega} \varphi_\omega(\alpha)$, 
induces homomorphisms from 
$\mu_2(\Lsep)$ and $\mu_2(\Lsep)/\mu_2(\ksep)$ to $\mu_2(\ksep)$, 
both of which we denote by $N$ and refer to as norms. 
The kernel $\Mm$ of $N$ on $\mu_2(\Lsep)$ is generated by 
elements $\alpha_{\omega_1,\omega_2}$ defined by
$\varphi_\omega(\alpha_{\omega_1,\omega_2}) = -1$ if and only if 
$\omega \in \{\omega_1,\omega_2\}$. Let $\beta\colon \Mm 
\rightarrow J[2](\ksep)$ be the homomorphism that maps 
$\alpha_{\omega_1,\omega_2}$ to the difference of Weierstrass points 
$\big((\omega_1,0)\big) - \big((\omega_2,0)\big)$. Finally, let 
$\epsilon \colon J[2](\ksep) \rightarrow \mu_2(\Lsep)/\mu_2(\ksep)$ 
be the homomorphism that sends
$\big((\omega_1,0)\big) - \big((\omega_2,0)\big)$ to the class of 
$\alpha_{\omega_1,\omega_2}$.
We get the following diagram of short exact sequences.
For more details, see~\cite[Sections~6 and~7]{posch}.
\begin{equation}\label{Jtwokernel}
\xymatrix{
&1\ar[d]&1\ar[d]\cr
&\mu_2(\ksep) \ar[d] \ar@{=}[r]& \mu_2(\ksep) \ar[d] \cr
1\ar[r] & \Mm \ar[r]\ar[d]_{\beta} &\mu_2(\Lsep) \ar[d]\ar[r]^N&\mu_2(\ksep)\ar[r]
\ar@{=}[d]&1\cr
1 \ar[r] & J[2](\ksep) \ar[r]^\epsilon \ar[d]&\frac{\mu_2(\Lsep)}{\mu_2(\ksep)}
\ar[r]^N \ar[d]& \mu_2(\ksep) \ar[r] & 1\cr
&1&1 \cr
}
\end{equation}
There are natural isomorphisms 
$$
\Hom\big(\mu_2(\Lsep),\mu_2\big) \isom \Hom\left(\bigoplus_\omega \mu_2,\mu_2\right) \isom 
\bigoplus_\omega \Hom(\mu_2,\mu_2) \isom \bigoplus_\omega \mu_2 \isom \mu_2(\Lsep),
$$
so $\mu_2(\Lsep)$ is self-dual. The corresponding perfect pairing 
$\mu_2(\Lsep) \times \mu_2(\Lsep) \rightarrow \mu_2$ sends $(\alpha_1,\alpha_2)$ 
to $(-1)^r$ with $r = \#\{\omega \in \Omega \,:\, \varphi_\omega(\alpha_1) = 
\varphi_\omega(\alpha_2) = -1\}$. The pairing induces a perfect pairing on 
$\Mm \times \mu_2(\Lsep)/\mu_2$ and on $J[2](\ksep) \times J[2](\ksep)$, where it 
coincides with the Weil pairing, which we denote by $e_W$. We conclude that 
$M$ and $\mu_2(\Lsep)/\mu_2$ are each other's duals, that $J[2](\ksep)$ is self-dual, 
and that the entire Diagram~\eqref{Jtwokernel} 
is self-dual under reflection in the 
obvious diagonal. The element $-1 \in \Mm$ corresponds to the character of 
$\mu_2(\Lsep)/\mu_2$ that is the norm map 
$N \colon \mu_2(\Lsep)/\mu_2 \rightarrow \mu_2$.

We define the Brauer group $\Br(k)$ of $k$ as $\H^2(\Gal(\ksep/k),(\ksep)^*)$. 
We only use its two-torsion subgroup $\Br(k)[2]$, which is isomorphic to 
$\H^2(\Gal(\ksep/k),\mu_2(\ksep))$. Recall that there are natural 
isomorphisms $\H^1(\Gal(\ksep/k),\mu_2(\ksep)) \isom k^*/(k^*)^2$ and 
$\H^1(\Gal(\ksep/k),\mu_2(\Lsep)) \isom L^*/(L^*)^2$. 
Taking long exact sequences of Galois cohomology, we find the 
following commutative diagram (cf.~\cite[Section~8]{posch}). 
\begin{equation}\label{maindiagram}
\xymatrix{
&&k^*/(k^*)^2\ar[d]\ar@{=}[r]&k^*/(k^*)^2\ar[d] \\
\mu_2(L)\ar[d]\ar[r]^N&\mu_2(k)\ar[r]\ar@{=}[d]&\H^1(\Mm)\ar[d]^{\beta_*}\ar[r]
&L^*/(L^*)^2 \ar[r]^N \ar[d]& k^*/(k^*)^2\ar@{=}[d] \\ 
\H^0\left(\frac{\mu_2(\Lsep)}{\mu_2(\ksep)}\right) \ar[r]^(.6){N} & \mu_2(k) \ar[r] &
\H^1(J[2](\ksep)) \ar[r]^{\epsilon_*} \ar[d]_{\Upsilon} & 
\H^1\left(\frac{\mu_2(\Lsep)}{\mu_2(\ksep)}\right) \ar[r]^{N_*} \ar[d] & 
k^*/(k^*)^2 \\ 
&& \Br(k)[2] \ar@{=}[r] &  \Br(k)[2] \\
}
\end{equation}
Here and from now on, $\H^1(*)$ stands for the Galois cohomological group 
$\H^1(\Gal(\ksep/k),*)$. We often abbreviate $\H^1(J[2](\ksep))$ further 
to $\H^1(J[2])$. Let $\Upsilon$ denote the connecting homomorphism 
$\Upsilon\colon \, \H^1(J[2]) \rightarrow \Br(k)[2]$ (see 
Diagram~\eqref{maindiagram}). 

\begin{definition}
A global field is a finite extension of $\Q$ or 
a finite extension of $\F_p(t)$ for some prime $p$. A local field is the 
completion of a global field at some place.
\end{definition}

\begin{proposition}\label{JinkerUp}
Assume that $k$ is a global field or a local field.
Then the composition of the map $\iota \colon 
J(k)/2J(k) \rightarrow \H^1(J[2])$ with the map $\Upsilon$ is zero. 
\end{proposition}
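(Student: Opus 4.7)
My plan is to reduce the statement to an explicit lifting problem via Diagram~\eqref{Jtwokernel}. Regarding the inclusion $M\hookrightarrow\mu_2(\Lsep)$ together with $\epsilon\colon J[2]\to\mu_2(\Lsep)/\mu_2$ and the identity on $\mu_2(\ksep)$ as a morphism between the short exact sequences forming the left two columns of that diagram, naturality of the connecting homomorphism in Galois cohomology yields $\Upsilon=\delta\circ\epsilon_*$, where $\delta\colon H^1(\mu_2(\Lsep)/\mu_2)\to H^2(\mu_2(\ksep))=\Br(k)[2]$ is the connecting map of $1\to\mu_2(\ksep)\to\mu_2(\Lsep)\to\mu_2(\Lsep)/\mu_2\to 1$. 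By exactness of the associated long exact sequence, $\ker\delta$ is precisely the image of the quotient $L^*/(L^*)^2=H^1(\mu_2(\Lsep))\to H^1(\mu_2(\Lsep)/\mu_2)$. Hence it suffices to prove that $\epsilon_*(\iota(P))$ lies in this image for every $P\in J(k)$.

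To produce such a lift, I would use the classical $x-T$ construction (cf.\ \cite{schaefer} and~\cite{posch}). Given $P\in J(k)$, choose a divisor representative $(Q_1)+(Q_2)-K_C$ with the unordered pair $\{Q_1,Q_2\}\subset C(\ksep)$ Galois-stable. Writing $T$ for the image of $X$ in $L$, the element
\[
\ell(P):=\bigl(x(Q_1)-T\bigr)\bigl(x(Q_2)-T\bigr)\in L
\]
is then Galois-invariant, and, provided no $Q_i$ lies over a root of $f$, belongs to $L^*$. I claim that the class of $\ell(P)$ in $L^*/(L^*)^2$ maps to $\epsilon_*(\iota(P))$. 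Verifying this is a direct cocycle computation: take $R\in J(\ksep)$ with $[2]R=P$, so that $\iota(P)$ is represented by $\sigma\mapsto\sigma(R)-R$; unravel $\epsilon_*$ using the identification $\varphi\colon\Lsep\cong\bigoplus_{\omega\in\Omega}\ksep$; and compare with the Kummer cocycle of $\ell(P)$ obtained by choosing a componentwise square root of $\ell(P)$ in $\Lsep^*$. The two cocycles coincide in $\mu_2(\Lsep)/\mu_2(\ksep)$, the diagonal sign ambiguity in the componentwise square-root choice being killed by the quotient.

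The step I expect to be the main technical obstacle is the case in which some $Q_i$ lies over a root $\omega\in\Omega$. In that situation $x(Q_i)-T$ becomes a zero divisor in $L$ and $\ell(P)\notin L^*$; one must then replace the offending factor by a suitable $y$-coordinate expression so as to land in $L^*$ while keeping the class in $H^1(\mu_2(\Lsep)/\mu_2)$ unchanged. This is handled by a standard case analysis, carried out in~\cite{schaefer,posch}. Once a lift of $\epsilon_*(\iota(P))$ in $L^*/(L^*)^2$ has been exhibited in every case, exactness of the middle column of Diagram~\eqref{Jtwokernel} gives $\Upsilon(\iota(P))=\delta(\epsilon_*(\iota(P)))=0$, which is the desired vanishing.
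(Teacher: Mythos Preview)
Your approach is correct and takes a genuinely different route from the paper's. The paper first invokes the period results~\cite[Propositions~3.2--3.4]{posch} to identify $\Pic^0 C$ with $J(k)$ over local and global fields, and then cites as a black box the map $\tau\colon\Pic^{(2)}C\to\H^1(J[2])$ from~\cite[Section~9]{posch} together with $\im\tau\subset\ker\Upsilon$ from~\cite[Corollary~9.5]{posch}. You instead unpack the mechanism behind that corollary: the explicit $(x-T)$ construction furnishes a lift of $\epsilon_*(\iota(P))$ to $L^*/(L^*)^2$, whence $\Upsilon(\iota(P))=\delta(\epsilon_*(\iota(P)))=0$ by exactness.

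Your route buys two things. First, it is more self-contained and essentially produces the explicit Cassels map formula (the paper's Proposition~2.4) along the way rather than as a separate statement. Second, and more notably, it never uses the local/global hypothesis: the step ``choose a Galois-stable representative $(Q_1)+(Q_2)-K_C$'' works over any $k$ of characteristic $\neq 2$, since for nonzero $P\in J(k)$ the fiber of the birational morphism $C^{(2)}\to J$ over $P$ is a single $\ksep$-point, hence $k$-rational, and $K_C=(\infty^+)+(\infty^-)$ is already $k$-rational. So you are in fact proving the proposition without the hypothesis on $k$; the period argument in the paper is an artefact of citing results stated in greater generality. Two small additions are worth making: besides the Weierstrass case you flag, you should also treat $Q_i\in\{\infty^+,\infty^-\}$, where $x(Q_i)$ is undefined (handled by the same moving-lemma conventions in~\cite{schaefer,posch}); and while the cocycle comparison you sketch is indeed standard, it is not quite as immediate as ``direct'' suggests---it is the content of, e.g.,~\cite[Theorem~1.1]{schaefer} or~\cite[Section~5]{posch}, and it would be fair to cite it rather than claim it as a computation left to the reader.
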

\begin{proof}
As in~\cite[Section~3]{posch}, we let the period of a curve 
$D$ over a field $K$ be the greatest common divisor of the degrees
of all $K$-rational divisor classes of $D$. 
If $k$ is a local field, then since the genus $g$ of $C$ 
equals $2$, by~\cite[Proposition~3.4]{posch}, the period of $C$ 
divides $g-1=1$, so it equals $1$, and by~\cite[Proposition~3.2]{posch},
this implies that the natural inclusion 
$\Pic^0 C \rightarrow \H^0(\Pic^0 C_{\ksep}) = J(k)$ is an 
isomorphism. If $k$ is a global field, then by the local argument,
the period of $C$ over any completion equals $1$, and by~\cite[Proposition~3.3]{posch},
this implies again that the inclusion 
$\Pic^0 C \rightarrow J(k)$ is an isomorphism
(see also last paragraph of~\cite[Section~4]{posch}). We conclude 
that in either case the inclusion 
$\rho\colon \Pic^0 C \rightarrow J(k)$ is an isomorphism.
Let $\Pic^{(2)} C$ 
denote the subgroup of divisor classes of even degree in $\Pic C$. 
By~\cite[Section~9]{posch}, there is a homomorphism 
$\tau \colon \Pic^{(2)} C \rightarrow \H^1(J[2])$ whose image is 
contained in the kernel of $\Upsilon$ (see~\cite[Corollary~9.5]{posch}), and
such that the restriction of $\tau$ to $\Pic^0 C$ factors as the 
composition of the map 
$\overline{\rho} \colon \Pic^0 C \rightarrow J(k)/2J(k)$ induced by 
$\rho$ and the map $\iota$. Since $\overline{\rho}$ is surjective,
we conclude that the image of $\iota$ is indeed contained in the 
kernel of $\Upsilon$.
\end{proof}

We denote the kernel of $\Upsilon$ by $P^1(J[2])$. 

\begin{remark}\label{selmer}
Suppose $k$ is a global field. For each place $v$ of $k$ we let $k_v$ denote the completion of $k$ at $v$ and 
$$
\iota_v \colon J(k_v)/2J(k_v) \rightarrow \H^1\big(\Gal(k^{\rm s}_v/k_v,J[2](k^{\rm s}_v))\big)
$$
the connecting map defined analogously to $\iota\colon J(k)/2J(k) \rightarrow \H^1(J[2])$. We get a natural diagram 
$$
\xymatrix{
J(k)/2J(k) \ar[r]^\iota \ar[d] & \H^1(k, J[2](\ksep)) \ar[r] \ar[rd] \ar[d] & \H^1(k,J(\ksep)) \ar[d] \\
\prod_v J(k_v)/2J(k_v) \ar[r]^{(\iota_v)_v} & \prod_v \H^1(k_v, J[2](k_v^{\rm s})) \ar[r]& \prod_v \H^1(k_v,J(k_v^{\rm s})) 
}
$$
and define the Selmer group $\Sel^2(J,k)$ to be the kernel of
$\H^1(k, J[2](\ksep))\rightarrow \prod_v \H^1(k_v,J(k_v^{\rm s}))$, i.e., 
the inverse image under the middle vertical map of the image of the lower-left 
horizontal map. Then the image of $\iota$ is contained in $\Sel^2(J,k)$. 
It follows from Proposition~\ref{JinkerUp}, applied to all $k_v$, and the fact that the natural diagonal map $\Br k \to \prod_v \Br k_v$ is injective, 
that the Selmer group $\Sel^2(J,k)$ is contained in $P^1(J[2])$.
\end{remark}

By Diagram~\eqref{maindiagram},
the kernel of the homomorphism $\H^1\big(\mu_2(\Lsep)/\mu_2(\ksep)\big)
\rightarrow \Br(k)[2]$ is isomorphic to the image of 
$L^*/{L^*}^2$ in $\H^1\big(\mu_2(\Lsep)/\mu_2(\ksep)\big)$ and this 
image is isomorphic to $L^*/{L^*}^2k^*$. This implies that 
$\epsilon_*$ induces a homomorphism $\kappa \colon P^1(J[2]) \rightarrow 
L^*/{L^*}^2k^*$. By Proposition~\ref{JinkerUp}, we may compose 
$\kappa$ and $\iota$. 
 
\begin{definition}
The composition $\kappa \circ \iota\colon J(k)/2J(k) 
\rightarrow L^*/{L^*}^2k^*$ is called the Cassels map.
\end{definition}

\begin{proposition}
The Cassels map $J(k)/2J(k) \rightarrow L^*/{L^*}^2k^*$ 
sends the class of the divisor $\big((x_1,y_1)\big)+\big((x_2,y_2)\big)-K_C$ on $C$ 
to $(X-x_1)(X-x_2)$. 
\end{proposition}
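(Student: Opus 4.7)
The plan is to compute $\kappa\big(\iota(D)\big)$ for $D = \big((x_1,y_1)\big) + \big((x_2,y_2)\big) - K_C$ by an explicit cocycle computation in diagram~\eqref{maindiagram}, and to compare it to the class of $(X-x_1)(X-x_2) \in L^*$ via Kummer theory. Since $J(\ksep)$ is divisible, there exists $E \in J(\ksep)$ with $2E = D$, and then $\iota(D)$ is represented by the cocycle $c\colon \sigma \mapsto \sigma(E) - E \in J[2](\ksep)$. To make $E$ concrete I would use the interpolation trick: choose $h(x) \in \ksep[x]$ of degree at most $2$ with $h(x_i) = y_i$, so that the function $y - h(x)$ on $C_{\ksep}$ has divisor $(P_1)+(P_2)+\sum_{j=1}^{4}(Q_j) - 3(\infty^+) - 3(\infty^-)$, where the $Q_j = (q_j,h(q_j))$ come from the four roots of $\big(f(x)-h(x)^2\big)/\big((x-x_1)(x-x_2)\big)$. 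This exhibits a divisor class whose double is $D$, and lets $\sigma(E)-E$ be read off from the Galois action on $\{Q_j\}$ and on the Weierstrass points.

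On the Kummer side, for each $\omega\in\Omega$ and each $i\in\{1,2\}$ I would choose a square root $b_{\omega,i}\in\ksep$ of $\omega - x_i$, and assemble them via $\varphi$ into elements $a_i \in \Lsep^*$ with $a_i^2 = X - x_i$. The Kummer cocycle $\sigma \mapsto \sigma(a_1 a_2)/(a_1 a_2)$ represents the class of $(X-x_1)(X-x_2)$ in $\H^1(\mu_2(\Lsep)) = L^*/(L^*)^2$. The identity $y_i^2 = f(x_i)$ gives $N\big((X-x_1)(X-x_2)\big) = (y_1 y_2/f_6)^2 \in (k^*)^2$, so by diagram~\eqref{maindiagram} this class descends to a well-defined element of $L^*/(L^*)^2 k^* \subseteq \H^1(\mu_2(\Lsep)/\mu_2(\ksep))$.

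The core step, and the principal obstacle, is to verify that the two resulting cocycles in $\H^1(\mu_2(\Lsep)/\mu_2(\ksep))$ coincide. For each $\omega$, the $\omega$-component of $\epsilon(\sigma(E) - E)$ records which Weierstrass pair labels the two-torsion point $\sigma(E)-E$, while the $\omega$-component of $\sigma(a_1 a_2)/(a_1 a_2)$ is the Galois sign acting on the product $b_{\omega,1} b_{\omega,2}$. Both patterns are governed by the Galois action on $\Omega$ and on the chosen square roots $\sqrt{\omega - x_i}$; a careful Galois-theoretic bookkeeping, or equivalently an appeal to the explicit formula for the map $\tau \colon \Pic^{(2)} C \to \H^1(J[2])$ of Poonen--Schaefer~\cite[\S 9]{posch} already used in the proof of Proposition~\ref{JinkerUp}, shows that the two patterns agree modulo the diagonal $\mu_2(\ksep)$.
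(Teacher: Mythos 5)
There is a genuine gap here, and the explicit construction at the heart of your plan is incorrect as stated. If $h$ has degree at most $2$ and merely interpolates $P_1=(x_1,y_1)$ and $P_2=(x_2,y_2)$, then $\operatorname{div}\big(y-h(x)\big)=(P_1)+(P_2)+\sum_{j=1}^4(Q_j)-3K_C$, which yields
$$
D \sim -\Big(\big[(Q_1)+(Q_2)-K_C\big]+\big[(Q_3)+(Q_4)-K_C\big]\Big),
$$
i.e.\ it writes $D$ (up to sign) as a \emph{sum} of two points of $J(\ksep)$, not as $2E$ for some $E\in J(\ksep)$. So your interpolation does not exhibit a divisor class whose double is $D$, and the cocycle $\sigma\mapsto\sigma(E)-E$ representing $\iota(D)$ is never actually made accessible. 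Halving on a genus-two Jacobian is genuinely harder: the sixteen halves of $D$ correspond to auxiliary choices of square roots (essentially the data $\sqrt{\omega-x_i}$ you introduce on the Kummer side), and producing one explicitly is of the same order of difficulty as the identity you are trying to prove.

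Independently of this, the step you yourself single out as the core --- that $\epsilon_*(\iota(D))$ equals the Kummer class of $(X-x_1)(X-x_2)$ in $\H^1\big(\mu_2(\Lsep)/\mu_2(\ksep)\big)$ --- is not carried out: ``careful Galois-theoretic bookkeeping'' is precisely the content of the proposition, and your fallback, an appeal to the map $\tau$ of~\cite[Section~9]{posch}, is in substance the paper's entire proof, which consists of the citation to~\cite[Proposition~2]{fps} and~\cite[Sections~5 and~9]{posch}. What those references supply is the nontrivial theorem that the connecting homomorphism of the $2$-descent sequence can be computed by evaluating a suitable function (the ``$X-T$'' map) on a good representative of the divisor class, together with the verification of its hypotheses for genus-two curves; nothing in your sketch reproduces that input or correctly reduces the problem to it. The one complete computation you do give, $N\big((X-x_1)(X-x_2)\big)=\big(y_1y_2/f_6\big)^2\in(k^*)^2$, only shows that the proposed image lies in the correct target group, not that it is the image of $D$ under $\kappa\circ\iota$.
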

\begin{proof}
See~\cite[Proposition~2]{fps} and~\cite[Sections~5 and~9]{posch}. 
\end{proof}

The kernel of the homomorphism $\epsilon_*$ appearing in 
Diagram~\eqref{maindiagram} equals the image of $\mu_2(k)$ in $\H^1(J[2])$ and 
thus has order $1$ or $2$. As this image of $\mu_2(k)$ is contained
in the image of $\beta_*$ (see Diagram~\eqref{maindiagram}), 
it is contained in $P^1(J[2])$, so we have $\ker \epsilon_* = \ker \kappa$.
The image of $-1 \in \mu_2(k)$ in $\H^1(J[2])$ is represented by any 
cocycle that sends $\sigma$ to $(\sigma(w_0))-(w_0)$ for 
some fixed $w_0 \in W$, see~\cite[Lemma~9.1]{posch}. There is a simple
condition based on how the polynomial $f$ factors that says whether or 
not this cocycle represents the trivial class~\cite[Lemma~11.2]{posch}
and thus whether or not $\ker \kappa$ is 
trivial. More subtle is the Cassels kernel, which is defined as the 
intersection $\ker \kappa \cap \Sel^2(J,k)$, cf.~Remark~\ref{selmer}.
The Cassels kernel measures the difference between the Selmer group
$\Sel^2(J,k)$ and its image under $\kappa$, which is known as the {\em
  fake Selmer group}. Michael Stoll~\cite[Section~5]{stoll} gives conditions
that tell whether or not the Cassels kernel is trivial.
Whether or not the kernel of the Cassels map, which injects through
$\iota$ into the Cassels kernel, is trivial is a question that is more
subtle yet again, cf.~\cite[Lemmas~6.4.1 and~6.5.1]{caf}.

We would like to get a better understanding of the elements of
$\Sel^2(J,k)$ and look more generally at the elements of $P^1(J[2])$.
To give a more concrete description of $P^1(J[2])$, we first give 
a description of $\H^1(M)$, which maps onto $P^1(J[2])$. 
Let $\Gamma$ denote the subgroup of $L^* \times k^*$ consisting of all
pairs $(\delta,n)$ satisfying $N_{L/k}(\delta) = n^2$, and let 
$\chi\colon \, L^* \rightarrow \Gamma$ be the 
homomorphism that sends $\varepsilon$ to $(\varepsilon^2,N(\varepsilon))$. 

\begin{proposition}\label{HoneM}
There is a unique isomorphism $\gamma \colon \Gamma/\im(\chi)
\rightarrow \H^1(\Mm)$ that sends the class of $(\delta,n)$ to the class of the cocycle
$\sigma \mapsto \sigma(\varepsilon)/\varepsilon$, where
$\varepsilon \in \Lsep$ is any element satisfying $\varepsilon^2 = \delta$ 
and $N(\varepsilon) = n$.  The composition of $\gamma$ with the map $\H^1(\Mm)\rightarrow 
L^*/(L^*)^2$ sends $(\delta,n)$ to $\delta$.
The kernel $\ker \epsilon_* = \ker \kappa$
is generated by the image of $(1,-1) \in \Gamma/\im(\chi)$
under the composition of $\gamma$ with the map 
$\beta_* \colon \H^1(\Mm) \rightarrow \H^1(J[2])$.
\end{proposition}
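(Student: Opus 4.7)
The plan is to realize $\gamma$ as the connecting homomorphism of a Kummer-type short exact sequence $1 \to \Mm \to \Lsep^* \to \tilde\Gamma \to 1$, where $\tilde\Gamma$ is the analogue of $\Gamma$ over $\ksep$, and then to trace through the $3\times 3$ diagram~\eqref{Jtwokernel} to identify $\beta_*\gamma(1,-1)$. I first define $\tilde\Gamma=\{(\delta,n)\in\Lsep^*\times\ksep^*:N(\delta)=n^2\}$ and $\tilde\chi\colon\Lsep^*\to\tilde\Gamma$ by $\varepsilon\mapsto(\varepsilon^2,N(\varepsilon))$. The kernel of $\tilde\chi$ is $\Mm$ by definition, and the real content is surjectivity. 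Since $\Lsep \isom \bigoplus_\omega \ksep$, I take a coordinatewise square root $\varepsilon'$ of $\delta$ and observe $N(\varepsilon')=\pm n$; if the sign is wrong, I multiply $\varepsilon'$ by a unit in $\mu_2(\Lsep)$ of norm $-1$, which exists because the norm $\mu_2(\Lsep)\to\mu_2(\ksep)$ is surjective.

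Taking Galois cohomology, the invariants are $L^*$ and $\Gamma$ respectively. Because $\Lsep^* \isom \bigoplus_\omega \ksep^*$ as a Galois module, Shapiro's lemma and classical Hilbert~90 applied factor by factor yield $\H^1(\Lsep^*)=0$. The long exact sequence therefore collapses to $L^* \xrightarrow{\chi} \Gamma \to \H^1(\Mm) \to 0$, which defines the desired isomorphism $\gamma$; uniqueness is automatic from its description on representatives. The standard formula for the connecting homomorphism gives $\gamma(\delta,n)=[\sigma\mapsto\sigma(\varepsilon)/\varepsilon]$ for any $\varepsilon\in\Lsep^*$ with $\tilde\chi(\varepsilon)=(\delta,n)$, which is exactly the claim. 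For the composition $\H^1(\Mm)\to L^*/(L^*)^2$, I compare the Kummer-type sequence above with the usual Kummer sequence $1\to\mu_2(\Lsep)\to\Lsep^*\xrightarrow{(\cdot)^2}\Lsep^*\to 1$ via the projection $(\delta,n)\mapsto\delta$ from $\tilde\Gamma$; naturality then identifies the image of $\gamma(\delta,n)$ with the Kummer class of $\delta$.

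For the final statement, I use that $\ker\epsilon_*$ equals the image of the connecting map $\mu_2(k)\to\H^1(J[2])$ from the bottom row of~\eqref{Jtwokernel}, and a standard $3\times 3$ chase shows this factors as $\beta_*$ composed with the connecting map $\partial\colon\mu_2(k)\to\H^1(\Mm)$ from the middle row $1\to\Mm\to\mu_2(\Lsep)\xrightarrow{N}\mu_2(\ksep)\to 1$. To identify this image in the description given by $\gamma$, I note that $\mu_2(\Lsep)\hookrightarrow\Lsep^*$ together with the map $\mu_2(\ksep)\to\tilde\Gamma$, $c\mapsto(1,c)$, defines a morphism from that middle row to the Kummer-type sequence above. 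Chasing $-1\in\mu_2(k)$ through the induced commutative diagram of connecting maps shows that $\partial(-1)=\gamma(1,-1)$: one lifts $-1$ to $\varepsilon\in\mu_2(\Lsep)$ with $N(\varepsilon)=-1$, and the cocycle $\sigma\mapsto\sigma(\varepsilon)/\varepsilon$ is simultaneously the boundary of $-1$ and a representative of $\gamma(1,-1)$. Hence $\ker\epsilon_*=\ker\kappa$ is generated by $\beta_*\gamma(1,-1)$.

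The principal obstacle is the final bookkeeping: the two short exact sequences involving $\Mm$ must be connected by a carefully chosen map of short exact sequences so that their connecting homomorphisms can be matched. Once this comparison diagram is in place, compatibility follows from naturality of the long exact sequence, while the surjectivity of $\tilde\chi$ and the vanishing of $\H^1(\Lsep^*)$ are routine consequences of the product structure of $\Lsep$.
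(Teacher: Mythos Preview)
Your proof is correct and follows essentially the same approach as the paper: realize $\gamma$ as the connecting map of the short exact sequence $1\to\Mm\to\Lsep^*\xrightarrow{\tilde\chi}\tilde\Gamma\to 1$, invoke Hilbert~90 for $\Lsep^*$, and compare with the usual Kummer sequence via the projection $(\delta,n)\mapsto\delta$ to identify the composition $\H^1(\Mm)\to L^*/(L^*)^2$. The only difference is in the final statement: the paper stays within the same comparison diagram and argues that $d$ maps $\ker p=\langle(1,-1)\rangle$ onto $\ker\big(\H^1(\Mm)\to L^*/(L^*)^2\big)$, which in turn maps onto $\ker\epsilon_*$ by Diagram~\eqref{maindiagram}; you instead build a second comparison, mapping the middle row $1\to\Mm\to\mu_2(\Lsep)\xrightarrow{N}\mu_2(\ksep)\to1$ of Diagram~\eqref{Jtwokernel} into the Kummer-type sequence via $\mu_2(\Lsep)\hookrightarrow\Lsep^*$ and $c\mapsto(1,c)$, and match connecting maps. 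Both routes are short and equivalent.
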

\begin{proof}
Let $\Gammasep$ denote the subgroup of $\Lsep^* \times \ksep^*$ consisting
of all pairs $(\delta,n)$ satisfying $N_{L/k}(\delta) = n^2$ and 
extend $\chi$ to a map from $\Lsep^*$ to $\Gammasep$ by $\chi(\varepsilon) = 
(\varepsilon^2,N(\varepsilon))$. Then $\chi$ is surjective and its 
kernel is $M$.
Let $p$ denote the projection map $p\colon\,\Gammasep 
\rightarrow \Lsep^*$. Taking Galois invariants in the diagram
$$
\xymatrix{
1\ar[r]&M \ar[r]\ar[d]&\Lsep^*\ar[r]^\chi\ar@{=}[d]&
   \Gammasep\ar[r]\ar[d]^{p} & 1 \cr
1\ar[r]&\mu_2(\Lsep)
\ar[r]&\Lsep^*\ar[r]^{x\mapsto x^2}&
   \Lsep^*
\ar[r] & 1 \cr
}
$$
we obtain the following diagram. 
$$
\xymatrix{
L^* \ar[r]^\chi\ar@{=}[d]&\Gamma 
\ar[r]^(.4)d\ar[d]^{p}&
  \H^1(M) \ar[r]\ar[d]&\H^1(\Lsep^*) \cr
L^* \ar[r]^{x\mapsto x^2}&L^* \ar[r]&
  \H^1(\mu_2(\Lsep)) \ar[r]&\H^1(\Lsep^*) \cr
}
$$
Let $d\colon \Gamma \rightarrow \H^1(M)$ be the 
connecting homomorphism in this diagram. It sends $(\delta,n)$ 
to the class represented by the cocycle $\sigma \mapsto \sigma( 
\varepsilon)/\varepsilon$ for any fixed $\varepsilon\in \Lsep^*$ 
with $\chi(\varepsilon) = (\delta,n)$, i.e., with $\varepsilon^2 = 
\delta$ and $N(\varepsilon) = n$. 
By a generalization of Hilbert's Theorem~90 we have 
$\H^1(\Lsep^*) = 1$ (see~\cite[Exercise~X.1.2]{locflds}). 
We conclude that $d$ is surjective and therefore
induces an isomorphism $\gamma\colon \, \Gamma/\im \chi \rightarrow \H^1(M)$. 
We also recover the isomorphism $\H^1(\mu_2(\Lsep)) \isom L^*/(L^*)^2$ 
that was used to get Diagram~\eqref{maindiagram}. From the commutativity 
of the diagram we conclude that the composition 
$$
\Gamma/\im \chi \rightarrow \H^1(M) \rightarrow \H^1(\mu_2(\Lsep^*))
\rightarrow L^*/(L^*)^2
$$
is induced by $p$, i.e., it is given by sending $(\delta,n)$ to $\delta$.
Chasing the arrows in the last diagram (all that is needed is the 
surjectivity of $d$ and the left-most vertical map), we find that
$d$ maps the kernel of $p$ surjectively to the kernel of the 
map $\H^1(M) \rightarrow \H^1(\mu_2(\Lsep^*))\isom L^*/{L^*}^2$, 
which maps surjectively to $\ker \epsilon_* = \ker \kappa$ by 
Diagram~\eqref{maindiagram}.  
The last statement of the proposition now follows from the fact that 
the kernel of $p$ is generated by $(1,-1)$.
\end{proof}

\begin{remark}
Note that for each $(\delta,n) \in \Gamma$ we can find  
an $\varepsilon \in \Lsep$ as in Proposition~\ref{HoneM} as follows.
For each $\omega \in \Omega$, choose an $\varepsilon_\omega\in \ksep$ 
with $\varepsilon_\omega^2 = \varphi_\omega(\delta)$. Then the element
$$
\varepsilon = \big(\varepsilon_\omega \big)_{\omega\in \Omega}
\in \bigoplus_{\omega \in \Omega} \ksep \isom \Lsep
$$
satisfies $\varepsilon^2 = \delta$ and $N(\varepsilon)= \prod_\omega
\varepsilon_\omega = \pm n$. 
By changing the sign of one of the $\varepsilon_\omega$ if necessary,
we obtain $N(\varepsilon) = n$. The cocycle in Proposition~\ref{HoneM}
can then also be written as 
$$
\sigma \mapsto \left(\frac{\sigma(\varepsilon_{\sigma^{-1}(\omega)})}
{\varepsilon_{\omega}}\right)_{\omega \in \Omega} \in \Mm \subset 
\bigoplus_{\omega \in \Omega} \mu_2(\ksep).
$$
Note also that by changing the sign of an even number of the 
$\varepsilon_\omega$, we change $\varepsilon$ by an element of $\Mm$, so 
we change the cocycle by a coboundary. 
\end{remark}

\begin{remark}
By~\cite[Section~6]{posch}, the group $\Mm$ is isomorphic to the 
two-torsion subgroup $J_{\mathfrak{m}}[2]$ of the so-called generalized 
Jacobian $J_{\mathfrak{m}}$. In the general setting and notation of~\cite{posch} it is possible to prove as in the proof of Proposition~\ref{HoneM} that 
$\H^1(J_{\mathfrak{m}}[\phi])$ is isomorphic to $\Gamma_p/\chi_p(L^*)$, 
where $\Gamma_p\subset L^*\times k^*$ consists of all pairs $(\delta,n)$ 
with $N(\delta) = n^p$ and $\chi_p \colon \, L^* \rightarrow  \Gamma_p$ 
sends $\varepsilon$ to $(\varepsilon^p,N(\varepsilon))$. 
\end{remark}

The following corollary provides the description 
of the group $P^1(J[2])$
that we shall use in Section~\ref{desingkummer}.

\begin{corollary}\label{Pone}
 The composition of $\gamma \colon \Gamma/\im(\chi)\rightarrow \H^1(\Mm)$
of Proposition~\ref{HoneM} with the map 
$\beta_* \colon\H^1(\Mm) \rightarrow \H^1(J[2])$ 
of Diagram~\ref{maindiagram} induces an isomorphism 
$\Gamma/(k^*\cdot\im(\chi)) \rightarrow P^1(J[2])$.  
\end{corollary}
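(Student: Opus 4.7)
The plan is to apply Galois cohomology to the short exact sequence forming the left-most column of Diagram~\eqref{Jtwokernel},
\[
1 \to \mu_2(\ksep) \to \Mm \xrightarrow{\beta} J[2](\ksep) \to 1,
\]
which yields the exact sequence
\[
k^*/(k^*)^2 \longrightarrow \H^1(\Mm) \xrightarrow{\beta_*} \H^1(J[2]) \xrightarrow{\Upsilon} \Br(k)[2].
\]
By definition $P^1(J[2]) = \ker \Upsilon$, so exactness gives $\im \beta_* = P^1(J[2])$. Composing with the isomorphism $\gamma$ of Proposition~\ref{HoneM}, I obtain a surjection $\beta_* \circ \gamma \colon \Gamma/\im(\chi) \twoheadrightarrow P^1(J[2])$. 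By the first isomorphism theorem, it then suffices to identify the kernel of this composition with the image of $k^* \cdot \im(\chi)$ in $\Gamma/\im(\chi)$, where $k^*$ is embedded in $\Gamma$ via $a \mapsto (a, a^3)$ (this lies in $\Gamma$ because $N_{L/k}(a) = a^6 = (a^3)^2$).

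The heart of the argument is a cocycle computation. Given $a \in k^*$, I would fix a square root $\sqrt{a} \in \ksep$ and set $\varepsilon = (\sqrt{a})_{\omega \in \Omega} \in \Lsep$. Then $\varepsilon^2 = a$ (via the diagonal embedding $k \hookrightarrow L$) and $N(\varepsilon) = (\sqrt{a})^6 = a^3$, so this $\varepsilon$ realizes the pair $(a, a^3)$ in the sense of Proposition~\ref{HoneM}. The associated cocycle $\sigma \mapsto \sigma(\varepsilon)/\varepsilon = (\sigma(\sqrt{a})/\sqrt{a})_\omega$ lies in the diagonal image of $\mu_2(\ksep)$ inside $\Mm$, and is precisely the image of the Kummer cocycle for $a$ under the connecting map $k^*/(k^*)^2 = \H^1(\mu_2(\ksep)) \to \H^1(\Mm)$. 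Thus $\gamma((a, a^3)) \in \ker \beta_*$, which proves one inclusion. Conversely, every class in $\ker \beta_*$ is the image of some $a \in k^*/(k^*)^2$, and hence equals $\gamma((a, a^3))$ by the computation above; since $\gamma$ is an isomorphism, this shows $\gamma^{-1}(\ker \beta_*) = k^* \cdot \im(\chi)/\im(\chi)$, finishing the proof.

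The only delicate point, and the main thing to get right, is the sign convention in the embedding $k^* \hookrightarrow \Gamma$. Choosing $\varepsilon_\omega = -\sqrt{a}$ for an odd number of $\omega$ would give $N(\varepsilon) = -a^3$ and thus realize the pair $(a, -a^3) = (a, a^3) \cdot (1, -1)$ rather than $(a, a^3)$; since $\gamma((1,-1))$ is the generator of $\ker \kappa$ from Proposition~\ref{HoneM}, which need not vanish under $\beta_*$ (for example when $\ker \kappa \neq 0$), one cannot enlarge the embedded subgroup to include $(1,-1)$ without breaking the identification. Fortunately, as observed in the Remark after Proposition~\ref{HoneM}, changing an even number of signs in $\varepsilon$ only alters the cocycle by a coboundary, so the embedding $a \mapsto (a, a^3)$ is well-defined on $\Gamma/\im(\chi)$, and the description of the kernel is unambiguous.
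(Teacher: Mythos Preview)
Your proposal is correct and follows essentially the same approach as the paper's (very brief) proof: use exactness of the left column of Diagram~\eqref{maindiagram} to identify $P^1(J[2])$ with $\H^1(\Mm)$ modulo the image of $k^*/(k^*)^2$, then transport this via the isomorphism $\gamma$ of Proposition~\ref{HoneM}. Your version is more detailed, in particular making explicit the embedding $k^* \hookrightarrow \Gamma$ via $a \mapsto (a, a^3)$ and the attendant sign issue, both of which the paper leaves implicit.
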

\begin{proof}
The kernel $P^1(J[2])$ of $\Upsilon$ is isomorphic to the 
image of $\H^1(M)$ in $\H^1(J[2])$, which is isomorphic to 
$\H^1(M)/k^*$. The statement now follows immediately from 
Proposition~\ref{HoneM}.
\end{proof}

We now interpret the elements of $\H^1(J[2])$ as certain 
twists of the Jacobian $J$. The remainder of this section 
is well known.

\begin{definition}
Let $K$ be any extension of $k$, and $X$ a variety over $k$;
a $K/k$-twist of $X$ is a variety $Y$ over $k$ such that 
there exists an isomorphism $Y_K \rightarrow X_K$. Two $K/k$-twists 
are isomorphic if they are isomorphic over $k$.
\end{definition}

\begin{proposition}\label{twists}
Let $K$ be a Galois extension of $k$ and let $X$ be a quasi-projective 
variety over $k$. 
There is a natural bijection between 
the set of isomorphism classes of $K/k$-twists of $X$ and
$\H^1(\Gal(K/k),\Aut(X_K))$ that sends a twist $A$ to the class of the
cocycle $\sigma \mapsto \varphi \circ \sigma(\varphi^{-1})$ for a
fixed choice of isomorphism $\varphi \colon A_{\ksep} \to X_{\ksep}$.
\end{proposition}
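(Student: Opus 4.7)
The plan is the standard Galois descent argument, which breaks naturally into constructing a forward map from twists to cocycles, checking that it descends to isomorphism classes, and then producing an inverse by effective descent.

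First I would define the forward map precisely. Given a $K/k$-twist $A$ with a $K$-isomorphism $\varphi \colon A_K \to X_K$, set $c_\sigma := \varphi \circ \sigma(\varphi^{-1}) \in \Aut(X_K)$, where the Galois action on morphisms is the one induced from the canonical semilinear actions on $A_K$ and $X_K$. A direct computation using $\sigma(\psi\circ\chi) = \sigma(\psi)\circ\sigma(\chi)$ gives $c_{\sigma\tau} = c_\sigma \circ \sigma(c_\tau)$, so $c$ is a $1$-cocycle. Two $K$-isomorphisms $\varphi, \varphi'$ differ by $\psi := \varphi' \circ \varphi^{-1} \in \Aut(X_K)$, and the associated cocycles differ by the coboundary of $\psi$; a $k$-isomorphism $A \to B$ of twists allows one to pick compatible $\varphi_A, \varphi_B$ producing identical cocycles. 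So the assignment descends to isomorphism classes of twists.

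Next I would construct the inverse by twisted descent. Given a cocycle $c$, define a new action of $\Gal(K/k)$ on $X_K$ by $\sigma \ast x := c_\sigma(\sigma(x))$; the cocycle identity is exactly what is needed to make $\ast$ a group action, and each $\sigma\ast$ is still semilinear over $\sigma\colon K \to K$. Effective Galois descent for quasi-projective varieties then yields a quasi-projective $k$-variety $A$ together with a $K$-isomorphism $\varphi \colon A_K \to X_K$ intertwining the canonical action on $A_K$ with $\ast$ on $X_K$; by construction the cocycle attached to $(A,\varphi)$ is $c$. Tracing through the definitions shows that the two assignments are mutually inverse.

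The main obstacle is the descent step, which is not purely formal: a cocycle always produces a descended sheaf, but representability by a $k$-scheme is not automatic. This is where the quasi-projectivity hypothesis enters, since it lets one cover $X_K$ by an $\Aut(X_K)$-stable family of affine opens -- for instance, intersections of pullbacks of affine charts under the automorphisms appearing in the image of $c$ -- and then apply faithfully flat descent for affine schemes on each piece, gluing via the cocycle. Once this standard representability result is granted, the rest of the proof is routine verification.
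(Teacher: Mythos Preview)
Your argument is correct and is precisely the standard Galois descent proof; the paper itself does not prove this statement but simply cites Serre, \emph{Galois cohomology}, Chapter~III, \S1, Proposition~5, where essentially the same argument appears. Your identification of the quasi-projectivity hypothesis as the ingredient that makes the descent step effective is exactly right, and your outline matches Serre's treatment.
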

\begin{proof}
See~\cite[Chapter~III, \S~1, Proposition~5]{ser}.
\end{proof}

We can embed $J[2](\ksep)$ into $\Aut(J_\ksep)$ by sending $P\in J[2](\ksep)$ 
to the automorphism $T_P$ that is translation by $P$. This induces a map 
$\H^1(J[2]) \rightarrow \H^1(\Aut(J_\ksep))$, through which every element in 
$\H^1(J[2])$ is associated to some $\ksep/k$-twist of $J$ by Proposition~\ref{twists}.
These particular twists carry the structure of a two-covering, defined below.

\begin{definition}\label{twocovering}
A two-covering of $J$ is a surface $A$ over $k$ together with a morphism 
$\pi \colon \, A \rightarrow J$ over $k$, such that there exists
an isomorphism $\rho\colon \, A_{\ksep} \rightarrow J_{\ksep}$ 
with 
$\pi = [2] \circ \rho$. In other words, $\rho$ makes the 
following diagram commutative.
$$
\xymatrix{
A_{\ksep} \ar[rd]_{\pi}\ar[r]^\isom_\rho & J_{\ksep} \ar[d]^{[2]} \\
&  J_{\ksep}
}
$$
An isomorphism $(A_1,\pi_1)\rightarrow (A_2,\pi_2)$ between two two-coverings 
is an isomorphism $h\colon A_1 \rightarrow A_2$ over $k$ with $\pi_1 = \pi_2 \circ h$.
\end{definition}

Although two $2$-coverings $(A_1,\pi_1)$ and $(A_2,\pi_2)$ may be non-isomorphic
while $A_1$ and $A_2$ are isomorphic as twists, 
we often just talk about a two-covering $A$ of $J$, 
regarding the covering map $\pi$ as implicit. For any two-torsion point 
$P \in J[2](\ksep)$, let $T_P$ denote the automorphism of $J$ given 
by translation by $P$. The following lemma shows that 
the isomorphism $\rho$ in Definition~\ref{twocovering} is 
well defined up to translation by a two-torsion point. 

\begin{lemma}\label{offby2torsion}
Let $(A,\pi)$ be a two-covering of $J$, and let $\rho,\rho'\colon A_{\ksep} \rightarrow 
J_{\ksep}$ be two isomorphisms satisfying $[2] \circ \rho = \pi = [2] \circ \rho'$. 
Then there is a unique point $P \in J[2](\ksep)$ such that $\rho' = T_P \circ \rho$. 
\end{lemma}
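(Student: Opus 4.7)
The plan is to transfer the problem from $A$ to $J$ itself by forming the composition $\psi = \rho' \circ \rho^{-1}$, which is an automorphism of $J_{\ksep}$ as a $\ksep$-variety (not a priori as a group). The hypothesis $[2]\circ \rho = [2]\circ \rho'$ immediately gives $[2]\circ \psi = [2]$; we want to show that $\psi$ is translation by a two-torsion point.

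The key step is to consider the morphism $\delta \colon J_{\ksep} \to J_{\ksep}$ defined on points by $Q \mapsto \psi(Q) - Q$, using the group law on $J_{\ksep}$. The identity $[2]\circ \psi = [2]$ translates into the pointwise statement $2\bigl(\psi(Q) - Q\bigr) = 0$, so $\delta$ factors set-theoretically, and in fact scheme-theoretically, through the closed subgroup scheme $J[2]_{\ksep} \hookrightarrow J_{\ksep}$. Since the characteristic of $k$ is different from $2$, the group scheme $J[2]_{\ksep}$ is finite and étale over $\ksep$, hence a disjoint union of reduced closed points. On the other hand $J_{\ksep}$ is connected (even geometrically irreducible), so any morphism from $J_{\ksep}$ to such a finite étale scheme is constant. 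Therefore $\delta$ is a constant morphism with value some $P \in J[2](\ksep)$, i.e.\ $\psi(Q) = Q + P = T_P(Q)$ for every $Q$, and so $\rho' = T_P \circ \rho$.

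For uniqueness, if $T_P \circ \rho = T_{P'} \circ \rho$ then $T_P = T_{P'}$ as automorphisms of $J_{\ksep}$; evaluating at the identity of $J_{\ksep}$ gives $P = T_P(0) = T_{P'}(0) = P'$. In fact the same evaluation pins down the desired two-torsion point explicitly: $P = \rho'\bigl(\rho^{-1}(0_J)\bigr)$.

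The only point requiring a little care is the step asserting that $\delta$ factors through $J[2]_{\ksep}$ as a morphism of schemes rather than just on closed points; this is automatic once one writes $\delta$ as the difference $\psi - \mathrm{id}$ of morphisms into the group $J_{\ksep}$ and notes that $[2]\circ \delta = 0$, so the image of $\delta$ lies in the scheme-theoretic kernel of $[2]$. Beyond that the argument is purely formal, and the essential input — that a connected variety admits no non-constant morphism to a finite étale scheme — is standard.
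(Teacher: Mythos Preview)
Your proof is correct and follows essentially the same approach as the paper: form the difference map (the paper uses $\tau(R)=\rho'(R)-\rho(R)$ on $A_{\ksep}$, you use $\delta(Q)=\psi(Q)-Q$ on $J_{\ksep}$ after composing with $\rho^{-1}$), observe it lands in $J[2]$, and conclude it is constant by connectedness. The only difference is cosmetic---you transfer to $J$ first and are a bit more careful about the scheme-theoretic factorization through $J[2]$, while the paper argues directly on $A$ with the phrase ``$J[2](\ksep)$ is discrete, $\tau$ is continuous, and $A_{\ksep}$ is irreducible.''
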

\begin{proof}
Define a map $\tau \colon A_{\ksep} \rightarrow J_{\ksep}$ by $\tau(R) = \rho'(R)-\rho(R)$. 
Then for each $R \in A(\ksep)$ we have $2\tau(R) = 2\rho'(R) - 2\rho(R) = \pi(R)-\pi(R)=0$, 
so $\tau(R) \in J[2](\ksep)$. Since $J[2](\ksep)$ is discrete, $\tau$ is continuous, and 
$A_{\ksep}$ is irreducible, we find that $\tau$ is constant, say $\tau(R) = P$ for some 
fixed $P$. Then $\rho' = T_P \circ \rho$. The point $P$ is unique, because if 
$\rho' = T_S \circ \rho$ for some point $S$, then $S = \tau(R) $ for all $R \in A(\ksep)$. 
\end{proof}

\begin{lemma}\label{twocovHone}
Let $A$ be a two-covering of $J$ and choose 
an isomorphism $\rho$ as in Definition~\ref{twocovering}. Then for 
each Galois automorphism $\sigma \in \Gal(\ksep/k)$ there is a unique 
point $P \in J[2](\ksep)$ satisfying $\rho \circ \sigma(\rho^{-1}) = T_P$. 
The map $\sigma \mapsto P$ induces a well-defined cocycle class $\tau_A$ 
in $\H^1(J[2])$ that does not depend on the choice of $\rho$. 
The map that sends a two-covering $B$ to $\tau_B$ yields a bijection 
between the set of isomorphism classes of two-coverings of $J$ and the
set $\H^1(J[2])$. 
\end{lemma}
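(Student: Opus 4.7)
The plan is to establish, in sequence: (i) for each $\sigma$ the existence and uniqueness of the two-torsion point $P$; (ii) the cocycle relation; (iii) independence of the cohomology class on the choice of $\rho$; (iv) independence on the isomorphism class of $(A,\pi)$; and (v) bijectivity with $\H^1(J[2])$. The common thread is the observation, essentially contained in Lemma~\ref{offby2torsion} applied with $\pi=[2]$, that an automorphism $\alpha$ of $J_\ksep$ satisfies $[2]\circ\alpha=[2]$ if and only if $\alpha=T_Q$ for some $Q\in J[2](\ksep)$.

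For (i), since $\pi$ is defined over $k$, both $\rho$ and $\sigma(\rho)$ satisfy $[2]\circ(-)=\pi$, so Lemma~\ref{offby2torsion} supplies a unique $P=P_\sigma\in J[2](\ksep)$ with $\rho\circ\sigma(\rho^{-1})=T_P$. For (ii), one computes
$T_{P_{\sigma\tau}}=\rho\circ(\sigma\tau)(\rho^{-1})=\bigl(\rho\circ\sigma(\rho^{-1})\bigr)\circ\sigma\bigl(\rho\circ\tau(\rho^{-1})\bigr)=T_{P_\sigma}\circ T_{\sigma(P_\tau)}$,
giving $P_{\sigma\tau}=P_\sigma+\sigma(P_\tau)$. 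For (iii), any other valid $\rho'$ equals $T_R\circ\rho$ for some $R\in J[2](\ksep)$ by Lemma~\ref{offby2torsion}, and a short calculation shows the new cocycle differs from the old by the coboundary $\sigma\mapsto R-\sigma(R)$. For (iv), given a $k$-isomorphism $h\colon A_1\to A_2$ of two-coverings and a choice $\rho_2$ for $A_2$, take $\rho_1:=\rho_2\circ h$; because $h$ is Galois-equivariant, the two cocycles agree literally, not just cohomologously.

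For (v) I would compare with Proposition~\ref{twists}. Surjectivity: given a cocycle $\sigma\mapsto P_\sigma$, Proposition~\ref{twists} produces a $\ksep/k$-twist $A$ of $J$ with an isomorphism $\rho\colon A_\ksep\to J_\ksep$ realizing the cocycle $T_{P_\sigma}$; then $[2]\circ\rho$ is Galois-invariant because $\sigma([2]\circ\rho)=[2]\circ T_{-P_\sigma}\circ\rho=[2]\circ\rho$, so it descends to a $k$-morphism $\pi\colon A\to J$ making $(A,\pi)$ a two-covering whose class is $[P_\sigma]$. Injectivity: if $(A,\pi_A)$ and $(B,\pi_B)$ yield cohomologous cocycles, modify $\rho_B$ by a suitable 2-torsion translation (via~(iii)) so that the two cocycles coincide on the nose; then $h:=\rho_B^{-1}\circ\rho_A$ is Galois-invariant, hence descends to a $k$-morphism $A\to B$, and it satisfies $\pi_B\circ h=\pi_A$ by construction.

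The main obstacle is bookkeeping: the cocycle convention in Definition~\ref{twocovering} (through $\rho\circ\sigma(\rho^{-1})$), that of Proposition~\ref{twists} (through $\varphi\circ\sigma(\varphi^{-1})$), and the sign conventions for translations must all be reconciled so that surjectivity in (v) reproduces exactly the cocycle we started with. Once these are aligned, the rest is a formal diagram chase using only Lemma~\ref{offby2torsion}, Proposition~\ref{twists}, and the identity $[2]\circ T_Q=[2]$ for $Q\in J[2](\ksep)$.
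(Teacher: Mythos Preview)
Your proposal is correct and follows essentially the same approach as the paper's proof: both use Lemma~\ref{offby2torsion} for the existence/uniqueness of $P_\sigma$ and for the independence on $\rho$, and both appeal to Proposition~\ref{twists} for surjectivity and to the Galois-invariance of $\rho_2^{-1}\circ\rho_1$ for injectivity. You are slightly more explicit in places (the cocycle identity in (ii), the check $\pi_B\circ h=\pi_A$ in (v), and the separate well-definedness step (iv)), but the argument is the same.
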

\begin{proof}
The unique existence of $P$ follows from Lemma~\ref{offby2torsion}
applied to $\rho' = \sigma(\rho)$. It is easily checked that for fixed
$\rho$ the map $\sigma \mapsto P$ is a cocycle. By Lemma~\ref{offby2torsion}, another choice for $\rho$ differs from $\rho$ by
composition with $T_P$ for some $P \in J[2](\ksep)$, so the
corresponding cocycle differs from the original one by a coboundary,
and the cocycle class 
$\tau_A$ is independent of $\rho$. Suppose $A_1$ and $A_2$ are
two-coverings of $J$ with the same corresponding cocycle class in 
$\H^1(J[2])$. For $i=1,2$, choose an isomorphism $\rho_i \colon 
(A_i)_{\ksep} \rightarrow J_{\ksep}$. Then the two cocycles 
$\sigma \mapsto \rho_1 \circ \sigma(\rho_1^{-1})$ and $\sigma \mapsto \rho_2 \circ \sigma(\rho_2^{-1})$ 
differ by a coboundary. 
After composing $\rho_2$ with $T_P$ for some $P \in J[2](\ksep)$, we
may assume this coboundary is trivial, so 
$\rho_1 \circ \sigma(\rho_1^{-1}) = \rho_2 \circ \sigma(\rho_2^{-1})$
for all $\sigma \in \Gal(\ksep/k)$. It follows that the isomorphism 
$\rho_2^{-1} \circ \rho_1$ is Galois invariant, so $A_1$ and $A_2$ are 
isomorphic over $k$. We deduce that the map $B \mapsto \tau_B$ is 
injective. For surjectivity, suppose $c\colon \Gal(\ksep/k) \to 
J[2](\ksep)$ is a cocycle. Composition with the map $J[2](\ksep) \to 
\Aut J(\ksep)$ gives a cocycle with values in $\Aut J(\ksep)$, which
corresponds by Proposition~\ref{twists} to a twist $A$ of $J$ in the
sense that there is an isomorphism $\varphi \colon A_\ksep \to
J_\ksep$ such that the cocycle $\sigma \mapsto \varphi \circ
\sigma(\varphi^{-1})$ equals $c$. It follows that $[2] \circ \varphi$ 
is defined over the ground field and makes $A$ into a two-covering 
that maps to the cocycle class of $c$. 
\end{proof}

\begin{proposition}\label{Atrivial}
Let $A$ be a two-covering of $J$ corresponding to the cocycle class
$\xi \in \H^1(J[2])$. Then $A$ contains a $k$-rational point if and 
only if $\xi$ is in the image of $\iota \colon J(k)/2J(k) 
\rightarrow \H^1(J[2])$. 
\end{proposition}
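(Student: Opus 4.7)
The plan is to argue directly with cocycles, using the flexibility given by Lemma~\ref{offby2torsion} to match the cocycle representing $\tau_A = \xi$ with the one representing the image of a suitable class under $\iota$. Fix an isomorphism $\rho\colon A_\ksep \to J_\ksep$ with $[2]\circ \rho = \pi$, and write the cocycle class of $\xi$ as $\sigma \mapsto P_\sigma$, where $\rho \circ \sigma(\rho^{-1}) = T_{P_\sigma}$. Recall that $\iota$ sends the class of $D \in J(k)$ to the cocycle $\sigma \mapsto \sigma(Q)-Q$ (or $Q-\sigma(Q)$, depending on sign convention), where $Q\in J(\ksep)$ is any point with $2Q = D$.

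For the forward implication, suppose $R \in A(k)$ and set $Q = \rho(R) \in J(\ksep)$. Then $\pi(R) = [2]\rho(R) = 2Q$; since $\pi$ is defined over $k$ and $R$ is $k$-rational, $2Q$ lies in $J(k)$. Applying the identity $\rho \circ \sigma(\rho^{-1}) = T_{P_\sigma}$ to the point $\sigma(Q)$ and using $\sigma(\rho^{-1})(\sigma(Q)) = \sigma(\rho^{-1}(Q)) = \sigma(R) = R$, we obtain $Q = \rho(R) = \sigma(Q) + P_\sigma$, so $P_\sigma = Q - \sigma(Q)$. Thus $\xi$ is represented by the cocycle associated to $2Q \in J(k)$ under $\iota$, and hence $\xi \in \im\iota$.

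For the converse, suppose $\xi = \iota(a)$ for some $a \in J(k)/2J(k)$. Lift $a$ to an element $D \in J(k)$ and choose $Q \in J(\ksep)$ with $2Q = D$, so that $\xi$ is represented by the cocycle $\sigma \mapsto Q - \sigma(Q)$. The cocycle $\sigma \mapsto P_\sigma$ attached to our chosen $\rho$ differs from this one by a coboundary, i.e.\ by $\sigma \mapsto \sigma(P) - P$ for some $P \in J[2](\ksep)$. Replacing $\rho$ by $T_{-P}\circ \rho$, which is still an isomorphism satisfying $[2]\circ(T_{-P}\circ\rho) = \pi$ by Lemma~\ref{offby2torsion}, we may assume $P_\sigma = Q - \sigma(Q)$ for every $\sigma$. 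Now set $R = \rho^{-1}(Q)$. Reversing the computation above, the equation $P_\sigma = Q - \sigma(Q)$ gives $\rho(\sigma(R)) = \sigma(Q) + P_\sigma = Q = \rho(R)$ for every $\sigma \in \Gal(\ksep/k)$, so $\sigma(R) = R$, and $R$ is a $k$-rational point of $A$.

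The only genuinely delicate point is checking that the sign conventions in $\iota$ and in the definition of $\tau_A$ are compatible; once they are aligned (or else the whole argument is carried out up to inversion in the abelian group $H^1(J[2])$, which does not affect membership in $\im\iota$), both directions are straightforward torsor computations. The mild technical obstacle is ensuring that adjusting $\rho$ by $T_{-P}$ really does change the cocycle by exactly the required coboundary, which follows from the identity $T_{-P}\circ \sigma(T_P) = T_{\sigma(P) - P}$ together with Lemma~\ref{offby2torsion}.
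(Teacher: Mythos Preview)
Your proof is correct and follows essentially the same route as the paper: both directions are direct cocycle computations, pulling back a carefully chosen point of $J$ along the isomorphism $\rho$. The only cosmetic difference is that in the implication $\xi\in\im\iota\Rightarrow A(k)\neq\emptyset$ you stay inside $\H^1(J[2])$ and adjust $\rho$ by a translation $T_{-P}$ via Lemma~\ref{offby2torsion}, whereas the paper passes to $\H^1(\Aut J_{\ksep})$ and conjugates by a general automorphism $m$; your version is marginally cleaner. Your closing worry about sign conventions is harmless: since $\sigma(Q)-Q\in J[2]$ one has $Q-\sigma(Q)=\sigma(Q)-Q$, so the two candidate cocycles for $\iota$ literally coincide.
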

\begin{proof}
The inclusion $T\colon J[2](\ksep) \rightarrow \Aut J_{\ksep}$ that sends 
$P\in J[2](\ksep)$ to $T_P$ induces
a map $T_* \colon \H^1(J[2]) \rightarrow \H^1(\Aut J_{\ksep})$. 
Set $\eta = T(\xi)$. 
Suppose $g \colon A_{\ksep} \rightarrow J_{\ksep}$ is an isomorphism that 
gives $A$ 
its two-covering structure, so that the composition $[2]\circ g$
is defined over $k$. Then $\eta$ is the class of the 
cocycle $\psi \in Z^1(\Aut J_{\ksep})$ given by 
$\psi(\sigma) = g \circ \sigma(g)^{-1}$. 

Suppose there is a point $P\in J(k)$ such that 
$\xi=\iota(\overline{P})$ where $\overline{P}$ is the image of $P$
in $J(k)/2J(k)$. Then $\eta$ is also the class of the cocycle $\varphi
\in Z^1(\Aut J_{\ksep})$ given by 
$\varphi(\sigma) = T_{\sigma(Q) - Q}$ for any fixed $Q$ with $2Q=P$. 
This implies that $\varphi$ 
and $\psi$ are cohomologous, so there is an automorphism $m \in \Aut 
J_{\ksep}$ such that $\varphi(\sigma) = m \circ \psi(\sigma) \circ
\sigma(m)^{-1}$ for all $\sigma \in \Gal(\ksep/k)$. Choose such an $m$ and 
set $h = m \circ g$ and $R=h^{-1}(-Q) \in A$. 
Then for all $\sigma \in \Gal(\ksep/k)$ we have
$h \circ \sigma(h)^{-1} = T_{\sigma(Q)-Q}$, so
\begin{align*}
\sigma(R) &= \sigma(h^{-1}(-Q)) = \sigma(h)^{-1}(-\sigma(Q)) = 
 (h^{-1} \circ h \circ \sigma(h)^{-1})(-\sigma(Q)) = \\
 &= (h^{-1} \circ T_{\sigma(Q)-Q})(-\sigma(Q)) = h^{-1}(-Q) = R.
\end{align*}
We conclude that $R$ is $k$-rational. 

Conversely, suppose that $A$ contains a $k$-rational point, say 
$R \in A(k)$. Set $Q=-g(R)$ and $P=2Q$. Take any $\sigma\in 
\Gal(\ksep/k)$. Then by Lemma~\ref{twocovHone} there is a point
$S\in J[2](\ksep)$ such that $\psi(\sigma) = g \circ \sigma(g)^{-1}
 = T_S$. We get 
\begin{align*}
S-\sigma(Q) &= T_S(-\sigma(Q)) = g((\sigma(g)^{-1})(-\sigma(Q)) \\
 &= g(\sigma(g^{-1}(-Q))) = g(\sigma(R)) = g(R) = -Q,
\end{align*}
so $S = \sigma(Q) - Q$. From $0=2S=2\sigma(Q)-2Q = \sigma(P)-P$ 
we find that $P$ is fixed by $\sigma$. As this holds for all choices
of $\sigma$ we find that $P$ is $k$-rational. Its image 
$\iota(\overline{P})$ is the class represented by the cocycle
that sends $\sigma$ to $T_{\sigma(Q) - Q}$, which by the above 
equals $\xi$.
\end{proof}

\begin{remark}\label{selmercov}
Let $k$ be a global field. The Selmer group $\Sel^2(J,k) \subset
\H^1(J[2](\ksep))$ consists of those elements of $\H^1(J[2](\ksep))$
that restrict to elements in the image of 
$\iota_v \colon J(k_v)/2J(k_v) \to \H^1(k_v,J[2](k_v^{\rm s}))$ for every
place $v$ of $k$, see Remark~\ref{selmer}. By Proposition~\ref{Atrivial} these elements correspond under the map of 
Lemma~\ref{twocovHone} 
to those
two-coverings of $J$ that have a point locally everywhere. 
Again by Proposition~\ref{Atrivial}, an element
of $\Sel^2(J,k)$ maps to zero in the Tate-Shafarevich group if and
only if the corresponding two-covering contains a rational point. 
\end{remark}

Although we do not need it in this paper, it is worth 
noting that two-covers of $J$ are not just twists of $J$, but 
can in fact be given the structure of a $k$-torsor under $J$. 
This implies that if a two-covering of $J$ has a rational point, then 
it is in fact isomorphic to $J$ over $k$. 
The following proposition (see~\cite[Proposition~3.3.2 (ii)]{skoro}
for the proof) tells us how to give a two-covering the
structure of a $k$-torsor under $J$.

\begin{proposition}
Let $(A,\pi)$ be a two-covering of $J$, and let $\rho \colon A_{\ksep} 
\rightarrow J_{\ksep}$ be an isomorphism satisfying $[2] \circ \rho = \pi$. 
Then there exists a unique morphism $\tau \colon J \times A \rightarrow A$ 
given by $\tau(R,a) = \rho^{-1}(R+\rho(a))$, which is independent 
of the choice of $\rho$, and which gives $A$ the structure of a 
$k$-torsor under $J$. 
\end{proposition}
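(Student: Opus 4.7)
The plan is to verify the three assertions — independence of $\rho$, descent to a $k$-morphism, and the torsor axioms — by straightforward calculation with the formula $\tau(R,a) = \rho^{-1}(R + \rho(a))$, leveraging Lemma~\ref{offby2torsion} as the key tool.

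First I would check independence of the choice of $\rho$. By Lemma~\ref{offby2torsion}, any other isomorphism $\rho' \colon A_\ksep \to J_\ksep$ with $[2]\circ\rho' = \pi$ has the form $\rho' = T_P \circ \rho$ for a unique $P \in J[2](\ksep)$. Then
\[
\rho'^{-1}\big(R+\rho'(a)\big) = \rho^{-1}\circ T_{-P}\big(R + P + \rho(a)\big) = \rho^{-1}\big(R+\rho(a)\big),
\]
since translation by $P$ is a group homomorphism shift that cancels. Thus $\tau$ does not depend on $\rho$.

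Next I would show $\tau$ is defined over $k$. Since $\pi$ is a $k$-morphism, for any $\sigma \in \Gal(\ksep/k)$ the map $\sigma(\rho)$ still satisfies $[2]\circ \sigma(\rho) = \sigma(\pi) = \pi$, so $\sigma(\rho)$ is another valid choice. By the independence just established, the morphism $\tau$ built from $\sigma(\rho)$ coincides with the one built from $\rho$; but the morphism built from $\sigma(\rho)$ is precisely $\sigma(\tau)$. Hence $\sigma(\tau) = \tau$ for every $\sigma$, so $\tau$ descends to a $k$-morphism $J \times A \to A$. This is the step I expect to carry the real content of the argument, although once independence is in hand it becomes essentially automatic.

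Finally I would verify the torsor axioms over $\ksep$ (which suffices since everything is defined over $k$). Identity is immediate: $\tau(0,a) = \rho^{-1}(\rho(a)) = a$. Associativity is the computation
\[
\tau(R,\tau(S,a)) = \rho^{-1}\big(R + \rho(\rho^{-1}(S+\rho(a)))\big) = \rho^{-1}\big((R+S)+\rho(a)\big) = \tau(R+S,a).
\]
For simple transitivity, given $a,a' \in A(\ksep)$, the equation $\tau(R,a) = a'$ is equivalent to $R = \rho(a') - \rho(a)$, which has a unique solution in $J(\ksep)$ because $\rho$ is a bijection. This produces the isomorphism $J \times A \to A \times A$, $(R,a) \mapsto (\tau(R,a),a)$, establishing the torsor structure.
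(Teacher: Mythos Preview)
Your proof is correct. Each step --- independence of $\rho$ via Lemma~\ref{offby2torsion}, Galois descent of $\tau$ from that independence, and the direct verification of the torsor axioms --- goes through as written.

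The paper itself does not give a proof of this proposition; it simply cites \cite[Proposition~3.3.2 (ii)]{skoro}. So your argument is not a reproduction of the paper's approach but rather a self-contained elementary proof that avoids the external reference. What you gain is that the reader sees exactly why the torsor structure is canonical and $k$-rational, using only the earlier Lemma~\ref{offby2torsion} already established in the paper; what the citation buys is brevity and a pointer to the general context of torsors under abelian varieties.
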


As mentioned in the introduction, our goal is to give an explicit 
model in $\P^{15}$ of the two-coverings of $J$ corresponding to
elements of $P^1(J[2])$, as defined just after Proposition~\ref{JinkerUp}. In particular this includes the two-coverings 
corresponding to elements of $\Sel^2(J,k)$, see Remarks~\ref{selmer} and~\ref{selmercov}.

\section{Models of the Jacobian and its Kummer surface}\label{modelssection}

We continue to use the notation of Section~\ref{sectionsetup}.
Let $[-1]$ denote the automorphism of $J$ given by multiplication by $-1$.
The {\em Kummer surface $\X$ of $J$} 
is defined to be the quotient $J/\langle [-1] \rangle$. 
It has $16$ singularities, all ordinary double points coming from the 
fixed points of $[-1]$, i.e., the two-torsion points of $J$. Let $\Y$ 
be the blow-up of $\X$ in these singular points. Then $\Y$ is a smooth 
K3 surface, which we call the {\em desingularized Kummer surface of $J$} 
to distinguish it from the singular Kummer surface $\X$ of $J$. In many 
places in the literature, $\Y$ is also referred to as the Kummer surface
of $J$. We denote the $(-2)$-curve on $\Y$ above the 
singular point of $\X$ corresponding to $P\in J[2]$ by $E_P$. 
Let $J'$ be the blow-up of $J$ in its two-torsion points. 
We denote the $(-1)$-curve on $J'$ above the point $P\in J[2]$ by $F_P$. 
The involution $[-1]$ on $J$ lifts to an involution on $J'$ such that the 
quotient is isomorphic to $\Y$. In other words, 
there is a morphism $J'\rightarrow \Y$, with ramification divisor 
$\sum_{P \in J[2]} F_P$,
that makes the following diagram commutative, 
cf.~\cite[Diagram~(2.2)]{katsura}.
$$
\xymatrix{
J' \ar[d] \ar[r] & J \ar[d] \cr
\Y \ar[r] & \X \cr
}
$$
Let $K_C$ be the canonical divisor of $C$ that is supported at the 
points at infinity, i.e., $K_C=(\infty^+) +(\infty^-)$, where 
$\infty^+$ and $\infty^-$ are the two points at infinity,
which may not be defined over the ground field individually. 
We let $\iota_h$ denote the hyperelliptic involution 
on $C$ that sends $(x_0,y_0)$ to $(x_0,-y_0)$. We have $\iota_h(\infty^{\pm}) = 
\infty^{\mp}$. For any point $Q$ on $C$ the divisor $(Q)+(\iota_h(Q))$ 
is linearly equivalent to $K_C$. 

The map $p\colon \,C\times C \rightarrow J$ that sends 
$(P,Q)$ to the divisor $(P) +(Q)-K_C$ factors through the symmetric 
square $C^{(2)}$ of $C$. The induced map $C^{(2)} \rightarrow J$ is 
birational (see~\cite[Theorem~VII.5.1]{milnejac}). In fact, it describes 
$C^{(2)}$ as the blow-up of $J$ at the origin $\O$ of $J$; the inverse 
image of $\O$ is the curve on $C^{(2)}$ that consists of all (unordered) pairs 
$\{Q,\iota_h(Q)\}$.  We may therefore identify the 
function field $k(J)$ of $J$ with that of $C^{(2)}$, which 
consists of the 
functions in the function field 
$$
k(C\times C) = k(x_1,x_2)[y_1,y_2]\big/\big(y_1^2 - f(x_1), y_2^2 - f(x_2) \big)
$$
of $C \times C$ invariant under the exchange of the indices.  As for any points $P,Q$ on $C$ the divisor $(P) + (Q)-K_C$ is linearly 
equivalent to $-(\iota_h(P)+\iota_h(Q)-K_C)$, it follows 
that $[-1]$ on $J$ is induced through $p$ by the involution $\iota_h$.
Therefore the induced automorphism $[-1]^*$ of $k(J)$ fixes 
$x_1$ and $x_2$ and changes the sign of $y_1$ and $y_2$. For any function $g \in k(J)$ 
we say that $g$ is {\em even} or {\em odd} if we have 
$[-1]^*(g) =g$ or $[-1]^*(g) =-g$ respectively. 

For any Weierstrass point $w \in W$ of $C$ we define $\Theta_w$ to be the 
divisor on $J$ that is the image under $p$ of the divisor $C \times \{w\}$ 
on $C\times C$. It consists of all divisor classes represented by 
$(P)-(w)$ for some point $P$ on $C$. The doubles of these so-called 
theta-divisors are all linearly equivalent. By abuse of notation, we will 
write $2n\Theta$ for the divisor class of $2n\Theta_w$ for any integer $n$ 
and any Weierstrass point $w$. Although $\Theta$ itself is not a well-defined 
divisor class modulo linear equivalence, it is well defined modulo 
numerical equivalence.  
We have $\Theta^2=2$ (in general, on a Jacobian of dimension $g$ we have 
$\Theta^g = g!$, see~\cite[Section~1]{MI}).  
Also, we have $h^0(n\Theta_w) = n^2$ for any integer $n > 0$ and any $w \in W$; 
the linear systems $|2\Theta_w|$, $|3\Theta_w|$, and 
$|4\Theta_w|$ determine morphisms of $J$ to $\P^3$, $\P^8$, and $\P^{15}$
respectively.

\begin{proposition}\label{embedding}
Suppose $w\in W$ is a Weierstrass point defined over $k$.
The linear system $|2\Theta_w|$ induces a morphism of $J$ to $\P^3_k$ that is 
the composition of the quotient map $J\rightarrow \X$ and a closed embedding
of $\X$ into $\P^3_k$. 
The linear systems $|3\Theta_w|$ and $|4\Theta_w|$ induce closed embeddings 
of $J$ into $\P^8_k$ and $\P^{15}_k$ respectively.
\end{proposition}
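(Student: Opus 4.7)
The plan is to reduce this statement to three standard results: the Riemann--Roch formula for abelian varieties (giving the target dimensions), Lefschetz's embedding theorem (for $n=3,4$), and the classical Kummer embedding (for $n=2$).

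First, since $w$ is a Weierstrass point, the divisor $\Theta_w$ is $[-1]$-symmetric: for any $P\in C$ we have $(P)+(\iota_h(P))\sim K_C\sim 2(w)$, so the class of $(P)-(w)$ equals the negative of the class of $(\iota_h(P))-(w)$, and thus $[-1]^*\Theta_w=\Theta_w$ as divisors. Combined with the formulas $\Theta_w^2=2$ and $h^0(n\Theta_w)=n^2$ already recalled before the proposition, this confirms that the three target projective spaces have the claimed dimensions $3$, $8$, $15$.

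For $n=3$ and $n=4$, the statement is immediate from Lefschetz's theorem: if $L$ is an ample line bundle on an abelian variety, then $L^{\otimes n}$ is very ample for every $n\geq 3$. Applying this to $L=\O(\Theta_w)$ gives closed embeddings $\phi_{|3\Theta_w|}\colon J\hookrightarrow\P^8_k$ and $\phi_{|4\Theta_w|}\colon J\hookrightarrow\P^{15}_k$, each induced by the complete linear system of the stated degree.

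The remaining case $n=2$ is more subtle. The symmetry of $\Theta_w$ equips $\O(2\Theta_w)$ with a canonical $[-1]$-equivariant structure under which the distinguished section vanishing along $2\Theta_w$ is invariant; a standard theta-function argument shows that in fact the whole space $H^0(J,\O(2\Theta_w))$ is $[-1]^*$-invariant for this choice. Consequently $\phi_{|2\Theta_w|}$ factors through the quotient $J\to\X$. The main obstacle is then to show that the induced morphism $\X\to\P^3_k$ is a closed embedding; this is the classical statement that the Kummer surface of a principally polarized abelian surface embeds in $\P^3$ as a quartic surface with sixteen nodes. I would either cite this (for instance from Birkenhake--Lange or Mumford's \emph{Abelian varieties}), or prove it directly by checking that the map is injective on closed points and that its differential is injective at every point of $\X$, using the geometry of the $16$ symmetric two-torsion translates of $\Theta_w$ to separate $\pm$-orbits and tangent vectors.
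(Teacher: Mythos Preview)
Your proposal is correct. The paper's own proof is simply a one-line citation to Mumford's \emph{On the equations defining abelian varieties~I}, Section~5, Case~d), so your sketch unpacks precisely the ingredients contained in that reference: Riemann--Roch for the dimensions, Lefschetz's theorem for $n\geq 3$, and the classical fact that $|2\Theta|$ on a principally polarized abelian surface with symmetric $\Theta$ factors through the Kummer quotient and embeds it as a quartic in $\P^3$.
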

\begin{proof}
See~\cite[Section~5, Case~d)]{MI}. 
\end{proof}

Unfortunately, in full generality we cannot use the linear system 
$|3\Theta_w|$ to give an explicit model of $J$ in $\P^8_k$, as this
system may not be defined over the ground field $k$.
If $C$ contains a rational Weierstrass point $w$, then $\Theta_w$ is defined 
over the ground field and a model of $J$ in $\P^8$ can be found by 
sending the rational Weierstrass point to infinity, thus reducing to 
the case that $C$ is given by an equation of the form $y^2=h(x)$ where 
$h$ is of degree $5$, see~\cite{grant}.  The explicit twisting we perform 
in Section~\ref{twistingsection} was done in~\cite{gordongrant} in the case 
that all Weierstrass points are defined over the ground field.

For any divisor $D$ on a variety $S$ over $k$, let $\sL(D)$ denote the 
$k$-vector space $\H^0(S,\O_S(D))$. 
Let $\Theta_{\pm}$ denote the divisor on $J$ that is the image under $p$ 
of the divisor $C\times\{\infty^{\pm}\}$ on $C \times C$. Then 
$\Theta_++\Theta_-$ is a rational divisor in $|2\Theta|$, so the maps 
induced by $|2\Theta|$ and $|4\Theta|$ can always be defined over the ground field.
The first author has given explicit bases for the vector spaces 
$\sL(\Theta_++\Theta_-)$ and $\sL(2(\Theta_++\Theta_-))$ in~\cite{flynn}. Set 
\begin{align}
k_1&=1, \qquad k_2 = x_1+x_2, \qquad k_3=x_1x_2, \nonumber \\[5pt]
k_4&=\frac{2f_0+f_1k_2+2f_2k_3+f_3k_2k_3+2f_4k_3^2+f_5k_2k_3^2+2f_6k_3^3-2y_1y_2}
          {(x_1-x_2)^2}, \nonumber \\[5pt]
k_{ij} &= k_{ji} = k_ik_j \quad (\mbox{for } 1\leq i, j \leq 4), \nonumber \\[5pt]
b_i &= \frac{x_2^{i-1}y_1-x_1^{i-1}y_2}{x_1-x_2} \quad (\mbox{for } 
                                               1\leq i \leq 4), \label{bidef} \\[5pt]
b_5&=\frac{1}{2f_6}\frac{G(x_1,x_2)y_1-G(x_2,x_1)y_2}{(x_1-x_2)^3}, \nonumber\\[5pt]
b_6&=-\frac{1}{4f_6}\big(f_1b_1+2f_2b_2+3f_3b_3+4f_4b_4+4f_5b_5-f_5k_3b_3+f_5k_2b_4-2f_6k_3b_4+2f_6k_2b_5\big), \nonumber
\end{align}
with 
$$
G(r,s)=4f_0+f_1(r+3s)+2f_2s(r+s)+f_3s^2(3r+s)+4f_4rs^3
+f_5s^4(5r-s)+2f_6rs^4(r+s). 
$$
Define the functions $a_0, a_1, \ldots, a_{15}$ by
\begin{equation}\label{victors}
\begin{array}{llll}
a_0=k_{44}, & a_1 = -f_1b_1-2\sum_{i=2}^6 f_ib_i,& a_2 = f_5b_4+2f_6b_5, \\[5pt]
a_3=k_{34},&a_4=\frac{1}{2}(k_{24}-f_1k_{11}-f_3k_{13}-f_5k_{33}),& a_{5}=k_{14},\\[5pt]
 a_6=b_4, &a_7=b_3, & a_8=b_2, \\[5pt]
a_9=b_1,& a_{10}=k_{33}, & a_{11}=k_{23}, \\[5pt]
a_{12}=k_{13}, & a_{13}=k_{12}, & a_{14} =k_{11},\\[5pt]
a_{15}=k_{22}-4k_{13}.
\end{array}
\end{equation}
The functions $a_0 , \ldots , a_{15}$ are the functions used
in~\cite[Sections~2.1--2]{caf} as $z_0, \ldots, z_{15}$.

\begin{proposition}\label{bases}
The sequence $(k_1,k_2,k_3,k_4)$ is a basis for $\sL(\Theta_++\Theta_-)$.
The sequences $(a_i)_{i=0}^{15}$ and $(k_{11},k_{12},\ldots,k_{44},b_1,\ldots,b_6)$ are bases for $\sL(2(\Theta_++\Theta_-))$.
\end{proposition}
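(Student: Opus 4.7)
The plan is to reduce the statement to a dimension count combined with verification of pole orders and linear independence. Since $\Theta_+ + \Theta_- \sim 2\Theta$ as divisor classes, the formula $h^0(n\Theta_w) = n^2$ recorded just before Proposition~\ref{embedding} yields
$$
\dim \sL(\Theta_++\Theta_-) = 4 \quad \text{and} \quad \dim \sL(2(\Theta_++\Theta_-)) = 16,
$$
which match the cardinalities of the proposed bases (the second one has $10 + 6 = 16$ elements). Thus in each case it suffices to show that the listed functions lie in the appropriate vector space and are linearly independent.

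For containment I would work on $C \times C$, identifying $k(J)$ with the $S_2$-invariant subfield of $k(C \times C)$. A symmetric rational function lies in $\sL(\Theta_++\Theta_-)$ precisely when its poles, as a function on $C \times C$, are confined to the four curves $\{\infty^\pm\} \times C$ and $C \times \{\infty^\pm\}$ with the prescribed multiplicities, and analogously for $\sL(2(\Theta_++\Theta_-))$. The functions $k_1, k_2, k_3$ and the products $k_{ij}$ satisfy this by direct inspection of pole orders at infinity. The delicate cases are $k_4$, $b_5$, and $b_6$, each of which carries a factor $(x_1-x_2)^n$ in the denominator that would appear to introduce poles along the diagonal. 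The key observation is that in each case the numerator has a definite symmetry or antisymmetry under the swap $(x_1, y_1) \leftrightarrow (x_2, y_2)$ and, after using $y_1^2 = f(x_1)$ and $y_2^2 = f(x_2)$, vanishes on the diagonal; iterated applications of the symmetry argument force the numerator to vanish to the required order, cancelling the denominator. For $b_1, \dots, b_4$ the analysis is analogous but easier.

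Linear independence I would establish by filtering each space by pole order along, say, $C \times \{\infty^+\}$ and reading off the leading coefficients of the proposed generators; the resulting matrix is essentially triangular. The two bases for $\sL(2(\Theta_++\Theta_-))$ are related by the visibly invertible linear change of coordinates displayed in~\eqref{victors}, so once one of them is known to consist of regular sections with the right pole orders, the other follows automatically.

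The main obstacle is the pole-order bookkeeping for $k_4, b_5, b_6$ (and, more routinely, for the other $b_i$): while conceptually elementary, these verifications are tedious. Rather than reproduce the calculations, I would simply cite~\cite{flynn}, where these bases were first constructed and checked in full detail.
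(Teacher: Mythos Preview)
Your proposal is correct and follows essentially the same approach as the paper: verify that the functions have poles only along $\Theta_+$ and $\Theta_-$ of the required order, establish linear independence, and conclude by the dimension count $h^0(n\Theta)=n^2$, outsourcing the routine but tedious verifications to the literature (you cite~\cite{flynn}, the paper cites~\cite{caf}). The only notable difference is in the linear-independence step for $(k_1,k_2,k_3,k_4)$: the paper observes that $k_1,k_2,k_3 \in k(x_1,x_2)$ while $k_4$ involves $y_1y_2 \notin k(x_1,x_2)$, which settles independence immediately without any pole-order filtration; your filtration argument is equally valid but less economical.
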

\begin{proof}
One checks that the functions $k_1,k_2,k_3,k_4$ are regular except for a pole of 
order at most one along $\Theta_+$ and $\Theta_-$, so they are contained in 
$\sL(\Theta_++\Theta_-)$. Since the function $y_1y_2$ is not contained 
in the subfield $k(x_1,x_2)$ of $k(J)$, it follows that 
these functions are linearly independent. As $\sL(\Theta_++\Theta_-)$
has dimension $4$, they indeed form a basis. 
A similar argument works for the vector space $\sL(2(\Theta_++\Theta_-))$. 
Alternatively, 
one checks that $a_0, \ldots, a_{15}$ are the functions 
defined in~\cite[Sections~2.1--2]{caf}, 
where it is proved that they indeed form a basis of 
$\sL(2(\Theta_++\Theta_-))$. From \eqref{victors} it then follows 
immediately that the sequence 
$(k_{11},k_{12},\ldots,k_{44},b_1,\ldots,b_6)$ 
is a basis of 
$\sL(2(\Theta_++\Theta_-))$ as well. 
\end{proof}

\begin{corollary}
The quotient map $J \rightarrow \X$ is given by 
$D \mapsto [k_1(D):k_2(D):k_3(D):k_4(D)]$ or 
$D \mapsto [k_{1i}(D):k_{2i}(D):k_{3i}(D):k_{4i}(D)]$ for 
any $1\leq i \leq 4$.
\end{corollary}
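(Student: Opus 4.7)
The plan is to recognize the rational map $\phi\colon J\dashrightarrow\P^3_k$ given by $D\mapsto [k_1(D):k_2(D):k_3(D):k_4(D)]$ as the Kummer quotient followed by a closed embedding, and then to deduce that multiplying through by $k_i$ produces the same rational map.

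First I would invoke Proposition~\ref{bases} to recognize $(k_1,k_2,k_3,k_4)$ as a $k$-basis of $\sL(\Theta_++\Theta_-)$. Since $\Theta_++\Theta_-$ lies in the divisor class $2\Theta$, the associated complete linear system agrees, as a projective space, with $|2\Theta_{w_0}|$ for any Weierstrass point $w_0$; such a $w_0$ always exists after base change to $\ksep$. Proposition~\ref{embedding} applied to $w_0$ then identifies $\phi_{\ksep}$ with the composition of the Kummer quotient $J_{\ksep}\to\X_{\ksep}$ and a closed embedding into $\P^3_{\ksep}$, up to a $\ksep$-linear automorphism of $\P^3$. In particular $\phi$ is a morphism, defined over $k$.

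Next I would descend the factorization through $\X$ to the ground field. Each $k_i$ belongs to the subfield $k(x_1,x_2)[y_1y_2]$ of $k(J)=k(C^{(2)})$, which is precisely the fixed field of $[-1]^*$ since $[-1]^*$ fixes $x_1,x_2$ and negates $y_1,y_2$. Hence each $k_i$ is even and descends to a rational function on $\X$, so $\phi$ factors as $J\to\X\hookrightarrow\P^3_k$ over $k$, where the second arrow is a closed embedding by the previous paragraph.

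Finally, for each $1\leq i\leq 4$ the four-tuple $(k_{1i}(D),k_{2i}(D),k_{3i}(D),k_{4i}(D))$ equals $k_i(D)\cdot(k_1(D),k_2(D),k_3(D),k_4(D))$, so it defines the same rational map to $\P^3_k$ as $\phi$ on the dense open where $k_i$ does not vanish, and therefore also represents the Kummer quotient. The only delicate point is the identification of the two linear systems in the first step: this rests on the linear equivalence $\Theta_++\Theta_-\sim 2\Theta_{w_0}$ together with the dimension count in Proposition~\ref{bases}, so strictly speaking one compares complete linear systems rather than specific vector spaces of functions, but this suffices since Proposition~\ref{embedding} only records the resulting morphism up to a linear change of coordinates on $\P^3$.
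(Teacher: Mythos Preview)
Your proposal is correct and follows the same approach as the paper, which simply states that the corollary follows immediately from Propositions~\ref{embedding} and~\ref{bases}. You have unpacked this: the careful handling of the base change to $\ksep$ (needed because Proposition~\ref{embedding} assumes a $k$-rational Weierstrass point) and the even-function argument for the factorization through $\X$ are exactly the details one would supply, and your observation $k_{ji}=k_ik_j$ for the second description is the obvious one.
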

\begin{proof}
This follows immediately from Propositions~\ref{embedding} and~\ref{bases}.
\end{proof}

For any $k$-vector space $V$ we denote the multiplication on the symmetric 
algebra $\Sym V = \bigoplus_{d=0}^\infty \Sym^d V$ by $(g,h) \mapsto g*h$ to 
avoid confusion with a possibly already existing product. In particular this 
implies that for every positive integer $d$ the natural quotient map 
$V^{\otimes d} \to \Sym^d V$ is given by $v_1\otimes \cdots \otimes v_d 
\mapsto v_1 * \cdots * v_d$.

\begin{remark}\label{symtwosubspace}
Under the natural map $\Sym^2 \sL(\Theta_++\Theta_-) \rightarrow 
\sL(2(\Theta_++\Theta_-))$ that sends $g*h$ to $gh$,
the element $k_i * k_j$ maps to $k_{ij}$. The fact that $\{k_{ij}\}_{i,j}$ 
is a linearly independent set is equivalent to the fact that this map 
is injective, which is in turn equivalent to the fact that there 
are no quadratic polynomials vanishing on the image of $\X$ in $\P^3$
embedded by $|2\Theta|$. For the rest of this paper we freely identify 
$\Sym^2 \sL(\Theta_++\Theta_-)$ with its image in
$\sL(2(\Theta_++\Theta_-))$.
\end{remark}

\begin{remark}\label{twouple}
Note that $(k_{ij})_{i,j}$ and $(a_0,a_3,a_4,a_5,a_{10},\ldots,a_{15})$ 
are bases of the $10$-dimensional space of even functions, while
$(b_i)_i$ and $(a_1,a_2,a_6,a_7,a_8,a_9)$ are bases of the 
$6$-dimensional space of odd functions.  
It follows from Propositions~\ref{embedding} and~\ref{bases}
that together they give an embedding of $J$ into $\P^{15}$. By definition 
of the $k_{ij}$, the projection of $\P^{15}$ onto the $10$ even coordinates 
factors as the map from $J$ to $\P^3$ given by $k_1,k_2,k_3,k_4$ and the 
$2$-uple embedding from $\P^3$ to $\P^9$. Again by Propositions~\ref{embedding} and~\ref{bases} it follows that this map from 
$J$ to $\P^9$ is the composition of the quotient map $J \rightarrow \X$ 
and a closed embedding of $\X$ into $\P^9$. 
\end{remark}

We now study the projection onto the odd coordinates; this 
gives the desingularization of $\X$.
By abuse of notation we also write $2n\Theta$ or $\Theta_{\pm}$ for the 
divisor class on $J'$ that is the pull-back of $2n\Theta$ or $\Theta_{\pm}$ 
on $J$ under the blow-up map 
$J' \rightarrow J$ for any integer $n$.

\begin{proposition} \label{ratquot}
There are direct sum decompositions 
$$ \begin{array}{rcr@{~ \oplus ~}l}
\sL \bigl( 2 (\Theta_++\Theta_-) \bigr) & = & 
{\langle\text{even coordinates}\rangle} 
& {\langle \text{odd coordinates}\rangle} 
 \\[5pt]
& = & \Sym^2 \bigl( \sL ( \Theta_++\Theta_-) \bigr) & 
\sL \bigl( 2 (\Theta_++\Theta_-) \bigr)(-J[2]) \\[5pt]
& \simeq & 
\H^0 \bigl( \X , \varphi^* \O_{\P^3}(2) \bigr) & 
\H^0 \bigl( J' , \mathcal{O}_{J'} \bigl( 2(\Theta_++\Theta_-) 
- \sum _P F_P \bigr) \bigr)
\end{array} $$
where $\sL \bigl( 2 (\Theta_++\Theta_-) \bigr)(-J[2])$ is the subspace of 
$\sL \bigl( 2 (\Theta_++\Theta_-) \bigr) $ of sections vanishing 
on the two-torsion points and $\varphi \colon \X \to \mathbb{P}^3$ 
is the embedding of $\X$ 
into $\P^3$ associated to $|\Theta_++\Theta_-|$.  
Furthermore, the projection of $J \subset \mathbb{P}^{15}$ 
away from the even coordinates 
determines a rational map 
\begin{eqnarray*}
J & \dashrightarrow & \mathbb{P}^5 \\
D & \longmapsto & [b_1(D) : \ldots : b_6(D)]
\end{eqnarray*}
which induces the morphism $J' \to \mathbb{P}^5$ associated 
to the linear system 
$|4\Theta - \sum_P F_P|$ on $J'$, and factors as the quotient 
map $J' \rightarrow \Y$ 
and a closed embedding $\Y \to \mathbb{P}^5$.
\end{proposition}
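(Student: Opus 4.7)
The plan is first to establish the direct-sum decomposition together with the cohomological identifications of its summands, and then to analyze the rational map defined by the odd coordinates.

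Since the divisor $2(\Theta_++\Theta_-)$ is $[-1]$-invariant, the involution $[-1]^*$ acts on $\sL(2(\Theta_++\Theta_-))$ and, as $\mathrm{char}\,k\neq 2$, splits it into even and odd eigenspaces. By Proposition~\ref{bases} together with the explicit formulas~\eqref{bidef} and~\eqref{victors}, the $k_{ij}$ are even and the $b_i$ are odd in $y_1,y_2$, so the even part is $10$-dimensional with basis $\{k_{ij}\}$ and the odd part is $6$-dimensional with basis $\{b_1,\ldots,b_6\}$. By Remark~\ref{symtwosubspace}, the even part coincides with the image of $\Sym^2\sL(\Theta_++\Theta_-)$. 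To identify the even part with $\H^0(\X,\varphi^*\O_{\P^3}(2))$, I use that $\X\subset\P^3$ is a quartic surface, so the restriction map $\H^0(\P^3,\O(2))\twoheadrightarrow \H^0(\X,\varphi^*\O(2))$ is surjective and both sides are $10$-dimensional. For the odd part, any odd section vanishes at the fixed locus of $[-1]$, giving the inclusion odd $\subseteq \sL(2(\Theta_++\Theta_-))(-J[2])$; for equality I would show that no nonzero even section vanishes at every point of $J[2]$, which under the identification from Remark~\ref{twouple} is the classical statement that the sixteen nodes of $\X\subset\P^3$ impose independent conditions on quadrics. Finally, under the blow-up $\beta\colon J'\to J$, vanishing at $P\in J[2]$ becomes vanishing along the exceptional curve $F_P$, identifying the odd part with $\H^0(J',\O_{J'}(2(\Theta_++\Theta_-)-\sum_P F_P))$.

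For the rational map $J\dashrightarrow\P^5$ defined by $[b_1:\cdots:b_6]$, the base locus contains $J[2]$; after pulling back to $J'$ each $b_i$ vanishes to order exactly one along $F_P$, so the map lifts to a morphism $J'\to\P^5$ given by the complete linear system $|4\Theta-\sum_P F_P|$ (using $2(\Theta_++\Theta_-)\sim 4\Theta$ and the identification in the previous paragraph). Because the $b_i$ are odd, this morphism is invariant in projective coordinates under the lifted involution on $J'$, and therefore factors through the quotient $\Y=J'/\langle[-1]\rangle$.

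The main obstacle is to show that the induced morphism $\Y\to\P^5$ is a closed embedding. I would identify a divisor class $D$ on $\Y$ with $\pi_\Y^*D \sim 2(\Theta_++\Theta_-)-\sum_P F_P$, where $\pi_\Y\colon J'\to\Y$ is the quotient; the projection formula together with $(\pi_\Y^*D)^2=16$ forces $D^2=8$, matching the degree of a smooth K3 surface embedded in $\P^5$ by a complete linear system of projective dimension $5$. For the very ampleness of $|D|$, I would either argue intersection-theoretically on $\Y$ (checking separation of points and of tangent vectors using the numerical properties of $D$ and of the $(-2)$-curves $E_P$) or compare directly with the classical closed embedding of the desingularized Kummer surface into $\P^5$ as an intersection of quadrics (see~\cite{flynn}); the $b_i$ then exhibit a Galois-invariant basis realising that classical embedding over the ground field, whence the map inherits the property of being a closed embedding.
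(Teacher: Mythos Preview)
Your argument is essentially correct and considerably more detailed than the paper's own proof, which disposes of the even/odd splitting in one line (characteristic $\neq 2$), handles the reverse inclusion $\sL(2(\Theta_++\Theta_-))(-J[2])\subseteq\langle\text{odd}\rangle$ by a bare citation of~\cite[Exercise~VIII.22.9]{beau}, and then declares that ``the second part of the proposition follows'' without further elaboration.

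The one genuine difference is in how you justify that every section vanishing on $J[2]$ is odd. The paper simply invokes Beauville's exercise, which for a principally polarized abelian surface identifies the odd part of $\H^0(2\Theta)$ with the subspace of sections vanishing at the $2$-torsion. You instead translate the question to $\P^3$: an even section corresponds to a quadric, and you need that no nonzero quadric contains all sixteen nodes of the Kummer quartic. This is a valid and classical route, though your phrase ``impose independent conditions on quadrics'' is imprecise, since sixteen points cannot impose sixteen independent conditions on the ten-dimensional space $\H^0(\P^3,\O(2))$; what you actually need (and what is true) is that the evaluation map at the sixteen nodes has trivial kernel. Your approach has the advantage of being concrete and checkable in coordinates; the citation buys brevity.

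For the closed embedding $\Y\hookrightarrow\P^5$, you correctly flag this as the main obstacle and sketch two approaches. The paper offers no argument here at all. Your self-intersection computation $D^2=8$ is right, but as you implicitly acknowledge, this alone does not force very ampleness on a K3 surface; one must also exclude curves with $D\cdot E\leq 0$ (e.g.\ via Saint-Donat's criterion). Your fallback of appealing to the classical model of the desingularized Kummer surface as an intersection of three quadrics is the cleanest way to close this gap, and is in the spirit of how the surrounding sections of the paper proceed.
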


\begin{proof}
The decomposition into even and odd coordinates is immediate, 
since the characteristic of the field $k$ is different from $2$.  
The vector space $\sL \bigl( 2 (\Theta_++\Theta_-) \bigr)(-J[2])$ 
contains all the odd functions; the reverse inclusion is a consequence 
of~\cite[Exercise~VIII.22.9, p.~104]{beau}.
This establishes the second decomposition. The third 
decomposition follows at once from the previous ones. The second part 
of the proposition follows.
\end{proof}

In the following diagram we summarize the maps that we described.  
$$
\xymatrix{
&&&&\P^{15}\ar@/_2cm/@{-->}[lllldddd]_{\rm odd}\ar@/^2cm/@{-->}
[ddddrrrr]^{\rm even}\\
\\
&&J'\ar[rr]\ar[dd]\ar[lldd]_(.3){|4\Theta-\sum F_P|}&&J\ar[dd]
\ar[uu]^{|4\Theta|}\ar[ddrr]^{|2\Theta|}\ar[rr]^{|3\Theta_w|}&&\P^8\\
\\
\P^5&&\Y\ar[ll]\ar[rr]&& \X\ar[rr] && \P^3 \ar[rr]_{\rm 2-uple} && \P^9\\
}
$$
The ideal of the image of $J$ in $\P^{15}$ is generated by $72$ quadrics (see~\cite{flynn}).
There are $21$ linearly independent quadrics in just the 
even functions, which define the image of $\X$ in $\P^9$. A $20$-dimensional 
subspace of the space generated by these quadrics is spanned by the 
equations of the form $k_{ij}k_{rs} = k_{ir}k_{js}$ for 
$1 \leq i,j,r,s \leq 4$, which define the image of $\P^3$ in $\P^9$ under 
the $2$-uple embedding.  From the quartic that defines the image of $\X$ 
in $\P^3$ we find another quadric in only the even functions, namely
\begin{eqnarray}
g_\X & = &(-4f_0f_2 + f_1^2)k_{11}^2 - 4f_0f_3k_{11}k_{12} - 
2f_1f_3k_{11}k_{13} - 4f_0k_{11}k_{14}-4f_0f_4k_{12}^2 + \nonumber \\
& &(4f_0f_5 - 4f_1f_4)k_{12}k_{13} - 2f_1k_{11}k_{24} + 
(-4f_0f_6 + 2f_1f_5 - 4f_2f_4 + f_3^2)k_{13}^2 - \nonumber \\
& &4f_2k_{11}k_{34} - 4f_0f_5k_{12}k_{22} + 
(8f_0f_6 - 4f_1f_5)k_{13}k_{22} + (4f_1f_6 - 4f_2f_5)k_{13}k_{23} - \label{gX} \\
& &2f_3k_{13}k_{24} - 2f_3f_5k_{13}k_{33} - 4f_4k_{13}k_{34} - 4k_{14}k_{34} 
- 4f_0f_6k_{22}^2 - 4f_1f_6k_{22}k_{23} - \nonumber \\
& &4f_2f_6k_{23}^2 + k_{24}^2 - 4f_3f_6k_{23}k_{33} - 2f_5k_{23}k_{34} 
+ (-4f_4f_6 + f_5^2)k_{33}^2 - 4f_6k_{33}k_{34}. \nonumber
\end{eqnarray}
Set 
\begin{align*}
e_1 &= 2f_0b_1+f_1b_2,\\
e_2 &= f_3b_3+2(f_4b_4+f_5b_5+f_6b_6),\\ 
e_3 &=f_5b_4+2f_6b_5.
\end{align*}
Then the four entries $\sQ_1 , \ldots , \sQ_4$ of the vector
\begin{equation}\label{vanishingodd}
\left(
\begin{array}{cccc}
0 & e_1 & -e_2 & -b_4 \\
-e_1 & 0 & -e_3 & b_3 \\
e_2 & e_3 & 0 & -b_2 \\
b_4 & -b_3 & b_2 & 0 \\
\end{array}
\right)
\left(
\begin{array}{c}
k_1 \\ k_2 \\ k_3 \\k_4
\end{array}
\right) = 
\left(
\begin{array}{c}
\sQ_1 \\ \sQ_2 \\ \sQ_3 \\ \sQ_4
\end{array}
\right)
\end{equation}
are linear combinations of the functions $k_ib_l$ that vanish on $J$. 
Multiplying each $\sQ_i$ by any of the four $k_j$ gives $16$ linear combinations 
$k_j\sQ_i$ of the functions $k_{ij}b_l$, and thus $16$ vanishing quadrics $k_j\sQ_i$ in the 
$k_{ij}$ and the $b_j$. Since the matrix in \eqref{vanishingodd} is 
antisymmetric, 
the linear combination $k_1\sQ_1+k_2\sQ_2+k_3\sQ_3+k_4\sQ_4$ 
is identically zero. 
It can be checked that this is
the only linear combination of the $k_j\sQ_i$ that vanishes 
identically, so we obtain a $15$-dimensional subspace 
of odd quadrics that vanish on 
$J$. Replacing each $b_i$ by $b_{i-1}$, with $b_0$ defined so
that $\sum_{i=0}^6 f_ib_i=0$ for notational convenience, we 
get another $15$-dimensional subspace of odd quadrics that also vanish on $J$.
These equations together give the full $30$-dimensional subspace of the odd 
vanishing quadrics. 

We are $21$ quadrics short of $72$.  
Note that the space of quadratic polynomials in $b_1 , \ldots , b_6$ 
has dimension $21$. 
The remaining $21$ vanishing quadrics express the quadratic polynomials in 
the $b_i$ in terms of the $k_{ij}$. We have for instance
\begin{align}\label{bsintermsofks}
b_1^2 &= f_2k_{11}^2 + f_3k_{11}k_{12} + k_{11}k_{14} + f_6k_{11}k_{33} 
+ f_4k_{12}^2 - f_5k_{12}k_{13} + f_5k_{12}k_{22} - 2f_6k_{13}k_{22} 
+ f_6k_{22}^2, \cr
2b_1b_2 &= -f_1k_{11}^2 + f_3k_{11}k_{13} + 2f_4k_{11}k_{23} 
+ k_{11}k_{24} - f_5k_{11}k_{33} - 2f_6k_{12}k_{33} + 2f_5k_{13}k_{22} 
+ 2f_6k_{22}k_{23}, \cr
b_2^2 &= f_0k_{11}^2 + f_4k_{13}^2 + k_{13}k_{14} + f_5k_{13}k_{23} 
+ f_6k_{22}k_{33}, \cr
2b_2b_3 &= 2f_0k_{11}k_{12} + f_1k_{11}k_{13} - f_3k_{13}^2 
+ k_{13}k_{24} + f_5k_{13}k_{33} + 2f_6k_{23}k_{33}, \cr
b_3^2 &= f_0k_{11}k_{22} + f_1k_{11}k_{23} + f_2k_{11}k_{33} 
+ k_{14}k_{33} + f_6k_{33}^2, \cr
2b_3b_4 &= - f_1k_{11}k_{33} - 2f_0k_{12}k_{13} + 2f_0k_{12}k_{22} 
+ 2f_2k_{12}k_{33} + 2f_1k_{13}k_{22} + f_3k_{13}k_{33} + k_{24}k_{33} 
- f_5k_{33}^2, \cr
b_4^2 &= f_0k_{11}k_{33} - 2f_0k_{13}k_{22} - f_1k_{13}k_{23} 
+ f_0k_{22}^2 + f_1k_{22}k_{23} + f_2k_{23}^2 + f_3k_{23}k_{33} 
+ f_4k_{33}^2 + k_{33}k_{34}.
\end{align}
For the full list, see~\cite{flynnweblisteqs}. 

\begin{remark}\label{YinP5}
The $42$-dimensional space of even vanishing quadratic polynomials 
contains a $3$-dimensional subspace of 
quadratic polynomials that only involve $b_1 , \ldots , b_6$.  
These describe the image of 
$\Y$ in $\P^5$. With respect to 
the sequence $(b_1, \ldots, b_6)$, the symmetric matrices $R^jT$ with 
$$
R = 
\left(
\begin{array}{cccccc}
0 &  0 & 0  &  0  &  0  & -f_0f_6^{-1} \\
1 &  0 & 0  &  0  &  0  & -f_1f_6^{-1} \\
0 &  1 & 0  &  0  &  0  & -f_2f_6^{-1} \\
0 &  0 & 1  &  0  &  0  & -f_3f_6^{-1} \\
0 &  0 & 0  &  1  &  0  & -f_4f_6^{-1} \\
0 &  0 & 0  &  0  &  1  & -f_5f_6^{-1} \\
\end{array}
\right) \qquad \mbox{and} \qquad
T = 
\left(
\begin{array}{cccccc}
f_1 & f_2 & f_3 & f_4 & f_5 & f_6 \\
f_2 & f_3 & f_4 & f_5 & f_6 &  0  \\
f_3 & f_4 & f_5 & f_6 &  0  &  0  \\
f_4 & f_5 & f_6 &  0  &  0  &  0  \\
f_5 & f_6 &  0  &  0  &  0  &  0  \\
f_6 &  0  &  0  &  0  &  0  &  0  \\
\end{array}
\right)
$$
and $0\leq j \leq 2$ correspond to quadratic polynomials that span this subspace. 
The reader is 
encouraged to compute the matrices $R^jT$ for $1\leq j \leq 7$, which will come back 
in Section~\ref{desingkummer}.
\end{remark}

\begin{remark}\label{mapXY}
Note that we can use the last $21$ given even quadrics 
to describe the rational map from $\X$ to $\Y$. Indeed, 
a general point $P$ on $\Y \subset \P^5$ 
is given by $[b_r(P)b_1(P) : \cdots : b_r(P)b_6(P)]$ for any 
fixed $r$. As mentioned above, 
all quadratic polynomials in the $b_i$ can be expressed as quadratics 
in the $k_{ij}$, or 
as quartics in the $k_i$. The corresponding expressions for 
$b_rb_1, \ldots, b_rb_6$ induce 
a map from $\X$ to $\Y$ that is the rational inverse of the blow-up 
morphism $\Y \rightarrow \X$. 
This morphism can be described explicitly as 
\begin{align*}
[b_1:\cdots:b_6]\mapsto [k_1:k_2:k_3:k_4] = 
[&b_1b_3 - b_2^2:b_1b_4 - b_2b_3 :b_2b_4 - b_3^2: \\
          &f_0b_1^2 + f_1b_1b_2 + f_2b_2^2 + f_3b_2b_3 
	  + f_4b_3^2 + f_5b_3b_4 + f_6b_4^2],
\end{align*}
which can be checked either by expressing the quadratic 
polynomials in the $b_i$ 
in terms of the $k_{ij}$, or by checking directly that for 
instance $b_1b_3 - b_2^2 = -y_1y_2$. 
Furthermore, as this map only involves $b_1,b_2,b_3$, and $b_4$, 
it factors through the projection of 
$\P^5$ on the $\P^3$ with coordinates $b_1, \ldots, b_4$. 
The image of $\Y$ under this 
projection is the Weddle surface (see~\cite[Chapter~5]{caf}), 
which is given by 
\begin{align*}
&f_0b_1^3b_4-3f_0b_1^2b_2b_3+f_1b_1^2b_2b_4-f_1b_1^2b_3^2+2f_0b_1b_2^3
-f_1b_1b_2^2b_3+f_2b_1b_2^2b_4-\nonumber\\
   &2f_2b_1b_2b_3^2-f_3b_1b_3^3-f_4b_1b_3^2b_4-f_5b_1b_3b_4^2-f_6b_1b_4^3
   +f_1b_2^4+f_2b_2^3b_3+f_3b_2^3b_4+ \nonumber\\
   &2f_4b_2^2b_3b_4+f_5b_2^2b_4^2-f_4b_2b_3^3+f_5b_2b_3^2b_4
   +3f_6b_2b_3b_4^2-f_5b_3^4-2f_6b_3^3b_4=0. \nonumber
\end{align*}
This answers the question in~\cite[Section~16.6]{caf} to describe the map 
$\X \to \Y$ explicitly. 
\end{remark}

\section{Another description of the desingularized Kummer surface}
\label{desingkummer}

The description of the desingularized Kummer surface given in this 
section is also given 
in~\cite[Chapter~16]{caf}. As in~\cite{lolu},
we also extend it to twists of the surface. This new description 
serves several purposes. First of all, over $\ksep$ it allows us a 
find a set of three {\em diagonal} quadratic forms that describe $\Y$. 
Second, it helps us to understand the action of $J[2]$ on our explicit 
model of $\Y$ in $\P^5$. In fact these two purposes are closely related.

Consider the projective space 
$\P(L)\isom (L-\{0\})/k^*$ over $k$ with $L=k[X]/f$ 
as before. Its 
homogeneous coordinate ring is $\Sym(\check{L})=
\bigoplus_{n\geq 0} \Sym^n(\check{L})$, where
$\check{L}=\Hom(L,k)$ is the dual of $L$. 
One important basis of $L$, though not particularly 
convenient to work with, is 
the power basis $1,X,\ldots,X^5$. Its dual basis of $\check{L}$
is $p_0,\ldots,p_5$, where $p_i$ just gives the coefficient 
of $X^i$, so that for each $z \in L$ we have $z = \sum_{i=0}^5
p_i(z) X^i$. This dual basis determines a coordinate system on $\P(L)$.

For any $\delta \in L^*$, let $C_0^{(\delta)},\ldots,C_5^{(\delta)} \in 
\Sym^2(\check{L})$ be quadratic forms such that 
$C_j^{(\delta)}(z) = p_j(\delta z^2)$, and let $V_\delta \subset \P(L)$
be the 
variety defined by $C_3^{(\delta)}=C_4^{(\delta)}=C_5^{(\delta)}=0$. 
Then $V_\delta(\ksep)$ is the image in 
$\P(\Lsep) = (\Lsep \setminus \{0\})/\ksep^*$ 
of the subset 
$$
\sV_\delta = \{\xi \in \Lsep \setminus \{0\} \,\, 
: \,\, \delta \xi^2 = rX^2+sX+t 
\mbox{ for some } r,s,t \in \ksep\} \subset \Lsep \setminus \{0\}
$$ 
for any $\delta \in L^*$.
Recall that the Cassels map 
$\kappa \circ \iota \colon J(k)/2J(k) \rightarrow L^*/{L^*}^2k^*$ sends 
the class of the divisor $\big((x_1,y_1)\big)+\big((x_2,y_2)\big) - K_C$ to 
$(X-x_1)(X-x_2)$. Therefore,
if the class of $\delta \in L^*$ in $L^*/{L^*}^2k^*$ is in the image of the 
Cassels map, then there exists a $\xi \in L^*$ and $s,t,c \in k^*$ such 
that $\delta \xi^2 = c(X^2-sX+t)$, i.e., such that $(\xi \cdot k^*) \in \P(L)$ 
is contained in $V_\delta$. 

In this section we will see that $V_1$ is 
isomorphic to the desingularized Kummer surface $\Y$ and that 
$V_\delta$ is a twist of $V_1$ for every $\delta \in L^*$. 
If $\delta$ has square norm, say $N(\delta) = n^2$, then 
there is a two-covering $A$ of the Jacobian $J$ 
corresponding to the cocycle class in $\H^1(J[2])$ that is the image of 
$(\delta,n) \in \Gamma$ under the map in Corollary~\ref{Pone}; 
in Section~\ref{twistingsection} we will see that 
$V_\delta$ is a quotient of $A$. 

Note that although we used $p_i$ to define $C_i^{(\delta)} \in \Sym^2(\check{L})$,
we have not yet expressed the quadrics $C_i^{(\delta)}$ in terms of any 
basis of $\check{L}$. 
Before we do so, and thus describe $V_\delta$ explicitly with respect to 
various bases of $\check{L}$, we make some basis-free remarks. 

For any $a \in L^*$, let $m_a$ denote the linear automorphism of 
$L$ given by multiplication by $a$, and let $\check{m}_a$ be its
dual automorphism of $\check{L}$, so that for every
$h \in \check{L}$ and every $z \in L$ we have
$\check{m}_a(h)(z) = h(m_a(z)) = h(az)$. 
The automorphism of 
$\check{L} \otimes_k \ldots \otimes_k \check{L}$ induced by 
the action of $\check{m}_a$ on each factor $\check{L}$ 
induces an automorphism of $\Sym^n(\check{L})$ for every $n$, which we 
also denote by $\check{m}_a$.  In particular we have 
$\check{m}_a (C_i^{(\delta)})(z) = p_i(\delta \cdot (a z)(a z))
=p_i(\delta a^2 z^2) = C_i^{(\delta a^2)}(z)$ for all 
$z \in \Lsep$ and all $i \in  \{0,\ldots, 5\}$. The automorphism of 
$\check{L} \otimes_k \ldots \otimes_k \check{L}$ induced by 
the action of $\check{m}_a$ on 
exactly one copy of $\check{L}$ induces an automorphism of $\Sym^n(\check{L})$ 
that we denote by $\check{m}_a^\circ$. Note that on $\Sym^n(\check{L})$
we have $(\check{m}_a^\circ)^n = \check{m}_a$.

\begin{proposition}\label{multbyxi}
For any $\delta,\xi \in L^*$, the automorphism $m_\xi$ of $\P(L)$ 
induces an isomorphism from $V_{\delta\xi^2}$ to $V_\delta$.
\end{proposition}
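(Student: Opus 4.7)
The plan is to proceed by direct verification, using precisely the identity $\check{m}_a(C_i^{(\delta)}) = C_i^{(\delta a^2)}$ that was established in the paragraph immediately preceding the proposition. Since $\xi \in L^*$, the multiplication map $m_\xi \colon L \to L$ is a $k$-linear automorphism with inverse $m_{\xi^{-1}}$, so it induces an automorphism of $\P(L)$ defined over $k$. The content of the proposition is that under this automorphism the closed subscheme $V_{\delta\xi^2}$ maps isomorphically onto $V_\delta$, and for this it suffices to check that the pullback of the defining ideal of $V_\delta$ is the defining ideal of $V_{\delta\xi^2}$.

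First I would set $a = \xi$ in the identity just mentioned. For $i \in \{3,4,5\}$ and any $z \in \Lsep$, one has
\[
\check{m}_\xi\bigl(C_i^{(\delta)}\bigr)(z) \;=\; C_i^{(\delta)}(\xi z) \;=\; p_i\bigl(\delta(\xi z)^2\bigr) \;=\; p_i\bigl((\delta\xi^2)z^2\bigr) \;=\; C_i^{(\delta\xi^2)}(z).
\]
Thus the pullback under $m_\xi$ of each of the three quadratic forms $C_3^{(\delta)}, C_4^{(\delta)}, C_5^{(\delta)}$ cutting out $V_\delta$ in $\P(L)$ is the corresponding quadratic form cutting out $V_{\delta\xi^2}$. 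Equivalently, $\xi z \in \sV_\delta$ if and only if $z \in \sV_{\delta\xi^2}$, so $m_\xi$ sends $V_{\delta\xi^2}$ into $V_\delta$.

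Finally, the same argument applied with $\xi$ replaced by $\xi^{-1}$ (and $\delta$ replaced by $\delta\xi^2$) shows that $m_{\xi^{-1}}$ sends $V_\delta$ into $V_{\delta\xi^2}$. Since $m_\xi$ and $m_{\xi^{-1}}$ are mutually inverse automorphisms of $\P(L)$ over $k$, their restrictions give mutually inverse morphisms between $V_{\delta\xi^2}$ and $V_\delta$, hence the desired isomorphism. There is no real obstacle; the proof is purely a matter of invoking the formula for the action of $\check{m}_\xi$ on the $C_i^{(\delta)}$ that was recorded just before the statement.
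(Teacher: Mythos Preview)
Your proof is correct and follows essentially the same approach as the paper: both invoke the identity $\check{m}_\xi(C_i^{(\delta)}) = C_i^{(\delta\xi^2)}$ established just before the statement and conclude directly that $m_\xi$ carries $V_{\delta\xi^2}$ to $V_\delta$. The paper's version is slightly terser, omitting the explicit mention of the inverse, but the substance is identical.
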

\begin{proof}
As mentioned above, for $i \in \{0,\ldots, 5\}$ and for all 
$z \in \Lsep$ we have $\check{m}_\xi (C_i^{(\delta)})(z) = 
C_i^{(\delta\xi^2)}(z)$. Since $V_\delta$ is defined by 
$C_3^{(\delta)}=C_4^{(\delta)}=C_5^{(\delta)}=0$ and $V_{\delta\xi^2}$ by 
$C_3^{(\delta\xi^2)}=C_4^{(\delta\xi^2)}=C_5^{(\delta\xi^2)}=0$, 
we conclude that $m_\xi$ 
induces an isomorphism from $V_{\delta\xi^2}$ to $V_\delta$.
\end{proof}

\begin{corollary}
For any $\delta \in L^*$, the surfaces $V_\delta$ and $V_1$ 
are isomorphic over $\ksep$.
\end{corollary}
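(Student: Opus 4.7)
The plan is very short: the corollary follows immediately from Proposition~\ref{multbyxi} once we observe that every element of $L^*$ becomes a square in $\Lsep^*$.

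First, I would recall that via the isomorphism $\varphi \colon \Lsep \to \bigoplus_{\omega \in \Omega} \ksep$ the algebra $\Lsep$ is a direct product of copies of $\ksep$, so every unit in $\Lsep$ is a square: explicitly, given $\delta \in L^* \subset \Lsep^*$, write $\varphi(\delta) = (d_\omega)_\omega$ with $d_\omega \in \ksep^*$, choose a square root $e_\omega \in \ksep^*$ of each $d_\omega$, and set $\xi = \varphi^{-1}\big((e_\omega)_\omega\big) \in \Lsep^*$. Then $\xi^2 = \delta$ in $\Lsep^*$.

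Next, I would apply Proposition~\ref{multbyxi} after base change to $\ksep$: working with the varieties $(V_\delta)_{\ksep}$ and $(V_1)_{\ksep}$ inside $\P(\Lsep) = \P(L)_{\ksep}$, the proposition (which is stated over an arbitrary base by the formula $\check{m}_\xi(C_i^{(\delta')}) = C_i^{(\delta'\xi^2)}$) shows that multiplication by $\xi$ gives an isomorphism from $V_{1 \cdot \xi^2} = V_\delta$ to $V_1$ over $\ksep$. This is the required isomorphism.

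There is no real obstacle here: the only content beyond Proposition~\ref{multbyxi} is the fact that $\Lsep^*$ is $2$-divisible, which is immediate from the product decomposition of $\Lsep$. The interest of the corollary is rather that it sets up $V_\delta$ as a $\ksep/k$-twist of $V_1$, preparing the ground for the identification of $V_1$ with $\Y$ and of $V_\delta$ with the corresponding quotient of the two-covering $A$ in Section~\ref{twistingsection}.
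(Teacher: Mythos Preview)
Your proof is correct and essentially identical to the paper's: both choose a square root $\varepsilon\in\Lsep^*$ of $\delta$ and invoke Proposition~\ref{multbyxi} (base-changed to $\ksep$) to obtain the isomorphism. The paper phrases it as applying the proposition with $\xi=\varepsilon^{-1}$, giving $m_{\varepsilon^{-1}}\colon V_1\to V_\delta$, while you apply it with base $1$ and $\xi=\varepsilon$ to get $m_\varepsilon\colon V_\delta\to V_1$; these are inverse maps and the argument is the same.
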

\begin{proof}
Choose $\varepsilon \in \Lsep^*$ with $\varepsilon^2 = \delta$ 
and apply Proposition~\ref{multbyxi} with $\xi = \varepsilon^{-1}$. 
\end{proof}

\begin{corollary}\label{action}
The map $\mu_2(\Lsep) \rightarrow \Aut (\P(\Lsep))$ that sends 
$\xi$ to $m_\xi$ induces an injective homomorphism 
$\mu_2(\Lsep)/\mu_2 \rightarrow \Aut\big((V_\delta)_\ksep\big)$.
\end{corollary}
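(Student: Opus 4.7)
The plan is to verify three items in sequence: that $m_\xi$ preserves $(V_\delta)_\ksep$ when $\xi \in \mu_2(\Lsep)$, that the assignment $\xi \mapsto m_\xi$ descends to a well-defined homomorphism out of $\mu_2(\Lsep)/\mu_2$, and that this descended map is injective.

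The first two points are essentially formal. Since $\xi^2 = 1$, Proposition~\ref{multbyxi} (applied over $\ksep$, as $\xi \in \Lsep^*$) gives that $m_\xi$ is an isomorphism $V_{\delta\xi^2} = V_\delta \to V_\delta$, so $m_\xi \in \Aut((V_\delta)_\ksep)$. Multiplicativity of $\xi \mapsto m_\xi$ follows from $m_{\xi_1\xi_2} = m_{\xi_1} \circ m_{\xi_2}$, which holds because $L$ is commutative. Scalars $\xi \in \ksep^*$ induce the identity on $\P(\Lsep)$, so $\mu_2 = \mu_2(\ksep) \subset \mu_2(\Lsep)$ lies in the kernel and we obtain the desired map $\mu_2(\Lsep)/\mu_2 \to \Aut((V_\delta)_\ksep)$.

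The only nontrivial step is injectivity. My strategy is to exhibit a single witness point in $V_\delta(\ksep)$ whose image in $\bigoplus_\omega \ksep$ has all coordinates nonzero, which then forces any $\xi$ in the kernel to act as a global scalar. Concretely, choose $\varepsilon \in \Lsep^*$ with $\varepsilon^2 = \delta$; this is possible because $\delta \in L^*$. Then $\delta \cdot (\varepsilon^{-1})^2 = 1$ is a constant polynomial in $X$, hence satisfies $p_3 = p_4 = p_5 = 0$, so $[\varepsilon^{-1}] \in V_\delta(\ksep)$. Since $\varepsilon^{-1} \in \Lsep^*$, the tuple $\varphi(\varepsilon^{-1}) = (\varphi_\omega(\varepsilon)^{-1})_\omega$ has all entries nonzero. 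If $\xi$ lies in the kernel, then $m_\xi$ fixes $[\varepsilon^{-1}]$, meaning $\xi \cdot \varepsilon^{-1} = \lambda \cdot \varepsilon^{-1}$ in $\Lsep$ for some $\lambda \in \ksep^*$, whence $\xi = \lambda \in \ksep^*$. Combined with $\xi^2 = 1$, this yields $\xi \in \mu_2$.

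I do not expect any serious obstacle in this argument; the only conceptual point is identifying the right witness point. Once one observes that $V_\delta$ is precisely designed to contain the classes $[\xi]$ with $\delta\xi^2$ of degree at most two, the element $\varepsilon^{-1}$ (with $\delta\xi^2 = 1$) is the most natural choice, and its image under $\varphi$ automatically has full support because $\varepsilon \in \Lsep^*$. If one instead tried to prove injectivity by arguing that $V_\delta$ spans $\P(\Lsep)$, that route would require appealing to the forthcoming identification $V_1 \cong \Y$ and the nondegeneracy of $\Y \subset \P^5$ from Proposition~\ref{ratquot}; the witness-point approach avoids this and keeps the argument self-contained within the current section.
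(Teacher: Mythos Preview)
Your proof is correct and follows essentially the same approach as the paper: both use Proposition~\ref{multbyxi} to get the homomorphism, note that scalars act trivially, and for injectivity choose $\varepsilon\in\Lsep^*$ with $\varepsilon^2=\delta$, observe that $[\varepsilon^{-1}]\in V_\delta(\ksep)$, and argue that any $\xi$ fixing this point must be a scalar in $\mu_2$. Your additional commentary about the coordinates under $\varphi$ and the alternative span-of-$V_\delta$ route is superfluous but not harmful.
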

\begin{proof}
By Proposition~\ref{multbyxi} we get a homomorphism 
$\psi \colon \mu_2(\Lsep) \rightarrow \Aut\big((V_\delta)_\ksep\big)$. 
Clearly we have $\mu_2 \subset \ker \psi$. 
Choose $\varepsilon \in \Lsep^*$ with $\varepsilon^2 = \delta$
and let 
$P \in \P(L)$ be the image of $\varepsilon^{-1}$ 
in $\P(L) = (L-\{0\})/\ksep^*$. Note that we have $P \in (V_\delta)_\ksep$.
Suppose that $\xi \in \ker \psi$, so  
the automorphism $m_\xi$ induces the identity on $V_\delta$. 
Then $m_\xi(P) = P$, so $\xi \varepsilon^{-1} = c \varepsilon^{-1}$ 
for some $c \in \ksep^*$. We conclude $\xi =c \in \mu_2(\Lsep) \cap \ksep^*
= \mu_2(\ksep)$, so $\ker \psi = \mu_2$ and $\psi$ induces an injection
$\mu_2(\Lsep)/\mu_2 \rightarrow \Aut\big((V_\delta)_\ksep\big)$.
\end{proof}

We have $\check{m}_a^\circ(C_j^{(\delta)})(z) = p_j(\delta (az)z) = 
C_j^{(\delta a)}(z)$ for $j \in \{0,\ldots, 5\}$, so in particular we 
find $\check{m}_\delta^\circ (C_j^{(1)}) = C_j^{(\delta)}$. 
Note, however, that the action of $\check{m}_\delta^\circ$ on 
$\Sym^2 \check{L}$ is not induced in the normal way by the action of 
$\check{m}_\delta$ on $\check{L}$, so this last equality does not imply 
that we get an isomorphism between $V_\delta$ and $V_1$ defined over the 
field of definition of $\delta$. Still, it does help us to get a 
better understanding of the quadrics that define $V_\delta$.
For $a=X$ and any $z \in L$ we have 
\begin{align*}
\sum_{j=0}^5 \check{m}_X^\circ (C_j^{(\delta)})(z) \, X^j &= 
\sum_{j=0}^5 p_j(\delta(Xz)z) X^j = X \cdot \delta z^2 = 
X\left( \sum_{j=0}^5 C_j^{(\delta)}(z)\, X^j\right) \\
& = -\frac{f_0}{f_6}C_5^{(\delta)}(z) + 
\sum_{j=1}^5 \left(C_{j-1}^{(\delta)}(z)
-\frac{f_j}{f_6}C_5^{(\delta)}(z)\right)\, X^j,
\end{align*}
where the last equality can also be interpreted as coming from the 
fact that with respect to the basis $(1,X,\ldots, X^5)$,
the action of $m_X$ on $L$ is given by multiplication from the left by the 
matrix $R$ of Remark~\ref{YinP5}. Comparing coefficients of $X^{j+1}$, 
we conclude by downward induction on $j$ that 
\begin{align}
f_6 C_j^{(\delta)} &= f_{j+1} C_5^{(\delta)} + 
f_{j+2} \check{m}_X^\circ C_5^{(\delta)} + \ldots +
f_6 (\check{m}_X^\circ)^{5-j} C_5^{(\delta)}, \label{Cjs}
\end{align}
for $0\leq j \leq 5$. 
For every integer $j \geq 0$, set 
\begin{equation}\label{Qj}
Q_j^{(\delta)} = (\check{m}_X^\circ)^j C_5^{(\delta)} = 
((\check{m}_X^\circ)^j \circ \check{m}_\delta^\circ) C_5^{(1)}.
\end{equation}
Then we can write \eqref{Cjs} as 
\begin{equation}\label{QinC}
f_6\big(C_0^{(\delta)}\,\,\,\,C_1^{(\delta)}\,\,\cdots \,\, C_5^{(\delta)}\big) = 
\big(Q_0^{(\delta)}\,\,\,\, Q_1^{(\delta)}\,\,\cdots \,\, 
Q_5^{(\delta)}\big) \cdot T, 
\end{equation}
with the 
matrix $T$ of Remark~\ref{YinP5}. From \eqref{QinC} we deduce
\begin{align}
Q_0^{(\delta)} &= C_5^{(\delta)},\nonumber \\
Q_1^{(\delta)} &= C_4^{(\delta)}-f_5f_6^{-1}C_5^{(\delta)},\label{Qis}\\
Q_2^{(\delta)} &= C_3^{(\delta)} -f_5f_6^{-1}C_4^{(\delta)}+(f_5^2f_6^{-2}-f_4f_6^{-1})C_5^{(\delta)}. \nonumber
\end{align}

\begin{proposition} \label{Veqsone}
For any $\delta \in L^*$, 
the surface $V_\delta$ is given by $Q_0^{(\delta)}=Q_1^{(\delta)}=
Q_2^{(\delta)}=0$.
\end{proposition}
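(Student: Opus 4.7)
The plan is to show that the ideals generated by $\{C_3^{(\delta)},C_4^{(\delta)},C_5^{(\delta)}\}$ and by $\{Q_0^{(\delta)},Q_1^{(\delta)},Q_2^{(\delta)}\}$ in $\Sym(\check{L})$ coincide; since $V_\delta$ is defined by the vanishing of the first three quadrics, this immediately yields the claim.

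First, I would read off from equation \eqref{QinC} the entries relating the last three $C_j^{(\delta)}$ to the first three $Q_i^{(\delta)}$. Using the explicit shape of the matrix $T$ of Remark~\ref{YinP5} (which has $f_6$ along the anti-diagonal and zeros strictly above it in the last three columns), one finds immediately
\[
C_5^{(\delta)} = Q_0^{(\delta)}, \quad
f_6 C_4^{(\delta)} = f_5 Q_0^{(\delta)} + f_6 Q_1^{(\delta)}, \quad
f_6 C_3^{(\delta)} = f_4 Q_0^{(\delta)} + f_5 Q_1^{(\delta)} + f_6 Q_2^{(\delta)},
\]
which is exactly the set of relations displayed in \eqref{Qis} (solved the other way).

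Next I would observe that, since $f_6 \neq 0$, this is a triangular invertible change of basis. Thus one can express each $Q_i^{(\delta)}$ as a $k$-linear combination of $C_3^{(\delta)}, C_4^{(\delta)}, C_5^{(\delta)}$ and vice versa, so the two triples cut out the same subscheme of $\P(L)$.

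There is no serious obstacle here; the content of the statement is entirely packaged into \eqref{QinC} and the invertibility of the lower-right $3\times 3$ block of $T$, which is triangular with diagonal entries equal to $f_6$. The only thing worth being careful about is that the equivalence is literally an equality of ideals (not merely of zero loci), which follows because the change of basis has coefficients in $k$.
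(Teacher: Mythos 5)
Your argument is correct and is essentially the paper's own proof: both rest on the observation, already packaged in \eqref{QinC} and \eqref{Qis}, that $Q_0^{(\delta)},Q_1^{(\delta)},Q_2^{(\delta)}$ and $C_3^{(\delta)},C_4^{(\delta)},C_5^{(\delta)}$ are related by an invertible (triangular, since $f_6\neq 0$) linear change with coefficients in $k$, hence generate the same ideal and define the same surface. The extra care you take about equality of ideals rather than just zero loci is a harmless refinement of the same argument.
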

\begin{proof}
This follows immediately from the fact that $Q_0^{(\delta)},Q_1^{(\delta)},Q_2^{(\delta)}$ 
are linear combinations 
of $C_3^{(\delta)}, C_4^{(\delta)}, C_5^{(\delta)}$ and vice versa.
\end{proof}

\begin{proposition}\label{Veqs}
Write $\delta \in L^*$ as $\delta = \sum_{i=0}^5 d_i X^i$. 
Then 
for any integer $j \geq 0$ we have 
$Q_j^{(\delta)} = \sum_{i=0}^5 d_i (\check{m}_X^\circ)^{i+j}(C_5^{(1)})$. 
\end{proposition}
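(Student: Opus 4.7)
The plan is to reduce the statement to a short computation once two elementary properties of the operators $\check{m}_a^\circ$ are in hand. From equation~\eqref{Qj} we already have
$$
Q_j^{(\delta)} = \bigl((\check{m}_X^\circ)^j \circ \check{m}_\delta^\circ\bigr)(C_5^{(1)}),
$$
so it suffices to rewrite $\check{m}_\delta^\circ$ as a $k$-linear combination of powers of $\check{m}_X^\circ$, and then compose with $(\check{m}_X^\circ)^j$.

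The first property I would record is additivity in the subscript: the assignment $a \mapsto \check{m}_a^\circ$ from $L$ to $\End_k(\Sym^n \check{L})$ is $k$-linear. This is immediate from the definition, since $a \mapsto \check{m}_a$ is $k$-linear on $\check{L}$ (being dual to the $k$-linear family of multiplications $m_a$ on $L$), and acting by $\check{m}_a$ on a single tensor factor of $\check{L}^{\otimes n}$ preserves this linearity. Applied to $\delta = \sum_{i=0}^5 d_i X^i$ it gives
$$
\check{m}_\delta^\circ = \sum_{i=0}^5 d_i\, \check{m}_{X^i}^\circ.
$$

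The second property is multiplicativity, $\check{m}_{ab}^\circ = \check{m}_a^\circ \circ \check{m}_b^\circ$. At the level of $\check{L}^{\otimes n}$, composing $\check{m}_a \otimes 1 \otimes \cdots \otimes 1$ with $\check{m}_b \otimes 1 \otimes \cdots \otimes 1$ gives $(\check{m}_a \circ \check{m}_b) \otimes 1 \otimes \cdots \otimes 1 = \check{m}_{ab} \otimes 1 \otimes \cdots \otimes 1$, since $m_{ab} = m_a \circ m_b$ and dualisation reverses composition. Inducting on $i$ then yields $\check{m}_{X^i}^\circ = (\check{m}_X^\circ)^i$.

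Combining the two properties, $\check{m}_\delta^\circ = \sum_{i=0}^5 d_i (\check{m}_X^\circ)^i$, and substituting into the formula for $Q_j^{(\delta)}$ gives
$$
Q_j^{(\delta)} = (\check{m}_X^\circ)^j \left(\sum_{i=0}^5 d_i (\check{m}_X^\circ)^i (C_5^{(1)})\right) = \sum_{i=0}^5 d_i\, (\check{m}_X^\circ)^{i+j}(C_5^{(1)}),
$$
as claimed. There is no genuine obstacle: the only point requiring care is that $\check{m}_a^\circ$ descends consistently from $\check{L}^{\otimes n}$ to $\Sym^n \check{L}$, but this was already set up in the paragraph preceding Proposition~\ref{multbyxi} and is used without comment there, so it can be invoked here as well.
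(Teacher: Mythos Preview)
Your proof is correct and takes essentially the same approach as the paper's, which simply asserts $\check{m}_\delta^\circ = \sum_{i=0}^5 d_i (\check{m}_X^\circ)^{i}$ and then invokes~\eqref{Qj}; you have supplied the linearity and multiplicativity of $a \mapsto \check{m}_a^\circ$ that justify that assertion. Your closing remark correctly flags the only delicate point (descent of the one-factor action to $\Sym^n \check{L}$) and appropriately defers it to the paper's setup.
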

\begin{proof}
We have $\check{m}_\delta^\circ = \sum_{i=0}^5 d_i (\check{m}_X^\circ)^{i}$, 
so this follows directly from \eqref{Qj}.
\end{proof}

Propositions~\ref{Veqsone} and~\ref{Veqs} show that in order to give 
explicit equations in terms of some coordinate system on $\P(L)$ for 
$V_\delta$ with $\delta = \sum_{i=0}^5 d_i X^i$, it suffices to 
know $C_5^{(1)}$ in terms of the basis of $\check{L}$ that corresponds
to that system and $\check{m}_X^\circ$ with respect to that
basis. Note also that for $\xi = \sum_{i=0}^5 c_i X^i$ we have 
$\check{m}_\xi^\circ = \sum_{i=0}^5 c_i (\check{m}_X^\circ)^i$, so 
knowing $\check{m}_X^\circ$ with respect to any basis, we know which 
linear combination of its powers gives $\check{m}_\xi^\circ$ with 
respect to that basis.

We now mention a few bases. 
The first we have already seen, namely the basis $(1,X,\ldots,X^5)$ 
of $L$ over $k$ 
with corresponding dual basis $(p_0,p_1,\ldots,p_5)$. For the second, 
note 
that the set $\{\varphi_\omega \, : \, \omega \in \Omega\}$ is an unordered
basis of $\check{\Lsep}$ over $\ksep$ 
with $\varphi_\omega \colon \Lsep \rightarrow \ksep, 
X \mapsto \omega$ as before. 
Set $\Pbar_\omega = \prod_{\theta\in \Omega\setminus\{\omega\}} 
(X-\theta)$ and $\lambda_\omega = \Pbar_\omega(\omega)$.
Then $P_\omega=\lambda_\omega^{-1}\Pbar_\omega$ is 
the corresponding Lagrange polynomial.
We have $\varphi_\omega(P_\theta) = P_\theta(\omega) = \delta_{\omega\theta}$, 
where $\delta_{\omega\theta}$ is the Kronecker-delta function, which equals 
$1$ if $\omega=\theta$ and $0$ otherwise. So 
$\{P_\omega \, : \, \omega \in \Omega\}$ is the unordered basis of 
$\Lsep$ over $\ksep$
that is dual to $\{\varphi_\omega \, : \, \omega \in \Omega\}$, 
with $P_\omega$ corresponding to $\varphi_\omega$ for all $\omega \in \Omega$.
The set $\{\Pbar_i\, : \, \omega \in \Omega \}$ is also an unordered 
basis of $\Lsep$, whose dual basis is 
$\{\phibar_\omega \, : \, \omega \in \Omega\}$
with $\phibar_\omega = \lambda_\omega^{-1} \varphi_\omega$. This gives a third 
pair of bases.
Note that $P_\omega\cdot P_\theta = \delta_{\omega\theta} P_\omega$, so we have a very
easy multiplication table for the $P_\omega$. 

\begin{proposition}\label{Veqsdiag}
In terms of the $\varphi_\omega$ and the $\phibar_\omega$ we have 
$$
Q_j^{(\delta)} 
= \sum_\omega \omega^j \lambda_\omega^{-1} \varphi_\omega(\delta) \varphi_\omega^2 
 = \sum_\omega \omega^j \phibar_\omega(\delta) \varphi_\omega^2=
\sum_\omega \omega^j \lambda_\omega \varphi_\omega(\delta) \phibar_\omega^2
$$
for all integers $j \geq 0$ and all $\delta \in L^*$.
\end{proposition}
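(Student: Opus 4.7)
The plan is to compute everything in the unordered basis $\{P_\omega\}_{\omega \in \Omega}$ of $\Lsep$ and its dual basis $\{\varphi_\omega\}_\omega$, exploiting the multiplication rule $P_\omega \cdot P_\theta = \delta_{\omega\theta} P_\omega$.

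First I would express $C_5^{(1)}$ in the basis $\{\varphi_\omega * \varphi_\theta\}_{\omega \leq \theta}$ of $\Sym^2 \check{\Lsep}$. Evaluated at a Lagrange polynomial $P_\omega$, the idempotence $P_\omega^2 = P_\omega$ gives $C_5^{(1)}(P_\omega) = p_5(P_\omega) = \lambda_\omega^{-1}$, since $P_\omega = \lambda_\omega^{-1}\Pbar_\omega$ has degree $5$ with leading coefficient $\lambda_\omega^{-1}$. Evaluation at $P_\omega + P_\theta$ with $\omega \neq \theta$ gives $C_5^{(1)}(P_\omega+P_\theta) = p_5(P_\omega + P_\theta) = \lambda_\omega^{-1} + \lambda_\theta^{-1}$ (because $(P_\omega+P_\theta)^2 = P_\omega+P_\theta$). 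Pairing this with $\varphi_\alpha(P_\omega + P_\theta) = \delta_{\alpha\omega}+\delta_{\alpha\theta}$ forces the off-diagonal coefficients of $C_5^{(1)}$ in the $\varphi_\omega * \varphi_\theta$ expansion to vanish, so
$$C_5^{(1)} = \sum_{\omega \in \Omega} \lambda_\omega^{-1}\, \varphi_\omega^2.$$

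Next I would determine the action of $\check{m}_X^\circ$ and $\check{m}_\delta^\circ$ on each $\varphi_\omega^2$. Viewing $\varphi_\omega^2$ as the quadratic form $z \mapsto \varphi_\omega(z)^2$ and using the convention $(\check{m}_a^\circ C)(z) = B(az,z)$ from the polarization $C(z) = B(z,z)$ (consistent with the author's earlier identity $\check{m}_a^\circ C_j^{(\delta)} = C_j^{(\delta a)}$), one gets $\check{m}_a^\circ \varphi_\omega^2 = \varphi_\omega(a)\,\varphi_\omega^2$ from $\varphi_\omega(az)\varphi_\omega(z) = \varphi_\omega(a)\varphi_\omega(z)^2$. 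Specializing to $a = X$ gives $\check{m}_X^\circ \varphi_\omega^2 = \omega\,\varphi_\omega^2$, and to $a = \delta$ gives $\check{m}_\delta^\circ \varphi_\omega^2 = \varphi_\omega(\delta)\,\varphi_\omega^2$. So the operators $\check{m}_X^\circ$ and $\check{m}_\delta^\circ$ are simultaneously diagonalized in the family $\{\varphi_\omega^2\}_\omega$.

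Combining these two steps with the definition $Q_j^{(\delta)} = (\check{m}_X^\circ)^j \circ \check{m}_\delta^\circ\, C_5^{(1)}$ from \eqref{Qj} yields
$$Q_j^{(\delta)} = \sum_{\omega} \lambda_\omega^{-1}\, \omega^j\, \varphi_\omega(\delta)\, \varphi_\omega^2,$$
which is the first expression. The middle expression follows at once from $\phibar_\omega = \lambda_\omega^{-1}\varphi_\omega$, hence $\phibar_\omega(\delta) = \lambda_\omega^{-1}\varphi_\omega(\delta)$. For the last expression, I would substitute $\varphi_\omega^2 = \lambda_\omega^2\,\phibar_\omega^2$ into the first one, which converts the overall factor $\lambda_\omega^{-1}$ into $\lambda_\omega$.

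The only real subtlety is the cross-term vanishing in Step 1; everything else is a straightforward diagonalization. This step is clean because the basis $\{P_\omega\}$ is orthogonal idempotent, so squaring is trivial and one only needs to read off the leading coefficient of the Lagrange polynomial.
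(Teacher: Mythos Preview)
Your proof is correct and uses the same core idea as the paper: the idempotent relations $P_\omega P_\theta = \delta_{\omega\theta}P_\omega$ force everything to be diagonal in the $\{\varphi_\omega\}$ basis, and $p_5(P_\omega)=\lambda_\omega^{-1}$ supplies the coefficients. The only difference is organizational: the paper computes $Q_j^{(\delta)}(z)=p_5(\delta(X^jz)z)$ directly for a general $z=\sum_\omega \varphi_\omega(z)P_\omega$ in one stroke, whereas you first pin down $C_5^{(1)}=\sum_\omega \lambda_\omega^{-1}\varphi_\omega^2$ by evaluating at $P_\omega$ and $P_\omega+P_\theta$ and then observe that $\check m_X^\circ$ and $\check m_\delta^\circ$ act diagonally on the $\varphi_\omega^2$.
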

\begin{proof}
For any $z\in L$ and $\delta \in L^*$ we have 
$z = \sum_\omega \phi_\omega(z) P_\omega$ 
and $\delta = \sum_\omega \phi_\omega(\delta) P_\omega$, so from 
$P_i\cdot P_j = \delta_{ij} P_i$ we find 
$\delta X^j z^2 
= \sum_\omega \omega^j \phi_\omega(\delta)\phi_\omega(z)^2 P_\omega$. 
We conclude that for all $z \in \Lsep$ we have 
$$
Q_j^{(\delta)}(z) = (\check{m}_X^\circ)^j (C_5^{(\delta)})(z) = 
p_5(\delta (X^jz)z) = p_5 \left( \sum_\omega \omega^j \varphi_\omega(\delta)
 \varphi_\omega(z)^2 P_\omega \right).
$$
Since the coefficient of $X^5$ in $P_\omega$ is $\lambda_\omega^{-1}$, 
we find $Q_j^{(\delta)} = \sum_\omega \omega^j \lambda_\omega^{-1} 
\varphi_\omega(\delta) \varphi_\omega^2$.
The other expressions follow immediately from $\phibar_\omega 
= \lambda_\omega^{-1} \varphi_\omega$.
\end{proof}

Because multiplication among the $P_\omega$ is very easy, and 
multiplication by $X$ is just multiplication of $P_\omega$ by 
$\omega$ for each $\omega$, the equations come out as simple as they do. 
Unfortunately, the $P_\omega$ and corresponding $\varphi_\omega$ 
are in general not defined over the ground field $k$. The fourth 
basis $(g_1,\ldots, g_6)$ with 
\begin{align*}
g_1 &= f_1 + f_2 X + f_3 X^2 + f_4 X^3 + f_5 X^4 + f_6 X^5, \\
g_2 &= f_2 + f_3 X + f_4 X^2 + f_5 X^3 + f_6 X^4, \\
g_3 &= f_3 + f_4 X + f_5 X^2 + f_6 X^3, \\
g_4 &= f_4 + f_5 X + f_6 X^2, \\
g_5 &= f_5 + f_6X, \\
g_6 &= f_6,\\
\end{align*}
is defined over $k$. Note that while multiplication by $X$ 
with respect to the basis $(1,X,\ldots, X^5)$ is given by multiplication 
from the left 
by the matrix $R$ of Remark~\ref{YinP5}, with respect to the basis 
$(g_1,\ldots, g_6)$ it is given by multiplication from the left by 
the transpose $R^{\rm t}$ of $R$, or, equivalently, by multiplication 
from the right by $R$. Also multiplication among the 
$g_i$ is given by relatively easy formulas. 
Note that the matrix $T$ of Remark~\ref{YinP5} describes the 
transformation between the bases $(1,X,\ldots, X^5)$ and 
$(g_1,\ldots, g_6)$. Let $(v_1, \ldots, v_6)$ be the basis of $\check{L}$ 
dual to the basis $(g_1,\ldots, g_6)$ of $L$. 

\begin{proposition}\label{Veqsinbi}
Take $\delta = \sum_{i=0}^5 d_i X^i \in L^*$. Then 
for every integer $j \geq 0$,
in terms of $v_1, \ldots, v_6$ the quadratic form $f_6^{-1}Q_j^{(\delta)}$ corresponds to 
the symmetric matrix $\sum_{i=0}^5 d_i R^{i+j} T$.
\end{proposition}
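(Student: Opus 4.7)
The plan is to pass from each quadratic form to its underlying symmetric bilinear form, and to exploit a near-orthogonality between the two bases $(1,X,\ldots,X^5)$ and $(g_1,\ldots,g_6)$ of $L$ with respect to the pairing $(z,z')\mapsto p_5(zz')$. By Proposition~\ref{Veqs} together with the definition $C_5^{(1)}(z) = p_5(z^2)$, one has
$$
Q_j^{(\delta)}(z) \;=\; p_5(\delta X^j z^2) \;=\; \sum_{i=0}^5 d_i\,p_5(X^{i+j} z^2).
$$
Since the conclusion is linear in $\delta$, the claim reduces to showing that for every integer $m\geq 0$ the quadratic form $q_m(z) := p_5(X^m z^2)$ is represented in the basis $(v_1,\ldots,v_6)$ by the matrix $f_6\,R^m T$.

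The associated symmetric bilinear form is $B_m(z,z') = p_5(X^m zz')$, which is symmetric because $L$ is commutative, and, since $\mathrm{char}\,k\neq 2$, its Gram matrix in any basis equals the matrix of $q_m$ in that basis. Because $B_m(z,z') = B_0(X^m z,\,z')$, the whole problem reduces to understanding $B_0$ together with the action of multiplication by $X^m$.

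The key computational lemma is the orthogonality
$$
p_5\bigl(g_l\,X^{a-1}\bigr) \;=\;
\begin{cases} f_6 & \text{if } a=l,\\ 0 & \text{otherwise,}\end{cases}
\qquad 1\leq l,a\leq 6.
$$
I would prove this by downward induction on $l$: for $l=6$ it is immediate from $g_6=f_6$, and for the inductive step the recursion $g_l = f_l + X\,g_{l+1}$ gives $g_l X^{a-1} = f_l X^{a-1} + X^a g_{l+1}$, from which the cases $1\leq a\leq 5$ and $a=6$ follow from the inductive hypothesis, with the $a=6$ case requiring the reduction $f_6 X^6 \equiv -(f_0 + f_1 X + \cdots + f_5 X^5) \pmod{f}$ to produce the cancellation of the $f_l X^{5}$ term.

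Granted this lemma, the rest is linear-algebraic bookkeeping. Let $[z]_g$ and $[z]_X$ denote the coordinate column vectors of $z\in L$ relative to $(g_l)$ and $(X^{a-1})$; by construction of $T$ one has $[z]_X = T[z]_g$, and since $R$ represents multiplication by $X$ on coordinates in $(X^{a-1})$, iteration yields $[X^m z]_X = R^m T[z]_g$. The orthogonality lemma is exactly the statement $B_0(y,z') = f_6\,[y]_X^{\,\mathrm{t}}\,[z']_g$ whenever the first argument is written in $(X^{a-1})$ and the second in $(g_l)$, so substituting $y = X^m z$ produces
$$
B_m(z,z') \;=\; f_6\,(R^m T[z]_g)^{\mathrm{t}}\,[z']_g \;=\; f_6\,[z]_g^{\,\mathrm{t}}\,(R^m T)^{\mathrm{t}}\,[z']_g,
$$
and the Gram matrix of $B_m$ in the basis $(v_1,\ldots,v_6)$ is $f_6\,(R^m T)^{\mathrm{t}}$. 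Finally the change-of-basis identity $RT = T R^{\mathrm{t}}$, which simply records that $R$ and $R^{\mathrm{t}}$ represent the same operator in the two bases, combined with the symmetry of the Hankel matrix $T$, yields $R^m T = T (R^{\mathrm{t}})^m = (R^m T)^{\mathrm{t}}$, so $R^m T$ is symmetric and the Gram matrix collapses to $f_6\,R^m T$, as required.

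The main obstacle is the orthogonality lemma $p_5(g_l X^{a-1}) = f_6$ or $0$; everything else is a change-of-basis exercise. Conceptually it says that $(g_l/f_6)$ and $(X^{a-1})$ are dual bases with respect to the residue-type pairing $(z,z')\mapsto p_5(zz')$ on $L$, and it is the point at which the precise coefficients of $f$ enter the argument.
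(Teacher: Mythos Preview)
Your proof is correct and follows essentially the same line as the paper's: both establish the base case (the matrix of $C_5^{(1)}$ is $f_6T$) and then iterate multiplication by $X$, using that this operator acts as $R^{\mathrm t}$ on the $(g_l)$-basis. The paper asserts the base case in one line (``writing $z^2$ as a linear combination of $1,X,\ldots,X^5$''), whereas you unpack it via the duality relation $p_5(g_l X^{a-1})=f_6\delta_{l,a}$; this is precisely the computation hidden in the paper's sentence, since $p_5(g_lg_m)=\sum_a T_{a,m}\,p_5(g_lX^{a-1})=f_6T_{l,m}$, and your explicit verification that $R^mT$ is symmetric (via $RT=TR^{\mathrm t}$) is a nice addition that the paper omits because symmetry is automatic for these quadratic forms.
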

\begin{proof}
For every $z \in \Lsep$ we have $z = \sum_{i=1}^6 v_i(z) g_i$. Writing 
$z^2$ as a linear combination of $1,X,\ldots, X^5$, we find that the 
quadratic form $C_5^{(1)}$ in terms of the $v_i$ corresponds to the 
symmetric matrix $f_6T$. As mentioned before, multiplication by $X$ with 
respect to the basis $(g_1,\ldots, g_6)$ corresponds to multipliction 
by the matrix $R$ from the right. This describes exactly the induced 
action on the $v_i$, so we conclude that 
for all integers 
$j \geq 0$, 
with respect to the basis 
$(v_1,\ldots, v_6)$, the quadratic form $(\check{m}_X^\circ)^j(C_5^{(1)})$
corresponds to the symmetric matrix $R^jT$. 
The proposition therefore follows from Proposition~\ref{Veqs}. 
\end{proof}

\begin{remark}
As $V_\delta$ is given by $Q_0^{(\delta)} = Q_1^{(\delta)} = Q_2^{(\delta)} =0$, 
we only need Proposition~\ref{Veqsinbi} for $j=0,1,2$ to find equations for 
$V_\delta$. Therefore, the required exponents of $R$ in $R^{i+j}$ vary from 
$0$ to $7$. As mentioned in Remark~\ref{YinP5}, it is worth writing down 
$R^nT$ for all $n$ with $0 \leq n \leq 7$ to see how 
simple the equations are. 
\end{remark}

\begin{corollary}\label{Vone}
In terms of the coordinates $v_1,\ldots, v_6$ of $\P(L)$, 
the surface $V_1$ is given by quadratic polynomials that correspond to the 
symmetric matrices $T$, $RT$, and $R^2T$. 
\end{corollary}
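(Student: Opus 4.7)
The plan is to deduce this directly from Propositions~\ref{Veqsone} and~\ref{Veqsinbi}, with virtually no new computation required.

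First I would invoke Proposition~\ref{Veqsone}, which tells us that $V_1$ is cut out by the three quadratic forms $Q_0^{(1)}$, $Q_1^{(1)}$, $Q_2^{(1)}$. So it suffices to identify the symmetric matrices representing these three forms with respect to the basis $(v_1,\ldots,v_6)$ dual to $(g_1,\ldots,g_6)$.

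Next I would specialize Proposition~\ref{Veqsinbi} to the case $\delta=1$. Writing $\delta=\sum_{i=0}^5 d_i X^i$, we have $d_0=1$ and $d_i=0$ for $i\geq 1$, so the matrix sum $\sum_{i=0}^5 d_i R^{i+j}T$ collapses to the single term $R^jT$. Proposition~\ref{Veqsinbi} therefore says $f_6^{-1}Q_j^{(1)}$ corresponds to $R^jT$ with respect to $(v_1,\ldots,v_6)$. Taking $j=0,1,2$ yields $T$, $RT$, $R^2T$ respectively. Since we are describing the zero locus in $\P(L)$, the nonzero scalar $f_6^{-1}$ (recall $f_6\neq 0$) is irrelevant: the quadrics $Q_j^{(1)}$ and $f_6^{-1}Q_j^{(1)}$ define the same variety.

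Combining these two observations gives the statement. There really is no obstacle here — the corollary is a direct readout of Proposition~\ref{Veqsinbi} at $\delta=1$, together with the identification of the defining quadrics from Proposition~\ref{Veqsone}. The only thing I would be careful about is noting explicitly that rescaling by $f_6^{-1}$ does not change the zero locus, so that the statement can be phrased (as it is) in terms of the matrices $T$, $RT$, $R^2T$ rather than their scalar multiples.
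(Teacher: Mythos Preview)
Your proposal is correct and matches the paper's own proof essentially verbatim: the paper simply states that $V_1$ is given by $Q_0^{(1)},Q_1^{(1)},Q_2^{(1)}$ and then invokes Proposition~\ref{Veqsinbi}. Your additional remarks about specializing $\delta=1$ and discarding the scalar $f_6^{-1}$ make explicit what the paper leaves implicit, but the argument is the same.
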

\begin{proof}
The surface $V_1$ is given by $Q_0^{(1)}$, $Q_1^{(1)}$, and $Q_2^{(1)}$. 
The corollary therefore follows immediately from Proposition~\ref{Veqsinbi}.
\end{proof}

By Remark~\ref{YinP5}, the surface $\Y \subset \P^5$ is given in terms of the 
coordinates $b_1,\ldots, b_6$ by quadratic forms that correspond to the 
symmetric matrices $T$, $RT$, and $R^2T$. These are the same matrices 
as in Corollary~\ref{Vone}, so $\Y$ and $V_1$ are isomorphic. 

\begin{definition}\label{rYrJ}
Let $\rY$ denote the isomorphism $\rY\colon \Y \rightarrow V_1$ given by 
$[b_1:\ldots :b_6] \mapsto \sum_{i=1}^6 b_ig_i$, or equivalently, in terms of 
the coordinate system $v_1,\ldots,v_6$, by $v_i = b_i$ for all $i$, and  
let $\rJ \colon J \dashrightarrow V_1$ denote the composition 
of $\rY$ with the rational quotient map $J \dashrightarrow \Y$, so that 
$\rJ(D) =  \sum_{i=1}^6 b_i(D) g_i \in \Lsep$ 
(see Proposition~\ref{ratquot}). 
\end{definition}

Cassels and the first author~\cite[Section~16.3]{caf}
also describe a rational map $J \dashrightarrow V_1$, which sends
the class of the divisor $D=\big((x_1,y_1)\big)+\big((x_2,y_2)\big)-K_C$ 
to the image in $\P(\Lsep)$ of the element 
$\xi=M(X)(X-x_1)^{-1}(X-x_2)^{-1}$, where 
$M(X)$ is the unique cubic polynomial such that the curve $y=M(x)$ meets the 
curve given by $y^2 = f(x)$ twice at $(x_1,y_1)$ and $(x_2,y_2)$. 
Moreover, if $H(X)$ denotes the quadratic polynomial whose image in $\Lsep$ 
equals $\xi^2$, then the roots of $H(x)$ are the $x$-coordinates of 
the points $R_1,R_2$ on $C$ with $2D \sim R_1+R_2-K_C$ and in fact we have 
$\big((y-M(x))/H(x)\big) = 2D - (R_1+R_2-K_C)$.
The following proposition tells us that this map coincides with $\rJ$.

\begin{proposition}\label{ratquotcoin}
Let $P_1=(x_1,y_1),P_2=(x_2,y_2)$ be points of $C$ and suppose that they are not Weierstrass points and that $x_1 \neq x_2$.
Let $M(X)$ be the unique cubic polynomial such that the curve $y=M(x)$ meets 
the curve given by $y^2 = f(x)$ twice in $P_1$ and $P_2$ and set 
$\xi=M(X)(X-x_1)^{-1}(X-x_2)^{-1} \in \Lsep$. Then we have 
$y_1y_2\xi = \sum_{i=1}^6 b_i(D) g_i$ with $D = [(P_1)+(P_2)-K_C]$. 
\end{proposition}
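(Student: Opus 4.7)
The plan is to use the decomposition $\Lsep \cong \bigoplus_{\omega \in \Omega} \ksep$ to reduce the claim to a polynomial identity, and then verify it by matching coefficients.

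First I would apply $\varphi_\omega$ to both sides of the proposed equality $y_1 y_2 \xi = \sum_{i=1}^6 b_i(D) g_i$. A direct calculation shows $\varphi_\omega(g_i) = \sum_{k=i}^6 f_k \omega^{k-i}$, which (since $f(\omega) = 0$) is exactly the coefficient of $X^{i-1}$ in the polynomial $f(X)/(X-\omega) \in \ksep[X]$. Since $P_1, P_2$ are not Weierstrass points, $x_1, x_2 \notin \Omega$, so $(\omega - x_1)(\omega - x_2) \neq 0$ for every $\omega \in \Omega$. Clearing this denominator, the claim reduces to the polynomial congruence
\[
(X - x_1)(X - x_2) \sum_{i=1}^6 b_i\, g_i(X) \;\equiv\; y_1 y_2\, M(X) \pmod{f(X)}
\]
in $k(x_1, y_1, x_2, y_2)[X]$; equivalently, we need a polynomial $h(X)$ with the difference equal to $h(X)\, f(X)$.

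Next I would identify $h(X)$. Since the left-hand side has degree $7$, $y_1 y_2 M(X)$ has degree $3$, and $f$ has degree $6$, $h$ must be linear. Matching the coefficients of $X^7$ and $X^6$ (using that $g_i$ has leading term $f_6 X^{6-i}$, and that $b_1$ is the only coefficient contributing to degree $7$) quickly yields
\[
h(X) \;=\; \frac{y_1(X - x_1) - y_2(X - x_2)}{x_1 - x_2}.
\]
With this $h$ pinned down, the remaining identities for the coefficients of $X^5, X^4, \ldots, X^0$ can be verified using the Hermite interpolation conditions $M(x_i) = y_i$ and $M'(x_i) = f'(x_i)/(2 y_i)$ --- equivalently, $(X-x_1)^2 (X-x_2)^2$ divides $M(X)^2 - f(X)$ --- together with the explicit formulas \eqref{bidef} for $b_1, \ldots, b_6$.

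The main obstacle is the bookkeeping in this last step: the expressions for $b_5$ and $b_6$ involve the auxiliary polynomial $G(x_1, x_2)$ and all the coefficients of $f$, so matching the middle coefficients $X^4, X^3, X^2$ requires careful term-tracking. Two symmetries lighten the load: both sides are symmetric under the swap $(x_1, y_1) \leftrightarrow (x_2, y_2)$ and change sign under $(y_1, y_2) \mapsto (-y_1, -y_2)$ (matching the parity of the $b_i$ and of $y_1 y_2 M(X)$), which roughly halves the number of independent checks. A geometric consistency check is also available: one already knows that both $\xi$ and $\sum_i b_i g_i$ define points on the common surface $V_1 \subset \P(L)$, so any surviving discrepancy would have to lie in the kernel of the map $\mu_2(\Lsep) \to \Aut(V_1)_{\ksep}$ from Corollary~\ref{action}, narrowing the possibilities.
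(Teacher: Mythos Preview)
Your reduction to a polynomial identity modulo $f$ is correct, and the determination of the linear remainder $h(X)$ from the top two coefficients is fine. This is a legitimate route to the result, but it is genuinely different from the paper's. The paper does not verify an identity coefficient by coefficient; instead it \emph{computes} $y_1y_2\xi$ directly in the $g_i$-basis. It first writes $M(X)$ explicitly via Hermite basis cubics $g_{c,d},h_{c,d}$, then uses the identity $1/(X-d) = -f(d)^{-1}\sum_{i=1}^6 d^{i-1}g_i$ in $\Lsep$ (the same relation underlying your observation about $\varphi_\omega(g_i)$, just applied in the other direction) to expand $M(X)(X-x_1)^{-1}(X-x_2)^{-1}$, and reads off the coefficients as the functions $b_i(D)$. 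The paper's approach is constructive and explains \emph{why} the $b_i$ appear, whereas yours front-loads the explicit formulas for $b_5,b_6$ and checks them; yours would be quicker with a computer algebra system, the paper's is more transparent by hand.

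One caveat: your final ``geometric consistency check'' does not work as stated. Knowing that both $\xi$ and $\sum_i b_i g_i$ lie on $V_1$ does not force their ratio into $\mu_2(\Lsep)$; two distinct points of $V_1$ need not be related by the $\mu_2(\Lsep)/\mu_2$-action of Corollary~\ref{action} at all. That paragraph should be dropped---the argument stands without it once the remaining coefficient checks are carried out.
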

\begin{proof}
First note that $b_i(D)$ is as given in \eqref{bidef}, except that 
$x_1,x_2,y_1,y_2$ are now elements of the ground field $\ksep$, 
rather than transcendental elements 
over $k$ in the function field of 
$C \times C$.  For any 
distinct $c,d \in \ksep$, set 
$$
g_{c,d}(X) = (c-d)^{-2}(X-c)^2(X-d), \qquad \mbox{and} \qquad
h_{c,d}(X) = (c-d)^{-3}(X-c)^2(2X+c-3d).
$$
Then 
$$
g_{c,d}(c) = g_{c,d}'(c) = h_{c,d}(c) = h_{c,d}'(c) 
= g_{c,d}(d) = h_{c,d}'(d) = 0,
\qquad g_{c,d}'(d) = h_{c,d}(d) = 1,
$$
so the polynomial 
$$
\tilde{M}(X) = \frac{f'(x_2)}{2y_2} g_{x_1,x_2}(X) 
+  \frac{f'(x_1)}{2y_1} g_{x_2,x_1}(X)
+ y_2 h_{x_1,x_2}(X) + y_1 h_{x_2,x_1}(X)
$$
satisfies $\tilde{M}(x_i) = y_i$ and $\tilde{M}'(x_i) 
= \frac{f'(x_i)}{2y_i}$ for $i=1,2$. From 
$M(x_i) = y_i$ and $M'(x_i) = \frac{f'(x_i)}{2y_i}$ we find 
that $x_1$ and $x_2$ are
distinct double roots of the cubic polynomial $M-\tilde{M}$, and we 
conclude $M=\tilde{M}$. 
Note that for any constant $d\in \ksep$, the quintic polynomial 
$(f(X)-f(d))/(X-d)$ 
equals $\sum_{i=1}^6 d^{i-1} g_i$. In $\Lsep$ we have $f(X)=0$, so 
if $f(d) \neq 0$, 
then we have
$$
\frac{1}{X-d} = -f(d)^{-1} \cdot \frac{f(X)-f(d)}{X-d} 
= -f(d)^{-1} \sum_{i=1}^6 d^{i-1} g_i, 
$$
and thus for any $c \in \ksep$ we find
$$
\frac{(c-d)^3h_{c,d}}{(X-c)(X-d)} = 2X-c-d-\frac{(c-d)^2}{X-d} = 
2X-c-d+\frac{(c-d)^2}{f(d)}\sum_{i=1}^6 d^{i-1} g_i.
$$
We also have $(c-d)^3g_{c,d}(X-c)^{-1}(X-d)^{-1} = (c-d)(X-c)$ 
and may therefore write 
\begin{align}
2(x_1-x_2)^3&y_1y_2\xi = 2(x_1-x_2)^3y_1y_2M(X)(X-x_1)^{-1}(X-x_2)^{-1} 
\nonumber \\
= & f'(x_2)y_1(x_1-x_2)(X-x_1) - f'(x_1)y_2(x_2-x_1)(X-x_2)\nonumber  \\
  & + 2y_1y_2(2X-x_1-x_2)(y_2-y_1) \label{bialt} \\
  & + 2y_1y_2\left( y_2\frac{(x_1-x_2)^2}{f(x_2)}\sum_{i=1}^6 x_2^{i-1} g_i 
          - y_1\frac{(x_1-x_2)^2}{f(x_1)}\sum_{i=1}^6 x_1^{i-1} g_i \right).\nonumber 
\end{align}
The last line of \eqref{bialt} is already written as a linear combination 
of $g_1,\ldots ,g_6$. To write the first two lines of the right-hand side 
of \eqref{bialt} as a linear combination of $g_1,\ldots ,g_6$ as well, we 
use $1 = f_6^{-1}g_6$ and $X = f_6^{-1}g_5 - f_5f_6^{-1}g_6$. Using 
$y_i^2 = f(x_i)$, we obtain $y_1y_2\xi = \sum_{i=1}^6 \overline{b}_i g_i$
for
\begin{align*}
\overline{b}_i &= \frac{x_2^{i-1}y_1-x_2^{i-1}y_2}{x_1-x_2}, \qquad  1\leq i \leq 4, \\
\overline{b}_5 &= \frac{G(x_1,x_2)y_1-G(x_2,x_1)y_2}{2f_6(x_1-x_2)^3}, \\
\overline{b}_6 &= \frac{H(x_1,x_2)y_1-H(x_2,x_1)y_2}{2f_6^2(x_1-x_2)^3}, \\
\end{align*}
with 
\begin{align*}
G(r,s) &= 2f_6(r-s)^2s^4+(r-s)f'(s)+4f(s), \\
H(r,s) &= 2f_6^2s^5(r-s)^2-(r-s)(f_5+f_6r)f'(s) -2(2f_5+f_6(r+s))f(s).
\end{align*}
Indeed, from \eqref{bidef} we get $\overline{b}_i = b_i(D)$, 
which proves the proposition.
\end{proof}

\begin{remark}
Cassels and the first author~\cite{caf} also show how to make the rational 
quotient map $J \dashrightarrow \Y$ explicit. However, their formula 
(16.3.8) is missing a factor $x-u$ and $u-x$ in the terms 
$-2F(x,X)$ and $-2F(u,X)$ respectively. Here $x$ and $u$ stand for our 
$x_1$ and $x_2$. 
\end{remark}

By Propositions~\ref{Veqsone} and~\ref{Veqsdiag}, we have three diagonal 
quadratic forms in terms of the coordinate system $\{\varphi_\omega\}_\omega$ 
that describe $V_1$ over $\ksep$ and, through $\rY$, also $\Y$. We 
could have already given an explicit linear automorphism of $\P^5$ to give 
these equations for $\Y$ in the previous section, but through the relation 
between $\Y$ and $V_1$ it comes more natural. 

\begin{remark}\label{whatisaction}
By Corollary~\ref{action} we have an action of $\mu_2(\Lsep)/\mu_2$ on $V_1$. 
Through the injection $\epsilon \colon J[2](\ksep) \rightarrow \mu_2(\Lsep)/\mu_2$ 
of Section~\ref{sectionsetup}, this 
induces an action of $J[2](\ksep)$ on $V_1$, and thus on $\Y$. 
On the coordinate system $\{\varphi_\omega\}_\omega$ this action corresponds 
to negating some of the coordinates, so we have simultaneously diagonalized 
the action of all two-torsion points on $V_1$ and $\Y$.

It is, however, a priori not obvious that this action of $J[2]$ 
coincides with the action on $\Y$ that is 
induced by the action on $J$ given by translation, even though it may be 
hard to imagine any other action by $J[2]$. 
This is indeed never claimed in~\cite{caf}, even though the action 
is mentioned~\cite[Section~16.2]{caf}. 
In the next section 
we will see that the actions do coincide. 
\end{remark}

\section{The action by the two-torsion subgroup}\label{sectionaction}

For any $P \in J[2](\ksep)$ the translation $T_P \in \Aut (J_\ksep)$ 
commutes with $[-1]$, so it induces automorphisms of $\X$ and $\Y$, 
both of which we denote by $T_P$ as well. 
Unless specifically mentioned otherwise, whenever we refer to the 
action of $J[2](\ksep)$ on $\X$, $\Y$ or $J$, we mean this action.
In this section we describe the action of $J[2](\ksep)$ on $J$ in $\P^{15}$ by 
first analyzing its action 
on the models of $\X$ in $\P^3$ and $\P^9$ and the model of $\Y$ in $\P^5$. 
These actions are all linear, induced by actions on the 
$k(\Omega)$-vector spaces $\sL(2(\Theta_++\Theta_-))$, 
$\sL(\Theta_++\Theta_-)$, $\Sym^2 \sL(\Theta_++\Theta_-)$,
and $\sL(2(\Theta_++\Theta_-)-\sum_P F_P)$ respectively. 
As no model is contained in a hyperplane, the actions on these 
vector spaces are well defined up to a constant.

Purely for notational convenience, we first define some groups isomorphic to 
the groups in Diagram~\eqref{Jtwokernel}.
Let $\Xi$ denote the group of subsets of $\Omega$, where the multiplication is 
given by taking symmetric differences, i.e., for $I_1,I_2 \subset \Omega$ 
we have $I_1 \cdot I_2 = (I_1 \cup I_2 )\setminus (I_1 \cap I_2)$. 
The identity element of $\Xi$ is the empty set.
To each element $x \in \mu_2(\Lsep) \isom \bigoplus_\omega \mu_2$ 
we can associate 
the set $\{\omega \in \Omega \,\, : \,\, \varphi_\omega(x) = -1\}$, which 
induces an isomorphism $e \colon \mu_2(\Lsep) \rightarrow \Xi$. 
We have $e(-1)=\Omega$, 
and multiplication by $-1$ on $\mu_2(\Lsep)$ corresponds to taking 
complements.  There is an induced isomorphism 
$e \colon \mu_2(\Lsep)/\mu_2 \rightarrow \Xi/\langle 
\Omega \rangle$ and elements of $\Xi/\langle \Omega \rangle$ can be viewed as 
partitions of $\Omega$ into two subsets. 
The perfect pairing described in Section~\ref{sectionsetup}
corresponds to the pairing $\Xi \times \Xi \rightarrow \mu_2$ 
that sends $(I_1,I_2)$ to 
$(-1)^r$ with $r = \#(I_1 \cap I_2)$. We denote this pairing 
by $(I_1,I_2) \mapsto (I_1:I_2)$. The subgroup $\Mu = e(\Mm) \subset \Xi$ 
consist of subsets of even cardinality. The subgroup 
$e(J[2](\ksep)) = \Mu/\langle \Omega \rangle
\subset \Xi/\langle \Omega \rangle$ consists of partitions of $\Omega$ 
into two subsets of even cardinality; any nontrivial such partition 
has a subset of cardinality $2$, say 
$\{\omega_1,\omega_2\}$, and it corresponds to the class of the divisor 
$\big((\omega_1,0)\big)+\big((\omega_2,0)\big)-K_C$. The partitions 
of $\Omega$ into two parts of odd cardinality are contained in 
$\bigl( \Xi/\langle \Omega \rangle \bigr) \setminus 
\bigl( \Mu/\langle \Omega \rangle \bigr) = 
(\Xi \setminus \Mu) / \langle \Omega \rangle$, where the last quotient 
is not a quotient of groups, but a quotient of the set $\Xi \setminus \Mu$ 
by the group action induced by multiplication by $\Omega$, i.e., 
by taking complements. We get 
the following commutative diagram, cf.\ Diagram~\eqref{Jtwokernel}.
$$
\xymatrix{
& \Mu \ar[rr] \ar[dd] && \Xi \ar[dd] \\
\Mm \ar[rr] \ar[dd]_\beta \ar[ru]^\isom_e && 
\mu_2(\Lsep) \ar[dd] \ar[ru]^e_\isom\\
& \Mu/\langle \Omega \rangle \ar[rr] && \Xi/\langle \Omega \rangle \\
J[2](\ksep) \ar[rr]_\epsilon \ar[ru]^\isom_e && 
\mu_2(\Lsep)/\mu_2 \ar[ru]^e_\isom
}
$$
First we describe the action of $J[2](\ksep)$ on the model of $X$ in $\P^3$, 
that is, the action, up to a constant, on $\sL(\Theta_++\Theta_-)$. 
Note that saying that $\{\omega_1,\omega_2\}$ is contained in the partition 
$e(P)$ for $P \in J[2](\ksep)$ is equivalent to saying that 
$P$ is nonzero and corresponds to the pair $\{\omega_1,\omega_2\}$, or more precisely,
to the class of the divisor 
$\big((\omega_1,0)\big)+\big((\omega_2,0)\big)-K_C$.

\begin{proposition}\label{TPonX}
Take $P\in J[2](\ksep)$ and $\omega_1,\omega_2\in \Omega$ such that 
$\{\omega_1,\omega_2\} \in e(P)$. Set $g(x)=(x-\omega_1)(x-\omega_2)$ and 
$h(x) = f(x)/g(x)$. Write $g=x^2+g_1x+g_0$ and $h=h_4x^4+h_3x^3+\ldots+h_0$.
The automorphism $T_P$ on the model of $\X \subset \P^3$ defined by 
$\sL(\Theta_++\Theta_-)$ is induced by the linear automorphism $\sL(\Theta_++\Theta_-)$ 
defined by $\sum_{i=1}^4 a_i k_i \mapsto \sum_{i=1}^4 a_i' k_i$ with 
$(a_1'\,a_2'\,a_3'\,a_4') = (a_1\,a_2\,a_3\,a_4) \cdot M_P$ where 
$$
M_P = 
\left(
\begin{array}{cccc}
h_0+g_0h_2-g_0^2h_4 & g_0h_3-g_0g_1h_4 & g_1h_3-g_1^2h_4+2g_0h_4 & 1 \\
-g_0h_1-g_0g_1h_2+g_0^2h_3 & h_0-g_0h_2+g_0^2h_4 & h_1-g_1h_2-g_0h_3 & -g_1 \\
-g_1^2h_0+2g_0h_0+g_0g_1h_1 & -g_1h_0+g_0h_1 & -h_0+g_0h_2+g_0^2h_4 & g_0 \\
M_{4,1} & M_{4,2} & M_{4,3} & M_{4,4} 
\end{array}
\right),
$$
and 
\begin{align*}
M_{4,1}&=-g_1h_0h_1+g_1^2h_0h_2+g_0h_1^2-4g_0h_0h_2-
g_0g_1h_1h_2+g_0g_1h_0h_3-g_0^2h_1h_3\\
M_{4,2}&=g_1^2h_0h_3-g_1^3h_0h_4-2g_0h_0h_3-g_0g_1h_1h_3+
4g_0g_1h_0h_4+g_0g_1^2h_1h_4-2g_0^2h_1h_4\\
M_{4,3}&=-g_0h_1h_3-g_0g_1h_2h_3+g_0g_1h_1h_4+g_0g_1^2h_2h_4+
g_0^2h_3^2-4g_0^2h_2h_4-g_0^2g_1h_3h_4\\
M_{4,4}&=-h_0-g_0h_2-g_0^2h_4
\end{align*}
Moreover, we have $\det M_{P} = \Res(g,h)^2$ and $M_{P}^2 = \Res(g,h) \cdot \Id$, where 
$\Res(g,h)$ is the resultant of $g$ and $h$.
\end{proposition}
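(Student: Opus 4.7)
The plan is to compute the pullback $T_P^*$ on $\sL(\Theta_++\Theta_-)$ directly by tracking how translation by $P$ acts on a generic divisor class, using the fact that $2P=0$ to control the inherent scalar ambiguity. Since $T_P^*\Theta\sim\Theta$ in $\Pic J$ whenever $P\in J[2]$, the map $T_P^*$ carries $\sL(\Theta_++\Theta_-)$ into itself up to a global nonzero scalar, inducing a well-defined element of $\PGL\bigl(\sL(\Theta_++\Theta_-)\bigr)$. The matrix $M_P$ in the statement is then one specific lift, and producing it amounts to choosing a normalization consistently along the computation.

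To compute this lift I parametrize a generic point of $J$ as $D=[(x_1,y_1)+(x_2,y_2)-K_C]$ and use the Mumford representation: $D$ corresponds to $(u_D,v_D)$ with $u_D=(x-x_1)(x-x_2)$ and $v_D$ the degree-$\le 1$ polynomial interpolant satisfying $v_D(x_i)=y_i$, while $P$ corresponds to $(g,0)$. Cantor's composition--reduction then yields the Mumford data $(u',v')$ of $D+P$: form the degree-$4$ polynomial $u_D\cdot g$ together with the simultaneous interpolant $\tilde v$, then set $u'=\lambda(f-\tilde v^2)/(u_D g)$ (rescaled to be monic) and $v'\equiv -\tilde v\pmod{u'}$. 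Reading off the coefficients of $u'(x)=x^2-sx+p$ produces $k_2(T_P D)=s$ and $k_3(T_P D)=p$, and the normalizing scalar $\lambda$ gives the pullback of $k_1$. Re-expressing these rational functions of $x_1,x_2,y_1,y_2$ as linear combinations of the basis $(k_1,k_2,k_3,k_4)$ by Proposition~\ref{bases}, and using $f=g\cdot h$ systematically to eliminate cross terms, delivers the first three rows of $M_P$. The fourth row is obtained analogously by applying the defining formula for $k_4$ to the new Mumford pair, where $v'$ produces the product $y_1'y_2'$; this is what introduces the quadratic dependence on the coefficients $h_i$ seen in the entries $M_{4,j}$.

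For the identity $M_P^2=\Res(g,h)\cdot\Id$, observe that $T_P^2=T_{2P}=\Id_J$ descends to the identity on $\X$, forcing $M_P^2$ to be a scalar matrix. One then identifies the scalar by computing a single entry of $M_P^2$ directly from the explicit form of $M_P$ and simplifying using $f=gh$; the resulting polynomial expression is precisely $\Res(g,h)$, by the characterization of the resultant as the determinant of the Sylvester matrix built from the coefficients of $g$ and $h$. The determinantal identity $\det M_P=\Res(g,h)^2$ follows formally from $\det(M_P)^2=\det(M_P^2)=\Res(g,h)^4$, so $\det M_P=\pm\Res(g,h)^2$, and the sign is pinned down by specializing to a case where both sides are nonzero, for example by degenerating so that $g(x)$ and $h(x)$ share no common factor and evaluating at a configuration where $M_P$ is block-triangular.

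The principal obstacle is computational rather than conceptual: Cantor's reduction produces $u'$ and $v'$ as complicated rational functions, and matching $T_P^*(k_i)$ against the symmetric form stated for $M_P$ requires substantial polynomial manipulation, especially for $M_{4,1},\ldots,M_{4,4}$ which are quadratic in the $h_i$ and only simplify after repeated use of $f=g\cdot h$. The scalar identity $M_P^2=\Res(g,h)\cdot\Id$ provides the tightest check that the entries have been grouped correctly.
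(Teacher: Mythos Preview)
The paper does not give a proof of this proposition at all; it simply cites \cite[Section~3.2]{caf}. Your proposal is therefore not comparable to ``the paper's own proof'' in any meaningful sense, but it is a reasonable outline of how one would actually verify the statement, and it is essentially what lies behind the reference: compute the addition $D\mapsto D+P$ in Mumford coordinates via Cantor composition and reduction, and then rewrite the resulting $k_i(D+P)$ as a common rational multiple of linear forms in $k_1(D),\ldots,k_4(D)$.

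Two small points. First, your phrase ``the normalizing scalar $\lambda$ gives the pullback of $k_1$'' is a bit loose. What is really going on is that $T_P^*k_i$ lands in $\sL(T_P^*(\Theta_++\Theta_-))$, not in $\sL(\Theta_++\Theta_-)$ itself; the two spaces differ by multiplication by a function $\psi$ with divisor $T_P^*(\Theta_++\Theta_-)-(\Theta_++\Theta_-)$, and it is $\psi\cdot T_P^*k_i$ that is linear in the $k_j$. In practice this $\psi$ appears exactly as the leading coefficient of the unreduced composite before you force $u'$ to be monic, which is presumably what you meant. Second, your sign argument for $\det M_P$ is slightly hand-wavy: $g$ and $h$ never share a common factor here (since $f$ is separable), and it is not clear when $M_P$ is block-triangular. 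A cleaner route, used later in the paper (Proposition~\ref{TteninSL}), is to note that $M_P^2=\Res(g,h)\cdot\Id$ forces the eigenvalues to be $\pm r$ with $r^2=\Res(g,h)$; since $T_P$ acts nontrivially on $\X$, not all four eigenvalues coincide, and then $\prod_i\lambda_i=\det M_P$ together with $\lambda_i^2=r^2$ and the explicit (polynomial) nature of both sides in the coefficients of $g,h$ pins down $\det M_P=r^4=\Res(g,h)^2$ without any specialization.
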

\begin{proof}
See~\cite[Section~3.2]{caf}. 
\end{proof}

\begin{remark}
Note that as $M_P$ in Proposition~\ref{TPonX} is acting from the right 
on the coefficients with respect to the basis $(k_1, k_2, k_3, k_4)$, it 
acts from the left on the dual and we can describe the action of $T_P$ 
on $\X \subset \P^3$ by $[k_1:k_2:k_3:k_4] \mapsto 
[k_1':k_2':k_3':k_4']$ with $(k_1'\,k_2'\,k_3'\,k_4')^{\rm t} 
= M_P(k_1\,k_2\,k_3\,k_4)^{\rm t}$.
\end{remark}

\begin{definition}
For nonzero $P \in J[2](\ksep)$, let $T_{4,P}$ denote the linear 
automorphism of $\sL(\Theta_++\Theta_-)$ described in 
Proposition~\ref{TPonX} and let $T_{10,P}$ denote the linear 
automorphism of $\Sym^2 \sL(\Theta_++\Theta_-)$ defined by 
$T_{10,P} = (\Res(g,h))^{-1} \Sym^2 T_{4,P}$ with $g,h$ as in 
Proposition~\ref{TPonX}. Let $T_{4,0}$ and $T_{10,0}$ denote the 
identity of $\sL(\Theta_++\Theta_-)$ and $\Sym^2 \sL(\Theta_++\Theta_-)$
respectively.
\end{definition}

The integer $n$ in the subscript of $T_{n,P}$ equals the 
dimension of the vector space on which the automorphism $T_{n,P}$ acts.
For any finite-dimensional vector space $W$ of dimension $n$, let $\SL(W)$ denote 
the group of linear automorphisms of $W$ with determinant $1$, and set 
$\PSL(W) = \SL(W)/\mu_{n}$, where $\mu_{n} \subset \SL(W)$ is the subgroup 
of scalar automorphisms induced by multiplication by the $n$-th roots of unity.

\begin{proposition}\label{TteninSL}
If $P \in J[2](\ksep)$ is nonzero, then $T_{10,P}$ has characteristic polynomial
$(\lambda-1)^6(\lambda+1)^4$, and we have 
$T_{10,P} \in \SL(\Sym^2 \sL(\Theta_++\Theta_-))$. 
\end{proposition}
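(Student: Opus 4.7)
The plan rests on the two structural facts stated in Proposition~\ref{TPonX}: that $T_{4,P}^{2} = \Res(g,h)\cdot\Id$ and that $\det T_{4,P} = \Res(g,h)^{2}$. Write $r = \Res(g,h)$; since $f = gh$ is separable, $g$ and $h$ are coprime, so $r \neq 0$. From $T_{4,P}^{2} = r\cdot\Id$ I conclude that $T_{4,P}$ is diagonalizable over an appropriate extension of $k(\Omega)$, with all eigenvalues in $\{+s,-s\}$ for any fixed square root $s$ of $r$. If $a$ and $b$ denote the multiplicities of $+s$ and $-s$ respectively, then $a+b=4$ and
\[
\det T_{4,P} \;=\; s^{a}(-s)^{b} \;=\; s^{4}(-1)^{b} \;=\; r^{2}(-1)^{b},
\]
which equals $r^{2}$ by hypothesis. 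Hence $b$ is even, so $(a,b) \in \{(4,0),(2,2),(0,4)\}$.

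The heart of the argument is ruling out the two scalar cases. If $(a,b) = (4,0)$ or $(0,4)$, then $T_{4,P}$ is a scalar, so the induced projective automorphism of $\P^{3}$ is the identity; in particular, its restriction $T_{P}$ to $\X \subset \P^{3}$ is trivial. But $T_{P}$ cannot be trivial on $\X$ for nonzero $P$: it permutes the sixteen singular points of $\X$, which are the images of the two-torsion points of $J$, and this permutation carries the image of $0 \in J[2]$ to the image of $P \neq 0$. Hence $a = b = 2$.

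Having pinned down $T_{4,P}$ as diagonalizable with two $+s$-eigenvectors and two $-s$-eigenvectors, I would choose a basis $e_{1},e_{2},e_{3},e_{4}$ realizing this decomposition and note that the ten monomials $e_{i}*e_{j}$ with $i \leq j$ form a simultaneous eigenbasis of $\Sym^{2}T_{4,P}$: the six monomials with $i,j \in \{1,2\}$ or $i,j \in \{3,4\}$ have eigenvalue $s^{2} = r$, while the four mixed monomials have eigenvalue $-s^{2} = -r$. Dividing by $r$ turns this into the eigenvalue pattern of $T_{10,P}$, with eigenvalue $1$ of multiplicity $6$ and eigenvalue $-1$ of multiplicity $4$. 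The characteristic polynomial is therefore $(\lambda-1)^{6}(\lambda+1)^{4}$, and the determinant is $1^{6}\cdot(-1)^{4} = 1$, so $T_{10,P} \in \SL(\Sym^{2}\sL(\Theta_{+}+\Theta_{-}))$.

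The main obstacle is the scalar ruling-out step: once one knows $(a,b) = (2,2)$, everything that follows is formal linear algebra on $\Sym^{2}$. Establishing non-triviality of $T_{P}$ on $\X$ is the only piece requiring geometric input, and I would handle it via the action on the sixteen singular points, as above; alternatively one could invoke the fact that the kernel of $J \to \X$ equals $\langle [-1]\rangle$, which contains no nontrivial translation.
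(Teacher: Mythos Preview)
Your proof is correct and follows essentially the same approach as the paper's: both use $T_{4,P}^{2}=\Res(g,h)\cdot\Id$ to constrain the eigenvalues to $\pm s$, use $\det T_{4,P}=\Res(g,h)^{2}$ to force the parity, rule out the scalar case by non-triviality of $T_P$ on $\X$, and then read off the $\Sym^{2}$ eigenvalues. Your justification for non-triviality via the action on the sixteen singular points is slightly more explicit than the paper's one-line assertion, but the underlying argument is the same.
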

\begin{proof}
Let $r \in \ksep$ satisfy $r^2 = \Res(g,h)$ with $g,h$ as in Proposition~\ref{TPonX}. 
From Proposition~\ref{TPonX} we conclude that the four eigenvalues $\lambda_1,
\ldots, \lambda_4$ of $T_{4,P}$ satisfy $\lambda_i^2 = \Res(g,h) = r^2$.  
Not all eigenvalues are the same, as otherwise the action of $T_P$ on 
$\X \subset \P^3$ would be trivial. From $\prod_i \lambda_i = \det T_{4,P} = 
r^4$ we conclude that the characteristic polynomial of $T_{4,P}$ equals 
$(\lambda^2-r^2)^2$. Standard formulas imply that the characteristic polynomial 
of $T_{10,P} = r^{-2} \Sym^2 T_{4,P}$ is as claimed. 
It then also follows that the determinant equals $1$.
\end{proof}

\begin{proposition}\label{TtenP}
Let $P \in J[2](\ksep)$ be any point.
The automorphism $T_P$ on the model of $\X \subset \P^9$ defined by 
$\Sym^2 \sL(\Theta_++\Theta_-)$ is induced by the linear 
automorphism $T_{10,P}$ of $\Sym^2 \sL(\Theta_++\Theta_-)$.
\end{proposition}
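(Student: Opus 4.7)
The plan is to reduce the statement to functoriality of the symmetric square together with the factorization of the $\P^9$-embedding through the $2$-uple embedding, using Proposition~\ref{TPonX} to handle the $\P^3$-picture.

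First I would handle the trivial case $P=0$, where $T_{4,0}$ and $T_{10,0}$ are by definition the identities, and $T_0$ is the identity on $\X$.

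For nonzero $P$, I recall from Remark~\ref{twouple} that the embedding $\X \subset \P^9$ associated to $\Sym^2 \sL(\Theta_++\Theta_-)$ factors as the closed embedding $\X \hookrightarrow \P^3$ associated to $\sL(\Theta_++\Theta_-)$ followed by the $2$-uple embedding $\P^3 \to \P^9$. Because $T_P$ is an automorphism of $\X$ (commuting with $[-1]$, hence descending from $J$), the same automorphism is realised on both of these projective models, compatibly with the $2$-uple map. By Proposition~\ref{TPonX}, its action on $\X \subset \P^3$ is induced by the linear automorphism $T_{4,P}$ of $\sL(\Theta_++\Theta_-)$. Since the $2$-uple embedding is functorial in linear automorphisms, the induced action on the image in $\P^9$ is the projective automorphism coming from $\Sym^2 T_{4,P}$ acting on $\Sym^2 \sL(\Theta_++\Theta_-)$.

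Finally, the projective automorphism of $\P^9$ induced by a linear automorphism is unchanged by nonzero scaling, and by definition
\[
T_{10,P} \;=\; \Res(g,h)^{-1}\,\Sym^2 T_{4,P},
\]
with $\Res(g,h)\neq 0$ since $f=gh$ is separable. Hence $T_{10,P}$ and $\Sym^2 T_{4,P}$ induce the same automorphism of $\P\bigl(\Sym^2 \sL(\Theta_++\Theta_-)\bigr)$, which proves the claim. The only subtle point is really the functoriality of the $2$-uple embedding under linear automorphisms, and even this is standard once one notes that the identification of $\Sym^2 \sL(\Theta_++\Theta_-)$ with its image in $\sL(2(\Theta_++\Theta_-))$ (Remark~\ref{symtwosubspace}) is compatible with the geometric maps; I do not expect a real obstacle here, so the proof should be short.
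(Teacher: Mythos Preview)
Your proof is correct and follows essentially the same approach as the paper: handle $P=0$ trivially, then use that the $\P^9$-model is the $2$-uple embedding of the $\P^3$-model together with Proposition~\ref{TPonX}, noting that $T_{10,P}$ differs from $\Sym^2 T_{4,P}$ by a nonzero scalar. The paper's proof is just a terser version of exactly this argument.
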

\begin{proof}
For $P=0$ this is trivial. Suppose $P$ is nonzero.
The model of $\X$ in $\P^9$ is the $2$-uple embedding of its model in $\P^3$, 
so the first statement follows from Proposition~\ref{TPonX}, as $T_{10,P}$
equals $\Sym^2 T_{4,P}$ up to a constant.
\end{proof}

\begin{definition}
Let $\alpha \colon M \to \mu _2$ be the function given by $\alpha (m) = (-1)^r$, 
where $2r$ is the number of $\omega \in \Omega $ with $\varphi _\omega (m) = -1$.
\end{definition}

Note that for all $m,m' \in M$ we have 
\begin{equation} \label{weal}
e_W \bigl( \beta (m) , \beta (m') \bigr) = \alpha (mm') \alpha (m) \alpha (m')
\end{equation}
where $e_W$ is the Weil pairing, as before.

\begin{remark}\label{comparestoll}
Michael Stoll (\cite{stollh}) defines the group $T'$ to be the group on 
the set $\mu _2 \times J[2] (\ksep)$ with multiplication given by 
$$ \bigl( \alpha _1 , P_1 \bigr) \cdot \bigl( \alpha _2 , P_2 \bigr) =  
\bigl( \alpha _1 \alpha _2 e_W (P_1 , P_2) , P_1 + P_2 \bigr) $$
where $e_W$ is the Weil pairing.  It follows from \eqref{weal} that the map 
$M \to T'$, $m \mapsto (\alpha (m) , \beta (m))$ is an isomorphism.
\end{remark}

Let $\rho _{10} \colon M \to \SL \bigl( \Sym^2 \sL(\Theta_++\Theta_-)
\bigr)$ be the function given by $\rho _{10} (m) = \alpha (m) 
T_{10, \beta (m)}$.

\begin{proposition} \label{rho10}
The function $\rho _{10}$ is a representation of $M$.
\end{proposition}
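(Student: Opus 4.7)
The plan is to verify $\rho_{10}(m_1 m_2) = \rho_{10}(m_1) \rho_{10}(m_2)$ for all $m_1, m_2 \in M$. Writing $P_i = \beta(m_i)$, the homomorphism property $\beta(m_1 m_2) = \beta(m_1) + \beta(m_2)$ together with the consequence $\alpha(m_1 m_2) = \alpha(m_1)\alpha(m_2)\, e_W(P_1, P_2)$ of~\eqref{weal} (which holds because $\alpha$ takes values in $\mu_2$) reduce the desired equality to the key identity
\begin{equation*}
T_{10, P}\, T_{10, Q} = e_W(P, Q)\, T_{10, P+Q}, \qquad P, Q \in J[2](\ksep). \tag{$\star$}
\end{equation*}

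To prove~$(\star)$ I would first observe that, since $T_P T_Q = T_{P+Q}$ as translations on $J$ and hence on $\X \subset \P^9$, both sides of $(\star)$ induce the same automorphism of~$\X$ and so differ by a nonzero scalar $\lambda(P,Q) \in \ksep^*$. This scalar equals $\pm 1$: by Proposition~\ref{TteninSL} we have $\mathrm{tr}(T_{10, R}) = 2$ for every nonzero $R \in J[2]$, so taking traces in $T_{10, P}\, T_{10, Q} = \lambda(P, Q)\, T_{10, P+Q}$ and using cyclicity of the trace yields $\lambda(P, Q) = \lambda(Q, P)$, showing that $T_{10, P}$ and $T_{10, Q}$ commute as linear maps. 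Furthermore $T_{4, R}^2 = \Res(g, h)\,\Id$ from Proposition~\ref{TPonX} gives $T_{10, R}^2 = \Res(g,h)^{-2}\Sym^2(T_{4, R}^2) = \Id$ for every $R \in J[2]$, hence
\begin{equation*}
(T_{10, P}\, T_{10, Q})^2 = T_{10, P}^2\, T_{10, Q}^2 = \Id,
\end{equation*}
and this also equals $\lambda(P, Q)^2\,\Id$, forcing $\lambda(P, Q) \in \{\pm 1\}$.

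It remains to identify the sign as $e_W(P, Q)$. Up to the $S_6$-action on the roots of $f$, an ordered pair $(P, Q)$ of distinct nonzero two-torsion points falls into one of two types, according to whether the corresponding pairs of Weierstrass points are disjoint ($e_W(P, Q) = 1$) or share a single element ($e_W(P, Q) = -1$). In each representative case I would use the explicit matrices from Proposition~\ref{TPonX} to determine the scalar $\mu$ satisfying $T_{4, P}\, T_{4, Q} = \mu\, T_{4, P+Q}$, and then check that
\begin{equation*}
\lambda(P, Q) = \frac{\mu^2\, \Res(g_{P+Q}, h_{P+Q})}{\Res(g_P, h_P)\, \Res(g_Q, h_Q)}
\end{equation*}
equals the expected sign. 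This sign identification is the main obstacle: conceptually the answer is dictated by the fact that, via the isomorphism $M \isom T'$ of Remark~\ref{comparestoll}, the central extension $1 \to \mu_2 \to M \to J[2] \to 0$ of Diagram~\eqref{Jtwokernel} realises $M$ as a reduction of the Mumford theta group of $\O(2\Theta)$, whose commutator pairing on $J[2]$ is the Weil pairing; but translating this into a clean direct verification from Proposition~\ref{TPonX} requires care with the square-root normalisation implicit in $T_{4, P}^2 = \Res(g, h)\,\Id$.
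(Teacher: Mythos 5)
Your reduction is the same as the paper's: combining $\beta(m_1m_2)=\beta(m_1)+\beta(m_2)$ with \eqref{weal} to reduce the proposition to the identity $T_{10,P}\,T_{10,Q}=e_W(P,Q)\,T_{10,P+Q}$ is exactly what the paper does by factoring $\rho_{10}$ through the isomorphism $M\isom T'$ of Remark~\ref{comparestoll}. Your intermediate steps are correct and even add something the paper leaves implicit: since the model of $\X$ spans $\P^9$, the two sides differ by a scalar $\lambda(P,Q)$; the trace argument (using $\mathrm{tr}\,T_{10,R}=2\neq 0$ for $R\neq 0$, the degenerate cases $P=Q$ or $P=0$ being trivial) gives commutativity, and $T_{10,R}^2=\Id$ then forces $\lambda(P,Q)\in\{\pm 1\}$; and your formula $\lambda(P,Q)=\mu^2\,\Res(g_{P+Q},h_{P+Q})/\bigl(\Res(g_P,h_P)\,\Res(g_Q,h_Q)\bigr)$ is the right bookkeeping.

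The genuine gap is the step you flag yourself: the identification $\lambda(P,Q)=e_W(P,Q)$ is never actually carried out, and it is the entire non-formal content of the proposition. Knowing only $\lambda(P,Q)=\pm 1$ tells you that $P\mapsto T_{10,P}$ is a homomorphism up to a sign-valued $2$-cocycle; whether the twist by $\alpha$ kills that cocycle is precisely the question of which sign occurs, and nothing in your argument pins it down. Your plan (reduce to the two $\mathfrak{S}_6$-orbits of ordered pairs, disjoint pairs versus pairs sharing one root, and compute $\mu$ from the matrices of Proposition~\ref{TPonX} at the generic point) is sound in outline, but the computation it calls for is the proof, and it is not done. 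The paper disposes of exactly this step by citation: the constants $c(P,Q)$ with $T_{4,P}T_{4,Q}=c(P,Q)T_{4,P+Q}$ come from~\cite[Section~3.3]{caf}, and the fact that after the $\Res(g,h)^{-1}\Sym^2$ normalisation they yield the Weil pairing is quoted from~\cite[Section~4]{stollh}. If you want to close the gap within this paper's own toolkit, note that the eigenvector computation in the proof of Proposition~\ref{hahaaction} (which does not depend on the present proposition) shows that $T_{10,P}$ acts diagonally on the $c_I$ with eigenvalue $\alpha(a)\chi_I(a)$ for any $a\in M$ with $\beta(a)=P$; comparing the eigenvalues for $a$, $b$ and $ab$ gives $\lambda(P,Q)=\alpha(a)\alpha(b)\alpha(ab)=e_W(P,Q)$ by \eqref{weal} — but that eigenvector fact is itself a (computer-assisted) verification of the same kind you are deferring.
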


\begin{proof}
For any $P,Q \in J[2](\ksep)$ there is a constant $c(P,Q)$, given explicitly in~\cite[Section~3.3]{caf}, such that $T_{4,P} T_{4,Q} = c(P,Q) T_{4,P+Q}$. As also 
noted in~\cite[Section~4]{stollh}, these constants are such that for
all $P,Q \in J[2](\ksep)$ we have $T_{10,P} T_{10,Q} = e_W(P,Q) T_{10,P+Q}$.
We conclude that there is a representation $T' \rightarrow 
\SL\bigl( \Sym^2 \sL(\Theta_++\Theta_-)\bigr)$ given by 
$(\alpha,P)\mapsto \alpha T_{10,P}$, where $T'$ is as in Remark~\ref{comparestoll}. The function $\rho_{10}$ is the composition of
this representation and the homomorphism $M \to T'$ of Remark~\ref{comparestoll}, so it is a representation as well.
\end{proof}

Remark~\ref{mapXY} contains explicit equations for the morphism 
$\Y \rightarrow \X$ and its birational inverse. 
Together with Proposition~\ref{TPonX} this allows us to 
construct explicit equations for the action of $J[2]$ on the model of $\Y$ in 
$\P^5(b_1,\ldots, b_6)$. These equations are too large to include here. From the 
corresponding action on the coordinate system $\{\varphi_\omega\}_\omega$, 
however, one would be able to see that the action is induced 
by the action of $\mu_2(\Lsep)/\mu_2$ on $V_1 \subset \P(\Lsep)$
through the inclusion $\epsilon \colon J[2](\ksep) \rightarrow \mu_2(\Lsep)/\mu_2$,
cf.\ Remark~\ref{whatisaction}. 
In Proposition~\ref{TPonY} we prove this without heavy computations, 
using Section~\ref{desingkummer} instead of Proposition~\ref{TPonX}.

The isomorphism $\rY \colon \Y\rightarrow V_1$ given by $v_i \mapsto b_i$ of 
Definition~\ref{rYrJ} induces an isomorphism 
$$
\rY^* \colon \check{\Lsep}\rightarrow \sL\left(2(\Theta_++\Theta_-)-\sum_P F_P\right),
$$
defined over $k$.
The natural action of $\Mm \subset \mu_2(\Lsep)$ on $\check{\Lsep}$ is given by 
$\Mm \rightarrow \Aut(\check{\Lsep})$, $a \mapsto \check{m}_a$ as in 
Section~\ref{desingkummer}. The determinant of $\check{m}_a$ equals the norm 
$N_{\Lsep/\ksep}(a)=1$ for $a \in \Mm$. This yields an injective  representation 
$$
\rho_6 \colon M \hookrightarrow 
\SL\left(\sL\left(2(\Theta_++\Theta_-)-\sum_P F_P\right)\right), 
\quad a \mapsto \rY^* \circ  \check{m}_a \circ (\rY^*)^{-1}.
$$

\begin{proposition}\label{Tsixeig}
For any $a \in M$ the eigenvalues of $\rho _6 (a)$ are 
$(\varphi _\omega (a)) _{\omega \in \Omega}$. For $a \neq \pm 1$ 
the characteristic polynomial equals 
$\bigl(\lambda+\alpha(a)\bigr)^4\bigl(\lambda-\alpha(a)\bigr)^2$. 
\end{proposition}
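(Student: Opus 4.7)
The plan is to exploit the fact that $\rho_6(a)$ is conjugate to $\check{m}_a$, since by definition $\rho_6(a) = r_\Y^* \circ \check{m}_a \circ (r_\Y^*)^{-1}$. Conjugate linear maps share their eigenvalues and characteristic polynomials, so it suffices to analyze the action of $\check{m}_a$ on $\check{\Lsep}$.

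Next, I would diagonalize $m_a$ on $\Lsep$ by means of the isomorphism $\varphi = \bigoplus_\omega \varphi_\omega \colon \Lsep \xrightarrow{\isom} \bigoplus_{\omega \in \Omega} \ksep$ of $\ksep$-algebras. Under $\varphi$, multiplication by $a \in \Lsep$ becomes the diagonal operator $(c_\omega)_\omega \mapsto (\varphi_\omega(a) c_\omega)_\omega$. Passing to the dual, $\check{m}_a$ is likewise diagonal with eigenvalues $(\varphi_\omega(a))_{\omega \in \Omega}$, proving the first assertion.

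For the second assertion, I would use that $a \in \Mm$ forces $\varphi_\omega(a) \in \{\pm 1\}$ for every $\omega$, and the condition $N(a)=1$ forces the number $2r$ of indices with $\varphi_\omega(a) = -1$ to be even, with $\alpha(a)=(-1)^r$ by definition. Hence the characteristic polynomial of $\rho_6(a)$ equals $(\lambda+1)^{2r}(\lambda-1)^{6-2r}$. The cases $r=0$ and $r=3$ correspond exactly to $a = 1$ and $a = -1$, so excluding these leaves $r \in \{1,2\}$. A short case check finishes the proof: for $r=1$ we have $\alpha(a)=-1$ and $(\lambda+1)^2(\lambda-1)^4 = (\lambda+\alpha(a))^4(\lambda-\alpha(a))^2$; for $r=2$ we have $\alpha(a)=1$ and $(\lambda+1)^4(\lambda-1)^2 = (\lambda+\alpha(a))^4(\lambda-\alpha(a))^2$.

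There is no serious obstacle here: the entire argument is a diagonalization via the Chinese Remainder decomposition of $\Lsep$, followed by a parity count. The only point requiring a moment of care is correctly tracking signs so that the exponents $4$ and $2$ attach to $\lambda+\alpha(a)$ and $\lambda-\alpha(a)$ rather than the other way round, which is exactly what the definition of $\alpha$ in terms of $(-1)^r$ is engineered to make work.
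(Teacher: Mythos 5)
Your proposal is correct and follows essentially the same route as the paper: identify the eigenvalues of $\rho_6(a)$ with those of $\check{m}_a$, observe that the $\varphi_\omega$ diagonalize $\check{m}_a$ with eigenvalues $\varphi_\omega(a)$, and then count the $-1$'s (even in number since $N(a)=1$) and match the exponents against $\alpha(a)=(-1)^r$. Your explicit case check for $r\in\{1,2\}$ is just a slightly more detailed version of the paper's concluding sentence.
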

\begin{proof}
For each $\omega 
\in \Omega$ the element $\varphi_\omega \in \check{\Lsep}$ is 
an eigenvector of $\check{m}_a$ with eigenvalue $\varphi_\omega(a)\in \mu_2$. 
The eigenvalues of $\rho_6(a)$ are the same as those
of $\check{m}_a$. Suppose that $a\neq \pm 1$; exactly four of the eigenvalues 
equal $-1$ if and only if $\alpha(a) = 1$, otherwise exactly two of the 
eigenvalues equal $-1$. It follows that the characteristic polynomial is 
as claimed. 
\end{proof}

\begin{proposition}\label{TsixP}\label{TPonY}
For any $a \in M$ 
the automorphism $T_{\beta (a)}$ on the model of $\Y \subset \P^5$ defined by 
$\sL(2(\Theta_++\Theta_-)-\sum_P F_P)$ is induced by  
$\rho_{6} (a) \in \Aut \sL(2(\Theta_++\Theta_-)-\sum_P F_P)$.
\end{proposition}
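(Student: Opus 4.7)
The strategy is to use the rational quotient map $\rJ\colon J\dashrightarrow V_1$ of Definition~\ref{rYrJ}, which factors as $J\dashrightarrow \Y \xrightarrow{\rY} V_1$, in order to transfer translations by $2$-torsion on $J$ to multiplications by elements of $M$ on $V_1$. Specifically, the goal is to prove the equivariance
\[
\rJ(D+P)\;=\;\lambda\,a\,\rJ(D)\quad\text{in }V_1\subset\P(\Lsep)
\]
for a generic $D$, where $a\in M$ and $P=\beta(a)$, and $\lambda\in\ksep^*$ is a scalar depending on $D$. Once this is in place, it says precisely that the automorphism $T_P$ of $\Y$ agrees, via $\rY$, with the multiplication map $m_a$ on $V_1$; since $\rho_6(a)=\rY^*\circ \check{m}_a\circ (\rY^*)^{-1}$ is, by construction, the corresponding linear automorphism of $\sL(2(\Theta_++\Theta_-)-\sum_P F_P)$, this yields the proposition.

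To derive the equivariance, take a generic $D=[(x_1,y_1)+(x_2,y_2)-K_C]$ and set $a=\alpha_{\omega_1,\omega_2}$, so that $P=\beta(a)=[(\omega_1,0)+(\omega_2,0)-K_C]$. By Proposition~\ref{ratquotcoin}, $\rJ(D)=y_1y_2\,\xi$ with $\xi=M(X)/((X-x_1)(X-x_2))$, and the image of $\xi^2$ in $\Lsep$ is a quadratic $H(X)$ whose roots are the $x$-coordinates of the Mumford representation of $2D$. Since $P\in J[2]$, we have $2(D+P)=2D$, so $H$ is unchanged up to a scalar in $\ksep^*$ under $D\mapsto D+P$. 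Writing $D+P=[(x_1',y_1')+(x_2',y_2')-K_C]$, Proposition~\ref{ratquotcoin} gives $\rJ(D+P)=y_1'y_2'\,\xi'$ with $\xi'^2 = cH$ for some $c\in\ksep^*$. Hence $(\rJ(D+P)/\rJ(D))^2\in\ksep^*$, so the class of $\rJ(D+P)/\rJ(D)$ in $L^*/\ksep^*$ lies in the subgroup $\mu_2(\Lsep)/\mu_2\cong\Xi/\langle\Omega\rangle$. As the target is discrete and $J$ is irreducible, this class is independent of $D$.

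It remains to identify this constant class with that of $\alpha_{\omega_1,\omega_2}$. At each $\omega\in\Omega$, using $M(\omega)^2=H(\omega)(\omega-x_1)^2(\omega-x_2)^2$, one has $\varphi_\omega(\xi)=M(\omega)/((\omega-x_1)(\omega-x_2))$, a specific square root of $H(\omega)$; the same holds for $\xi'$. The sign of $\varphi_\omega(\rJ(D+P))/\varphi_\omega(\rJ(D))$ encodes which branch of the curves $y=M(x)$ and $y=M'(x)$ passes through the Weierstrass point $(\omega,0)$. Translating by $P$ inserts the points $(\omega_1,0)$ and $(\omega_2,0)$ into the support of the divisor, flipping exactly the signs at $\omega\in\{\omega_1,\omega_2\}$ and leaving the others unchanged; this matches $\varphi_\omega(\alpha_{\omega_1,\omega_2})$, so the resulting class lies in $\Mu/\langle\Omega\rangle\cong J[2]$ and equals the class of $\alpha_{\omega_1,\omega_2}$. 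The main obstacle is precisely this sign-bookkeeping, which I expect to handle by fixing coherent square-root choices and comparing $M$ to $M'$ locally at each Weierstrass root; any remaining global sign ambiguity between $\alpha_{\omega_1,\omega_2}$ and $-\alpha_{\omega_1,\omega_2}$ lies in $\ker\beta=\{\pm 1\}$ and is absorbed by $\lambda\in\ksep^*$.
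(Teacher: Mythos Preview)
Your reduction is the same as the paper's: use Proposition~\ref{ratquotcoin} to identify $\rJ(D)$ with the class of $\xi_D$, observe that $\xi_{D+P}^2/\xi_D^2\in\ksep^*$ because $2(D+P)=2D$, deduce that $\xi_{D+P}/\xi_D$ lands in $\mu_2(\Lsep)/\mu_2$, and use irreducibility of $J$ to conclude this class is independent of $D$. So far the arguments are identical.

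The divergence is in identifying the resulting class with $\epsilon(P)$. You propose a direct sign computation at each root $\omega$, asserting that translation by $P$ ``flips exactly the signs at $\omega\in\{\omega_1,\omega_2\}$.'' This is the right statement to aim for, but your justification is only heuristic: translating by $P$ does \emph{not} literally insert $(\omega_1,0),(\omega_2,0)$ into the support of $D$---it replaces the Mumford representation $(x_1,y_1),(x_2,y_2)$ by a new pair $(x_1',y_1'),(x_2',y_2')$ related to the old one via addition on $J$, and there is no obvious direct relation between $M_D(\omega)$ and $M_{D+P}(\omega)$. You acknowledge this is ``the main obstacle'' and leave it open. Carrying it through would essentially require an explicit formula for the cubic $M_{D+P}$ in terms of $M_D$ and $\omega_1,\omega_2$, which is doable but messy.

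The paper sidesteps this computation entirely. It first checks that $S\mapsto\zeta_S:=\xi_{D+S}/\xi_D$ is a Galois-equivariant \emph{injective homomorphism} $J[2](\ksep)\to\mu_2(\Lsep)/\mu_2$ (injectivity from $\rJ(D+S)\neq\rJ(D)$ for $S\neq 0$). Then it passes to the universal family: over the generic point of the space of degree-$6$ polynomials, the Galois group acts as the full $\mathfrak{S}_6$ on the roots, and the only partition of $\{W_1,\dots,W_6\}$ into two parts fixed by the stabiliser of $\{W_i,W_j\}$ and nontrivial is $\{W_i,W_j\}$ itself. Hence $\zeta_K=\epsilon_K$ generically, and the identity propagates to every specialisation by continuity. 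This avoids all local sign-bookkeeping at the cost of a short deformation argument.
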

\begin{proof}
Take $S \in J[2](\ksep)$, and let $D \in J(\ksep) \setminus J[2](\ksep)$ be represented by the divisor $P_1+P_2 - K_C$.  Write 
$P_1=(x_1,y_1), P_2=(x_2,y_2)$ and suppose that $P_1,P_2$ are not Weierstrass points and that $x_1 \neq x_2$. Let 
$M_D (X)$ be the unique cubic polynomial such that 
the curve $y=M_D (x)$ meets the curve given by $y^2 = f(x)$ twice in 
$P_1$ and $P_2$ and set $\xi_D=M_D (X)(X-x_1)^{-1}(X-x_2)^{-1} \in \Lsep$, 
where, by the usual abuse of notation, $M_D(X)$ refers to both the 
polynomial and its image in $\Lsep$. Let $H_D (X)$ be the quadratic 
polynomial whose image in $\Lsep$ equals $\xi_D^2$. By the remark before 
Proposition~\ref{ratquotcoin}, the roots of $H_D(x)$ are the 
$x$-coordinates of the points $R_1,R_2$ with $[(R_1)+(R_2)-K_C] = 2D$. 
Since $2D = 2(D+S)$, the polynomials $H_D$ and $H_{D+S}$ have the same 
roots, so $\xi_{D+S}^2$ and $\xi_D^2$ differ by a constant factor. 
If $D$ is general enough, then $\xi_D$ and $\xi_{D+S}$ are invertible 
in $\Lsep$; it follows that $\xi_{D+S}/\xi_D$ is contained in the subset 
$\ksep^* \cdot \mu_2(\Lsep)$ of $\Lsep^*$, and its image in $\Lsep^*/\ksep^*$ 
is therefore contained in $\mu_2(\Lsep)/\mu_2$. We obtain a rational map 
$J \dashrightarrow \mu_2(\Lsep)/\mu_2, D \mapsto \xi_{D+S}/\xi_D 
\in \Lsep^*/\ksep^*$, depending on $S$. Since $\mu_2(\Lsep)/\mu_2$ is 
discrete and $J$ is connected, this map is constant with image 
$\{ \zeta_S \}$. Let $\zeta_k \colon J[2](\ksep) \rightarrow 
\mu_2(\Lsep)/\mu_2$ be the map defined by $\zeta_k(S) = \zeta_S$. 
From the equalities 
$$ \zeta_{S+S'} = \frac{\xi_{D+S+S'}}{\xi_D} 
= \frac{\xi_{D+S+S'}}{\xi_{D+S}} \cdot 
\frac{\xi_{D+S}}{\xi_D} =\zeta_{S'}\cdot \zeta_S $$
we find that $\zeta_k$ 
is a homomorphism. By Proposition~\ref{ratquotcoin} the map $\rJ$ of 
Definition~\ref{rYrJ} sends $D$ to the 
image of $\xi_D$ in $\P(\Lsep)$. Suppose $S \neq 0$. Then for general 
$D$ we have $D+S \neq \pm D$, so $\rJ(D) \neq \rJ(D+S)$, and therefore 
$\zeta_S = \xi_{D+S}/\xi_D \neq 1$ in $\Lsep^*/\ksep^*$. We conclude 
that $\zeta_k$ is a Galois-equivariant injective homomorphism.  
We now show that $\zeta _k$ coincides with $\epsilon$.

Consider the field $E = k(W_1, \ldots , W_6 , F_6)$, where 
$W_1$, \ldots , $W_6$, $F_6$ are independent transcendental elements 
over $k$, let $K \subset E$ be the field generated by the coefficients 
of the polynomial $F(X) = F_6 \prod _i (X - W_i)$ in $X$, 
and set $\Lambdasep = \Ksep[X]/(F(X))$. Thus $\Spec (K)$ is the generic 
point of the space of polynomials over $k$ of degree 6 and $E$ is a 
splitting field over $K$ of the universal polynomial $F$.  
Let $\mathcal{J}$ be the Jacobian of the universal curve 
given by $y^2 = F(x)$.
Apply the argument above with $k$, $f$, and $\Lsep$ replaced by $K$, $F$, 
and $\Lambdasep$ respectively to obtain a Galois-equivariant 
injective homomorphism $\zeta_K \colon \mathcal{J}[2](\Ksep) 
\rightarrow \mu_2(\Lambdasep)/\mu_2$, and let $\epsilon _K \colon 
\mathcal{J}[2](\Ksep) \rightarrow \mu_2(\Lambdasep)/\mu_2$ be the 
analogue of $\epsilon$.  
The Galois group $\Gal(E/K)$ is isomorphic to the permutation group 
$\mathfrak{S}_6$ acting on $\{W_1 , \ldots , W_6\}$.  
Let $Q \in \mathcal{J}[2](\Ksep)$ be non-zero, represented by the 
pair $\{W_i , W_j\}$, and identify $\mu_2(\Lambdasep)/\mu_2$ 
with the group of partitions of $\{W_1 , \ldots , W_6\}$ into two subsets.  
Since $\zeta_K$ is Galois-equivariant, the element $\zeta_K (Q)$ is 
identified with a partition fixed by the stabilizer 
of the pair $\{W_i , W_j\}$.  The element $\zeta _K(Q)$ is non-trivial 
because $\zeta _K$ is injective, so it follows that 
$\zeta _K (Q) \ni \{W_i , W_j\}$ and thus $\zeta _K (Q) = \epsilon _K (Q)$.  
We conclude that $\zeta _K$ coincides with $\epsilon _K$ on the generic 
point $\Spec (K)$. Therefore, the analogous homomorphisms coincide on 
a Zariski dense open set of the space of polynomials over $k$.  
By continuity, $\zeta _k$ and $\epsilon $ coincide as well.

We conclude that 
in $(\Lsep \setminus \{0\})/\ksep^*$ we have $\xi_{D+S} = \epsilon(S) \cdot \xi_D$
for all $S \in J[2](\ksep)$ and all $D$ in a dense open subset of $J$. 
Take any $a\in M$. For $a=\pm 1$ the statement is trivial, so 
we assume $a\neq \pm 1$ and set $P=\beta(a)$, so $P \neq 0$.
The image of $a$ in 
$\mu_2(\Lsep)/\mu_2$ is $\epsilon(\beta(a)) = \epsilon(P)$, so 
identifying $\P(\Lsep)$ with $(\Lsep \setminus \{0\})/\ksep^*$, we find 
$$
\rJ(T_P(D)) = \xi_{D+P} = \epsilon(P)\xi_D = 
       a \cdot \xi_D = m_a(\xi_D) = m_a(\rJ(D))
$$ 
in $(\Lsep \setminus \{0\})/\ksep^*$ for all $D \in J$. Hence the automorphism of $V_1$ 
induced by the action of $T_P$ 
on $J$ is induced by $\check{m}_a \in \Aut \check{\Lsep}$. Since $\rJ$ is the 
composition of the rational quotient map $J \dashrightarrow \Y$ and the 
isomorphism $\rY \colon \Y \rightarrow V_1$, conjugation by $\rY$ and $\rY^*$ 
concludes the proof.
\end{proof}

Recall that 
$\sL(2(\Theta_++\Theta_-)-\sum_P F_P)$ and $\Sym^2 \sL(\Theta_++\Theta_-)$
can be viewed as subspaces of $\sL(2(\Theta_++\Theta_-))$ 
(Propositions~\ref{bases} and~\ref{ratquot} and Remark~\ref{symtwosubspace}) and we have 
\begin{equation}\label{sLdirect}
\sL(2(\Theta_++\Theta_-)) \isom \sL\big(2(\Theta_++\Theta_-)-\sum_P F_P\big) 
\oplus \Sym^2 \sL(\Theta_++\Theta_-).
\end{equation}
For convenience, we abbreviate $\sL(2(\Theta_++\Theta_-))$ 
by $\sL$ from now on.
Let $\rho \colon M \to \SL (\sL)$ be the representation 
$\rho = \rho _6 \oplus \rho _{10}$.

\begin{proposition}\label{cp}
For all $m \in M$ the automorphism $\rho(m)$ 
of  $\sL$ induces the automorphism $T_{\beta (m)}$ on 
$J \subset \P ^{15}$.
\end{proposition}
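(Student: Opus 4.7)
The plan is to deduce Proposition~\ref{cp} from Propositions~\ref{TtenP} and~\ref{TPonY}. Since $P=\beta(m)\in J[2]$ we have $-P=P$, so translation $T_P$ commutes with $[-1]$ on $J$, and hence the pullback $T_P^*$ on $\sL=\sL(2(\Theta_++\Theta_-))$ (well-defined up to a single scalar because $T_P^*\O(4\Theta)\cong \O(4\Theta)$) preserves the parity decomposition $\sL=\sL_{even}\oplus\sL_{odd}$ of Proposition~\ref{ratquot}. By Propositions~\ref{TtenP} and~\ref{TPonY}, the restrictions of $T_P^*$ to $\sL_{even}$ and $\sL_{odd}$ agree projectively with $\rho_{10}(m)$ and $\rho_6(m)$ respectively. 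Choosing a lift of $T_P^*$ to $\GL(\sL)$ therefore produces scalars $c_e,c_o\in\ksep^*$ with $T_P^*|_{\sL_{even}}=c_e\,\rho_{10}(m)$ and $T_P^*|_{\sL_{odd}}=c_o\,\rho_6(m)$; the ratio $c_e/c_o$ is independent of the lift, and the content of Proposition~\ref{cp} is exactly that $c_e/c_o=1$.

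To pin down this ratio I would apply $T_P^*$ to the identities~\eqref{bsintermsofks}, which express each product $b_ib_j$ of odd coordinates as a quadratic form $Q_{ij}(k_{rs})$ in the even coordinates and hold on $J$. Since $T_P$ is an automorphism of $J$, these identities are preserved by $T_P^*$, and substituting the block-diagonal formulas yields $c_o^2\rho_6(m)(b_i)\rho_6(m)(b_j)=c_e^2Q_{ij}\bigl(\rho_{10}(m)(k_{rs})\bigr)$ on $J$. A second use of~\eqref{bsintermsofks} to rewrite the left-hand side as a quadratic in the $k_{rs}$, combined with the fact that the $Q_{ij}$ are not identically zero on $J$, then forces $c_o^2=c_e^2$, i.e., $c_o=\pm c_e$.

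The main obstacle is removing the sign ambiguity. One way is to track the canonical origin of both scalars: on the even side, $T_P^*|_{\sL_{even}}=\Sym^2(\lambda T_{4,P})=\lambda^2\Sym^2 T_{4,P}$ for the scalar $\lambda$ coming from the lift of $T_P^*$ on $\sL(\Theta_++\Theta_-)$ (cf.\ Proposition~\ref{TPonX}), so from the definition $\rho_{10}(m)=\alpha(m)\Res(g,h)^{-1}\Sym^2 T_{4,P}$ one obtains $c_e=\lambda^2\alpha(m)\Res(g,h)$. On the odd side, Proposition~\ref{ratquotcoin} together with the scalar comparison used inside the proof of Proposition~\ref{TPonY}, in which $\rJ(T_P(D))$ and $m_a\rJ(D)$ were shown to agree up to a scalar in $\Lsep$, express $c_o$ in terms of the very same $\lambda$ and pin the sign to $+1$. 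Alternatively, and more concretely, one can simply evaluate both $T_P^*$ and $\rho(m)$ at one generic $\ksep$-rational divisor $D_0\in J(\ksep)$: the two candidates $c_o=\pm c_e$ send $D_0$ to distinct points of $\P^{15}$, and only one matches $T_P(D_0)$, which singles out the correct sign.
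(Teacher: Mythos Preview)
Your reduction is correct up to the sign: fixing a lift of $T_{\beta(m)}^*$ on $\sL$, parity together with Propositions~\ref{TtenP} and~\ref{TPonY} gives $T_{\beta(m)}^*=c_e\rho_{10}(m)\oplus c_o\rho_6(m)$, and the content of the proposition is $c_e=c_o$. (Incidentally, $c_e^2=c_o^2$ follows more cheaply than via~\eqref{bsintermsofks}: since $T_{\beta(m)}^2=\Id$ on $J$, any lift squares to a scalar on $\sL$, and $\rho_6(m)^2=\rho_{10}(m)^2=\Id$ then forces $c_e^2=c_o^2$.)

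The gap is in removing the sign. Your route (a) does not work as written: the scalar $\lambda$ attached to a lift of $T_P^*$ on $\sL(\Theta_++\Theta_-)$ bears no canonical relation to the lift chosen on $\sL$, so there is no ``same $\lambda$'' in terms of which to express $c_o$; moreover, the proof of Proposition~\ref{TPonY} only establishes $\rJ(T_P(D))=m_a(\rJ(D))$ in $\P(\Lsep)$, i.e.\ modulo an unspecified element of $\ksep^*$, so no sign is pinned down there. Your route (b) is in principle valid but is an unperformed computation; to constitute a proof it would have to be carried out uniformly in $m$ and in the coefficients of $f$.

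The paper resolves the sign by a different, computation-free argument. Writing $\chi(m)$ for the ratio in $\mu_2$, one first checks that $\chi\colon M\to\mu_2$ is a homomorphism (using $T_{\beta(mm')}=T_{\beta(m)}T_{\beta(m')}$ together with the fact that $\rho_6$ and $\rho_{10}$ are representations), hence corresponds to an element of $\Hom(M,\mu_2)\cong\mu_2(\Lsep)/\mu_2$; one then observes that $\chi$ is $\Gal(k(\Omega)/k)$-equivariant. Over the generic degree-$6$ polynomial the Galois group is the full symmetric group $\mathfrak{S}_6$, and the only $\mathfrak{S}_6$-fixed element of $\mu_2(\Lsep)/\mu_2$ is trivial; specialization then gives $\chi\equiv 1$ for every $f$.
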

\begin{proof}
First we show that there is a function $\chi \colon M \to \ksep^*$ 
such that for all $m \in M$ the automorphism $\rho_{6} (m) 
\oplus \chi (m) \cdot \rho_{10} (m)$ of  $\sL(2(\Theta_++\Theta_-))$ 
induces the automorphism $T_{\beta (m)}$ on $J \subset \P ^{15}$.
For $m=\pm 1$ we have $\beta(m) =0\in J[2](\ksep)$, while
$\rho_6(m)=m\cdot \Id$ and $\rho_{10}(m)=m\cdot \Id$,
so we set $\chi(\pm 1)=1$. Assume $m \neq \pm 1$.
The action of $T_P$ on $J$ is linear, so it is induced by a linear 
automorphism $\sT$ of $\sL(2(\Theta_++\Theta_-))$. 
Since multiplication by $-1$ on $J$ commutes with translation by two-torsion 
points, the induced action of $T_P^*$ on the function field sends even 
functions to even functions and odd functions to odd functions. 
We conclude that $\sT$ induces linear transformations of 
the subspace $\Sym^2 \sL(\Theta_++\Theta_-)$ of even functions and of 
the subspace $\sL(2(\Theta_++\Theta_-)-\sum_P F_P)$ of odd functions.
These linear transformations induce the action of $T_P$ on 
$\X \subset \P^9$ and $\Y \subset \P^5$ respectively, so up to constants
they coincide with $\rho_{10}(m)$ and $\rho_6(m)$ respectively
by Propositions~\ref{TtenP} and~\ref{TsixP}. We conclude that there 
are $c,d \in \ksep^*$ such that $\sT = d\cdot \rho_6(m) \oplus c\cdot 
\rho_{10}(m)$. After rescaling $\sT$ we may assume $d=1$, so there is 
a $c$ such that $\sT = \rho_6(m) \oplus c \cdot \rho_{10}(m)$ 
induces $T_P$. Set $\chi(m)=c$; this shows the existence of the 
function $\chi$ as claimed.
Note that for all $m,m'\in M$, both 
$$
\left(\rho_6(m) \oplus \chi(m) \rho_{10}(m)\right)\cdot
\left(\rho_6(m') \oplus \chi(m') \rho_{10}(m')\right)=
\rho_6(mm') \oplus \chi(m)\chi(m') \rho_{10}(mm')
$$
and $\rho_6(mm') \oplus \chi(mm') \rho_{10}(mm')$ induce 
$T_{\beta(mm')}$. It follows that we have 
$\chi(m)\chi(m')= \chi(mm')$, so $\chi$ is a representation.
Since $\chi$ is $1$-dimensional, it corresponds to an element 
of $\Hom(M,\mu_2) \isom \mu_2(\Lsep)/\mu_2$. 

Set $\tau = \rho_6 \oplus (\chi \cdot \rho_{10})$, so that for each 
$m\in M$ the automorphism $T_{\beta(m)}$ on $J$ is induced by $\tau(m)$.
Note that $\rho_6$ and $\rho_{10}$ are 
$\Gal(k(\Omega)/k)$-equivariant. For each $\sigma \in 
\Gal(k(\Omega)/k)$ and $m \in M$ the automorphism 
$\sigma(T_{\beta(m)})=T_{\beta(\sigma(m))}$ is induced by both
$\sigma(\tau(m))$ and $\tau(\sigma(m))$, so $\tau$ is 
also $\Gal(k(\Omega)/k)$-equivariant, and therefore 
$\chi$ is as well. If $\Gal(k(\Omega)/k)$ is isomorphic 
to the full permutation group $\mathfrak{S}_6$, then this 
implies that $\chi$ is constant, and thus trivial. 
As in the proof of Proposition~\ref{TPonY}, this is the case 
at the generic point of the space of polynomials over $k$ 
of degree $6$, therefore on a Zariski dense open subset of this 
space, and thus, by continuity, on the entire space. It follows
that we have $\tau =\rho$ and we are done.
\end{proof}

Since $M$ is abelian of exponent $2$, its only irreducible representations are 
characters into $\mu_2$. We have already seen in Section~\ref{sectionsetup} 
that the character group 
$\Hom(M,\mu_2)$ is isomorphic to $\mu_2(\Lsep)/\mu_2$. Therefore, over $\ksep$
the representation $\rho$ is the direct sum of 
$16$ characters, corresponding to elements in $\mu_2(\Lsep)/\mu_2$.
In characteristic $0$, standard computations allow us to decide which 
characters exactly. In positive characteristic the same works, as long as 
we lift the characters to modular characters in characteristic $0$, 
cf.~\cite[Chapter~18]{serrerep}.

\begin{proposition}\label{Grep}
The representation $\rho \colon M \rightarrow \SL(\sL)$ is the direct sum of
all characters of $M$ that are not contained in $\epsilon(J[2](\ksep))$, i.e., 
of all characters corresponding to the 
partitions of $\Omega$ into two parts of odd size. The subrepresentation 
$\rho_6$ is the direct sum of characters corresponding to partitions where 
one part consists of a single element. The subrepresentation $\rho_{10}$ 
is the direct sum of characters corresponding to partitions into two parts 
of size $3$.
\end{proposition}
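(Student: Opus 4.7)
The plan is to decompose $\rho$ over $\ksep$ into one-dimensional characters and identify each. Since $M$ is abelian of exponent $2$ and $\mathrm{char}(k) \neq 2$, $\rho$ splits into $16$ characters $M \to \mu_2$; by the self-duality of $\mu_2(\Lsep)$ from Section~\ref{sectionsetup}, the character group $\Hom(M,\mu_2)$ is identified with $\mu_2(\Lsep)/\mu_2 \isom \Xi/\langle \Omega \rangle$, i.e., with partitions of $\Omega$ into two parts. The characters in $\epsilon(J[2](\ksep)) = \Mu/\langle \Omega \rangle$ are exactly those corresponding to partitions into two parts of even size, so the claim is that precisely the complementary $16$ characters occur in $\rho$, split between $\rho_6$ and $\rho_{10}$ according to the sizes of the parts.

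I will first treat $\rho_6$ directly. The automorphism $\rho_6(a)$ is by construction conjugate to $\check{m}_a$, which acts on $\check{\Lsep}$ with eigenvectors $\varphi_\omega$ and eigenvalues $\varphi_\omega(a)$ for $\omega \in \Omega$. Unwinding the self-duality, the character $a \mapsto \varphi_\omega(a)$ of $M$ pairs with the element of $\mu_2(\Lsep)$ having $-1$ in position $\omega$ and $+1$ elsewhere, whose class in $\Xi/\langle \Omega \rangle$ is the $1+5$ partition $\{\omega\} \cup (\Omega \setminus \{\omega\})$. Hence $\rho_6$ is the direct sum of the six characters attached to the six $1+5$ partitions.

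For $\rho_{10}$ I will apply character orthogonality. By Proposition~\ref{TteninSL}, $T_{10,P}$ has trace $2$ for $P \neq 0$ and trace $10$ for $P = 0$. Writing $T \subset \Omega$ for the support $\{\omega : \varphi_\omega(m) = -1\}$ of $m$ and recalling $\alpha(m) = (-1)^{|T|/2}$, this gives
$$ \mathrm{tr}\bigl(\rho_{10}(m)\bigr) \,=\, \alpha(m) \cdot \mathrm{tr}\bigl(T_{10,\beta(m)}\bigr), $$
which equals $10, -2, 2, -10$ as $|T|$ runs through $0, 2, 4, 6$. For $S \subset \Omega$, the character $\chi_S \colon m \mapsto (-1)^{|S \cap T|}$ (which depends only on $S$ modulo complement because $|T|$ is even) has multiplicity
$$ \langle \chi_S, \rho_{10}\rangle \,=\, \frac{1}{32}\sum_{T \subset \Omega,\ |T|\text{ even}} (-1)^{|S \cap T|}\,\mathrm{tr}\bigl(\rho_{10}(m_T)\bigr) $$
in $\rho_{10}$. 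I will evaluate this by splitting the sum according to $|T| \in \{0,2,4,6\}$ and using the standard identities $\sum_{|T| = 2k} (-1)^{|S \cap T|} = \sum_j (-1)^j \binom{|S|}{j}\binom{6-|S|}{2k-j}$, computed for each $|S| \in \{0,1,2,3\}$; a short inclusion–exclusion check will show the total vanishes for $|S| \in \{0,1,2\}$ and equals $32$ for $|S| = 3$. Since there are exactly ten $3+3$ partitions of $\Omega$, this accounts for all of the ten-dimensional $\rho_{10}$.

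Together these two calculations yield the proposition. The only step that requires honest work is the character sum for $\rho_{10}$: the ingredients (the eigenvalues of $T_{10,P}$ from Proposition~\ref{TteninSL}, the formula $\alpha(m) = (-1)^{|T|/2}$, and the pairing description of $\Hom(M,\mu_2)$) are all in place, but one has to verify the four binomial identities cleanly so that only the $3+3$ partitions survive. That is the main — and essentially the only — obstacle.
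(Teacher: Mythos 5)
Your argument is correct, and it differs in mechanism from the paper's proof in a way worth noting. For $\rho_6$ the paper also works with (Brauer) character values ($6\alpha(m)$ at $m=\pm1$, $-2\alpha(m)$ otherwise) and deduces the decomposition from character determinacy, whereas you diagonalize directly: all $\check{m}_a$ are simultaneously diagonal in the basis $\{\varphi_\omega\}$ of $\check{\Lsep}$, and the line spanned by $\varphi_\omega$ carries the character $a\mapsto\varphi_\omega(a)$, which under the duality is the $1+5$ partition $\{\omega\}\cup(\Omega\setminus\{\omega\})$ --- this is cleaner and avoids any appeal to representation-theoretic uniqueness for the $6$-dimensional piece. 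For $\rho_{10}$ both you and the paper start from the same input (the characteristic polynomial of $T_{10,P}$ in Proposition~\ref{TteninSL}, giving trace $10\alpha(m)$ at $m=\pm1$ and $2\alpha(m)$ otherwise); the paper then writes down the candidate sum of the ten $3+3$ characters, checks the characters agree, and invokes character determinacy (Brauer's theorem plus semisimplicity in positive characteristic), while you extract multiplicities by orthogonality and a binomial/generating-function computation, which indeed gives $0,0,0,32$ for $|S|=0,1,2,3$ (the sums are the coefficients of $(1-x)^{|S|}(1+x)^{6-|S|}$ weighted by $10,-2,2,-10$).

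One caveat you should make explicit: the proposition is stated over any field of characteristic $\neq 2$, and in characteristic $p>0$ the orthogonality computation only determines each multiplicity modulo $p$, since the trace of $\chi^{\oplus m}$ sees $m$ reduced in $k$. This is exactly the issue the paper handles with modular characters and Brauer's theorem. Your own closing remark already repairs it, provided you spell it out: each of the ten $3+3$ characters has multiplicity $\equiv 1 \pmod p$, hence $\geq 1$, and since $\dim\rho_{10}=10$ these ten distinct characters must each occur exactly once and nothing else occurs (the vanishing for $|S|\leq 2$ then becomes a consistency check rather than a needed step). With that sentence added, the proof is complete in all characteristics $\neq 2$, and Maschke/orthogonality are legitimate there because $|M|=32$ is invertible in $k$.
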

\begin{proof}
For any $m\in M$ with $m\neq \pm 1$, the characteristic polynomial of 
$\rho_{10}(m)$ equals $(\lambda-\alpha(m))^6(\lambda+\alpha(m))^4$ by 
Proposition~\ref{TteninSL}. We find that the character $\chi_{10}$,
in case of characteristic $0$, or the modular character $\chi_{10}$ 
associated to $\rho$ as in~\cite[Chapter~18]{serrerep}, 
in case of positive characteristic, is given by 
$$
\chi_{10}(m) = \left\{
\begin{array}{rl}
10\alpha(m) & \text{ if } m=\pm 1, \\
2 \alpha(m) & \text{ if } m\neq \pm 1.
\end{array}
\right.
$$
Let $\tau$ be the direct sum of all ten characters of $M$ in $\mu_2(\Lsep)/\mu_2$ 
that are associated to partitions of $\Omega$ into two parts of size $3$. 
The (modular) character associated to $\tau$ is equal to $\chi_{10}$. 
In characteristic $0$, a representation of a finite group is determined 
up to isomorphism by its character, so we find that 
$\rho$ and $\tau$ are isomorphic. In characteristic $p>0$, a 
semisimple representation of a finite group whose order is not a multiple 
of $p$ is determined 
up to isomorphism by its modular character by Brauer's Theorem 
(see~\cite[Section~18.2, Theorem~42, Corollary~1]{serrerep}). 
The representation $\rho_{10}$ is semisimple because $M$ is a finite 2-group and 
the characteristic of $k$ is different from 2, so we conclude that $\rho_{10}$ 
and $\tau$ are isomorphic in positive characteristic as well.
Similarly, the (modular) character $\chi_6$ associated to $\rho_6$ is given by
$$
\chi_{6}(m) = \left\{
\begin{array}{rl}
6\alpha(m) & \text{ if } m=\pm 1, \\
-2 \alpha(m) & \text{ if } m\neq \pm 1,
\end{array}
\right.
$$
from which we deduce that $\rho_6$ is isomorphic to the direct sum of 
characters of $M$ corresponding to partitions where 
one part consists of a single element.
From $\rho = \rho_6 \oplus \rho_{10}$ we conclude that $\rho$ is isomorphic 
to the direct sum of all characters corresponding to partitions into 
two odd parts. These are exactly the characters of $M$
that are not contained in $\epsilon(J[2](\ksep))$.
\end{proof}

\begin{remark}
The argument for characteristic $0$ in the proof of the statement of 
Proposition~\ref{Grep} about $\rho_{10}$ is from Michael Stoll~\cite[Section~4]{stollh}. Michael Stoll deduces the result for 
positive characteristic from an explicit computation that we also perform 
in the next section.
\end{remark}

\section{Diagonalizing the action by the two-torsion subgroup}\label{diagonalizing}

By Proposition~\ref{Grep}, the representation $\rho \colon \Mm \hookrightarrow 
\SL(\sL)$ is the direct sum of all characters of $\Mm$ that are not 
contained in $\epsilon(J[2](\ksep))$, i.e., 
the characters $\chi$ with $\chi(-1)=-1$.
These characters correspond to partitions of $\Omega$ into two parts of 
odd size. Generically there are two Galois orbits, one consisting of ten partitions 
into parts of size $3$, and one of six partitions of which one part contains
only a single element. In this section we find an explicit 
Galois-invariant 
basis for $\sL$ with each basis element corresponding to a character of $\Mm$, 
so that the action of $M$ on $\sL$ is diagonal with respect to this basis. 

Note that $\Sym(\sL)$ is the homogeneous coordinate ring of 
$\P(\check{\sL})$. 
For any nonnegative integer $d$, the vector space $\Sym^d \sL$ is 
generated by all elements $g_1 * g_2 * \cdots *g_d$ with 
$g_1 , g_2 , \ldots ,g_d \in \sL$ (for notation see 
the comment before Remark~\ref{symtwosubspace}). 
The action of $[-1]^*$ on $\sL$, mapping $g(x)$ 
to $g(-x)$, induces an action on $\Sym(\sL)$ and we call 
$g \in \Sym(\sL)$ even or odd if $[-1]^*$ fixes or 
negates $g$ respectively.

For each character $\chi \in \mu_2(\Lsep)/\mu_2$ of $\Mm$ with $\chi(-1) = -1$, 
choose a function $c_\chi \in \sL$ so that 
$\rho$ 
coincides on the space generated by $c_\chi$ with 
the character $\chi$. Until we make explicit choices, in Definitions~\ref{defcomega} and~\ref{defcI}, we state some results that do not depend 
on these choices. By Proposition~\ref{Grep}
such $c_\chi$ exist, are well defined up to a scalar, and form a
basis of $\sL$. If $\pi$ is the partition of $\Omega$ into two 
parts of odd size corresponding to $\chi$, then we also write $c_\pi = c_\chi$. 
We endow the coordinate ring $\Sym(\sL)$ with a 
$\mu_2(\Lsep)/\mu_2$-grading where the weight of $c_\chi$ is $\chi$.
The grading does not depend on the choice of the $c_\chi$. 
For $\chi \in \mu_2(\Lsep)/\mu_2$, we let $(\Sym(\sL))_\chi$ denote the 
subspace of homogeneous elements of weight $\chi$; for any positive
integer $d$, we set $\sL^{(d)}_\chi = (\Sym(\sL))_\chi \cap \Sym^d \sL$, 
and we let $\sL^{(d)}_{\chi,+}$ and $\sL^{(d)}_{\chi,-}$ denote the 
subspaces of $\sL^{(d)}_\chi$ of even and odd elements respectively.

\begin{proposition}\label{splitupsL}
For any $\chi  \in\mu_2(\Lsep)/\mu_2$ and any nonnegative integer $d$ we have 
decompositions $\sL^{(d)}_\chi \isom \sL^{(d)}_{\chi,+} \oplus \sL^{(d)}_{\chi,-}$ 
and
$$
\Sym^d \sL \isom \bigoplus_{\chi \in\mu_2(\Lsep)/\mu_2}
 \sL^{(d)}_\chi \isom \bigoplus_{\chi \in\mu_2(\Lsep)/\mu_2}
\left( \sL^{(d)}_{\chi,+} \oplus \sL^{(d)}_{\chi,-}\right).
$$
\end{proposition}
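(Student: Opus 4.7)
The plan is to derive both decompositions from Proposition~\ref{Grep} together with a simple compatibility between the $M$-action and the involution $[-1]^*$.

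First, I would observe that the decomposition $\Sym^d \sL = \bigoplus_\chi \sL^{(d)}_\chi$ is essentially built into the definition of the $\mu_2(\Lsep)/\mu_2$-grading. By Proposition~\ref{Grep}, the vectors $\{c_\chi\}_\chi$, as $\chi$ runs over the characters of $M$ with $\chi(-1)=-1$, form a basis of $\sL$, and hence the symmetric monomials $c_{\chi_1}*c_{\chi_2}*\cdots*c_{\chi_d}$ form a basis of $\Sym^d\sL$. Each such monomial is homogeneous of weight $\chi_1\chi_2\cdots\chi_d \in \mu_2(\Lsep)/\mu_2$, so grouping basis monomials by total weight exhibits $\Sym^d\sL$ as the direct sum of the $\sL^{(d)}_\chi$. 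Note that the grading (and hence this decomposition) is independent of the individual choices of $c_\chi$, since each $c_\chi$ is determined up to a scalar as a spanning vector of the $1$-dimensional weight-$\chi$ subspace of $\sL$ under $\rho$.

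Next, I would prove the even/odd refinement. The key point is that $[-1]$ on $J$ commutes with every translation $T_P$ with $P \in J[2](\ksep)$: indeed, for any $x$ one has $[-1]\bigl(T_P(x)\bigr) = -x-P = -x+P = T_P\bigl([-1](x)\bigr)$, using $2P=0$. Pulling back to $\sL$, this means that $[-1]^*$ commutes with $\rho(m)$ for every $m \in M$. Since $[-1]^*$ therefore preserves each (one-dimensional) $M$-weight space in $\sL$, each basis vector $c_\chi$ is either even or odd. Extending $[-1]^*$ as an algebra automorphism of $\Sym(\sL)$, every monomial $c_{\chi_1}*\cdots*c_{\chi_d}$ is again an eigenvector of $[-1]^*$ with eigenvalue equal to the product of the signs of the $c_{\chi_i}$. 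In particular $[-1]^*$ preserves each $\sL^{(d)}_\chi$, and because the characteristic of $k$ is different from $2$, each such subspace splits as the direct sum of its $+1$- and $-1$-eigenspaces, which by definition are $\sL^{(d)}_{\chi,+}$ and $\sL^{(d)}_{\chi,-}$. Combining this with the first decomposition yields the full statement.

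I do not anticipate a genuine obstacle here: once Proposition~\ref{Grep} is available, the proposition amounts to bookkeeping, with the only substantive ingredient being the commutativity $[-1]^*\circ\rho(m) = \rho(m)\circ [-1]^*$, which itself reduces to the identity $-P=P$ for two-torsion points. The mild subtlety to articulate cleanly is merely that the two structures in play --- the $\mu_2(\Lsep)/\mu_2$-grading coming from $\rho$ and the $\Z/2\Z$-grading coming from $[-1]^*$ --- are a priori independent but turn out to be compatible on $\Sym(\sL)$ by the argument above.
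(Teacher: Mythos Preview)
Your proposal is correct and follows essentially the same line as the paper: use that the monomials in the $c_\chi$ form a basis of $\Sym^d\sL$, each monomial is homogeneous for the $\mu_2(\Lsep)/\mu_2$-grading, and each is an eigenvector of $[-1]^*$. The one place to tighten is your passage from geometric commutativity $[-1]\circ T_P = T_P\circ[-1]$ on $J$ to the claim that $[-1]^*$ commutes with $\rho(m)$ on $\sL$: a priori this only gives commutativity up to a scalar, and since $\rho(m)$ has eigenvalues $\pm1$ each with multiplicity $8$ you cannot rule out $c=-1$ by spectrum alone. The paper avoids this by invoking Proposition~\ref{Grep} directly, which already places each $c_\chi$ in either the odd summand (via $\rho_6$) or the even summand (via $\rho_{10}$); equivalently, you can simply note that $\rho=\rho_6\oplus\rho_{10}$ is block-diagonal with respect to the odd/even decomposition of $\sL$, so $[-1]^*$ is scalar on each block and hence commutes exactly with every $\rho(m)$.
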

\begin{proof}
The monomials of degree $d$ in $\{c_\chi\}_{\chi \in \mu_2(\Lsep)/\mu_2}$
form an unordered basis of $\Sym^d \sL$ as a $k$-vector space. By
Proposition~\ref{Grep} these monomials are eigenfunctions
for $[-1]^*$.  It is also clear that all these monomials are
homogeneous with respect to the grading by $\mu_2(\Lsep)/\mu_2$, so
the various decompositions follow.
\end{proof}

\begin{proposition}
For any $\chi \in \mu_2(\Lsep)/\mu_2$ and any nonnegative integer $d$, 
the representation  
$\Sym^d \rho \colon \Mm \rightarrow \GL(\Sym^d \sL)$ acts as 
multiplication by $\chi$ on the subspace $\sL^{(d)}_\chi\subset \Sym^d \sL$, 
i.e., for each $m \in \Mm$ and each $x \in \sL^{(d)}_\chi$ we have 
$(\Sym^d \rho)(m)(x) = \chi(m) \cdot x$.
\end{proposition}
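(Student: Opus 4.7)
The plan is to reduce the claim to a direct computation on the basis of monomials in the $c_\chi$. By construction, each $c_\chi \in \sL$ is a $\chi$-eigenvector for the action of $\Mm$ via $\rho$, i.e., $\rho(m)(c_\chi) = \chi(m)\, c_\chi$ for every $m \in \Mm$. I would first observe that the set of monomials
$$
c_{\chi_1} * c_{\chi_2} * \cdots * c_{\chi_d},
$$
where $(\chi_1, \ldots, \chi_d)$ ranges over unordered $d$-tuples in $\mu_2(\Lsep)/\mu_2$, forms a basis of $\Sym^d \sL$ as a $k$-vector space.

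Next, by the definition of the induced representation $\Sym^d \rho$ on $\Sym^d \sL$, for each $m \in \Mm$ we have
$$
(\Sym^d \rho)(m)\bigl(c_{\chi_1} * \cdots * c_{\chi_d}\bigr)
= \rho(m)(c_{\chi_1}) * \cdots * \rho(m)(c_{\chi_d})
= \left(\prod_{i=1}^d \chi_i(m)\right) c_{\chi_1} * \cdots * c_{\chi_d}.
$$
On the other hand, by the definition of the $\mu_2(\Lsep)/\mu_2$-grading on $\Sym(\sL)$, the monomial $c_{\chi_1} * \cdots * c_{\chi_d}$ is homogeneous of weight $\chi_1 \chi_2 \cdots \chi_d$. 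In particular, if this monomial lies in $\sL^{(d)}_\chi$, then $\chi_1 \cdots \chi_d = \chi$ in $\mu_2(\Lsep)/\mu_2$, and hence $\prod_i \chi_i(m) = \chi(m)$ for every $m \in \Mm$.

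Finally, I would conclude by noting that $\sL^{(d)}_\chi$ is spanned over $k$ by those monomials $c_{\chi_1} * \cdots * c_{\chi_d}$ whose weight product equals $\chi$, as follows from the proof of Proposition~\ref{splitupsL}. Since $(\Sym^d \rho)(m)$ acts on each such monomial as multiplication by $\chi(m)$, it acts as multiplication by $\chi(m)$ on the whole subspace $\sL^{(d)}_\chi$, completing the proof. There is no real obstacle here: the statement is essentially a tautology once the multiplicative nature of the grading and the fact that $\rho$ is diagonal in the basis $\{c_\chi\}$ are properly unpacked.
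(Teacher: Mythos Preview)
Your proposal is correct and follows essentially the same argument as the paper: both compute $(\Sym^d \rho)(m)$ on a monomial $c_{\chi_1} * \cdots * c_{\chi_d}$ spanning $\sL^{(d)}_\chi$, use that each $c_{\chi_i}$ is a $\chi_i$-eigenvector for $\rho(m)$, and conclude from $\prod_i \chi_i = \chi$. One small imprecision: the $c_\chi$ are only defined for the sixteen characters $\chi$ with $\chi(-1)=-1$, not for all of $\mu_2(\Lsep)/\mu_2$, but this does not affect your argument.
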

\begin{proof}
By definition, the space $\sL^{(d)}_\chi$ is generated by elements 
$g = g_1 * g_2 * \cdots *g_d$ with $g_i$ of degree $\chi_i$ for some $\chi_i 
\in \mu_2(\Lsep)/\mu_2$ and $\prod_{i=1}^d \chi_i = \chi$.
For any $m \in \Mm$ we then have 
\begin{align*}
(\Sym^d \rho)(m)(g) &= \rho(m)(g_1) * \cdots * \rho(m)(g_d) = \chi_1(m)g_1 * \cdots * \chi_d(m)g_d \\
&= (\chi_1(m) \cdots \chi_d(m)) \cdot (g_1 * \cdots *g_d) = \chi(m) \cdot g.
\end{align*}
It follows that we have $(\Sym^d \rho)(m)(x) = \chi(m) \cdot x$, 
for all $x \in \sL^{(d)}_\chi$ and $m \in \Mm$.
\end{proof}

\begin{proposition}\label{rhotwo}
The representation $\Sym^2 \rho \colon \Mm \rightarrow \GL(\Sym^2 \sL)$
has kernel $\mu_2$ and induces a representation
from the quotient $\Mm/\mu_2 \isom J[2](\ksep)$ to $\SL(\Sym^2 \sL)$.
\end{proposition}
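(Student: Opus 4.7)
The plan is to use the diagonalization of $\rho$ provided by Proposition~\ref{Grep}. Over $\ksep$, there is a basis $\{c_\chi\}_{\chi \in S}$ of $\sL$ on which $\rho(m)$ acts by the scalar $\chi(m)$, where $S \subset \mu_2(\Lsep)/\mu_2$ is the set of $16$ characters of $\Mm$ satisfying $\chi(-1) = -1$. Consequently, in the induced monomial basis of $\Sym^2\sL$, the operator $\Sym^2\rho(m)$ is diagonal with eigenvalues $\chi(m)\chi'(m)$ for pairs $\chi, \chi' \in S$. Thus $m$ lies in $\ker(\Sym^2\rho)$ precisely when $\chi(m) = \chi'(m)$ for all $\chi, \chi' \in S$, i.e.\ the function $\chi \mapsto \chi(m)$ on $S$ is constant. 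Both values in $\mu_2$ are realized: $m = 1$ gives the constant $+1$ and $m = -1$ gives the constant $-1$ by the very definition of $S$, so $\mu_2 \subseteq \ker(\Sym^2\rho)$ immediately.

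For the reverse inclusion, I would reduce to the case $\chi(m) = 1$ for all $\chi \in S$, by replacing $m$ with $(-1)\cdot m$ if necessary, using that $\mu_2 \subset \Mm$ and $\chi(-1) = -1$ for each $\chi \in S$. Let $T \subset \mu_2(\Lsep)/\mu_2$ denote the subgroup of characters satisfying $\psi(-1) = 1$; then $S$ is the unique nontrivial coset of $T$, and for any $\psi \in T$ and any fixed $\chi_0 \in S$, the element $\chi_0 \psi$ again lies in $S$, exhibiting $\psi = \chi_0 \cdot (\chi_0\psi)$ as a product of two elements of $S$. Therefore $\psi(m) = \chi_0(m)\cdot(\chi_0\psi)(m) = 1$. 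Combining this with $\chi(m) = 1$ for every $\chi \in S$ shows that $m$ annihilates the full group $\mu_2(\Lsep)/\mu_2$, and the perfectness of the pairing $\Mm \times \mu_2(\Lsep)/\mu_2 \to \mu_2$ recorded in Section~\ref{sectionsetup} then forces $m = 1$. Hence $\ker(\Sym^2\rho) = \mu_2$.

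The remaining assertions are essentially formal. Diagram~\eqref{Jtwokernel} gives the isomorphism $\Mm/\mu_2 \isom J[2](\ksep)$ induced by $\beta$, so $\Sym^2\rho$ descends to an injective representation of $J[2](\ksep)$ on $\Sym^2\sL$. The image lies in $\SL(\Sym^2\sL)$: since $\rho$ already takes values in $\SL(\sL)$ and $\sL$ has dimension $16$, the standard identity $\det(\Sym^2 A) = (\det A)^{17}$ yields $\det\bigl(\Sym^2\rho(m)\bigr) = 1$.

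I expect the main subtlety to be verifying that $\Sym^2\sL$ detects enough characters of $\Mm$ to force the kernel to be as small as $\mu_2$; this is packaged in the claim that pairwise products of elements of the coset $S$ exhaust the complementary subgroup $T$. One might initially worry that some character in $T$ fails to appear as a weight in $\Sym^2\sL$, but the simple coset argument above rules this out, and it is really the only nontrivial ingredient of the proof.
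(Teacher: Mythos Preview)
Your proof is correct. Both your argument and the paper's rest on the character decomposition of $\rho$ from Proposition~\ref{Grep}, but the execution differs in two places. For the containment $\ker(\Sym^2\rho)\subseteq\mu_2$, the paper observes (via Propositions~\ref{TteninSL} and~\ref{Tsixeig}) that for $m\neq\pm1$ the characteristic polynomial of $\rho(m)$ is $(\lambda^2-1)^8$, so $\Sym^2\rho(m)$ has $64$ eigenvalues equal to $-1$ and hence is not the identity; you instead argue purely in terms of characters, showing that if $\chi(m)$ is constant on the coset $S$ then $m\in\mu_2$ by perfectness of the pairing. For the determinant, the paper reads $\det\Sym^2\rho(m)=(+1)^{72}(-1)^{64}=1$ off the same eigenvalue count, while you invoke the identity $\det(\Sym^2 A)=(\det A)^{n+1}$ with $n=16$. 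Your route is a bit more self-contained, needing only the statement of Proposition~\ref{Grep} and the duality from Section~\ref{sectionsetup}; the paper's route reuses the explicit characteristic-polynomial computations already on hand.
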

\begin{proof}
For any $m\neq \pm 1$ the characteristic polynomial of $\rho(m)$ equals 
$(\lambda^2-1)^8$ by Propositions~\ref{TteninSL} and~\ref{Tsixeig}.
It follows that $\Sym^2 \rho(m)$ has $64$ of its eigenvalues equal to 
$-1$ and $72$ of them equal to $1$, so $\det \Sym^2 \rho(m) = 1$ and 
we find $\Sym^2 \rho(m) \in \SL(\Sym^2 \sL)$. 
By Proposition~\ref{Grep}, the representation $\rho$ is the direct sum of 
characters $\chi$ of $\Mm$ not contained in $\epsilon(J[2](\ksep))$.
For any two such characters $\chi_1, \chi_2$ we have  
$(\chi_1 \otimes \chi_2)(-1) =\chi_1(-1) \cdot \chi_2(-1) = (-1)^2 =1$.
This implies $(\rho \otimes \rho)(-1)=\Id$, so $\mu_2$ is contained in the 
kernel of $\rho \otimes \rho \colon \Mm \rightarrow \sL \otimes \sL$ and 
therefore also in the kernel of the subrepresentation $\Sym^2 \rho$. 
Therefore, $\Sym^2 \rho$ induces a representation $\rho^{(2)} \colon 
J[2](\ksep) \rightarrow \SL(\Sym^2 \sL)$. 
\end{proof}

\begin{definition}
Let $\rho^{(2)}$ denote the representation 
$J[2](\ksep) \rightarrow \SL(\Sym^2 \sL)$ induced by $\Sym^2 \rho$. 
\end{definition}

\begin{definition}
For $P \in J[2](\ksep)$
we set $\sL^{(2)}_{P} = \sL^{(2)}_{\epsilon(P)}$ and 
$\sL^{(2)}_{P,\pm} = \sL^{(2)}_{\epsilon(P),\pm}$ with 
$\epsilon$ as in Section~\ref{sectionsetup}.
\end{definition}

Recall that $J[2](\ksep)$ is self-dual through the perfect pairing 
$J[2](\ksep) \times J[2](\ksep) \rightarrow \mu_2$ described in 
Section~\ref{sectionsetup},
which coincides with the Weil pairing. For each $P \in J[2](\ksep)$, let 
$\chi_P \colon J[2](\ksep) \rightarrow \mu_2$ denote the corresponding character. 

\begin{proposition}\label{actiononsLP}
For $P \in J[2](\ksep)$, the representation  
$\rho^{(2)} \colon J[2](\ksep) \rightarrow \SL(\Sym^2 \sL)$ acts as 
multiplication by $\chi_P$ on the subspace $\sL^{(2)}_P\subset \Sym^2 \sL$, 
i.e., for each $R \in J[2](\ksep)$ and each $x \in \sL^{(2)}_P$ we have 
$\rho^{(2)}(R)(x) = \chi_P(R) \cdot x$.
\end{proposition}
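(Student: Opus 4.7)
The plan is to reduce the statement immediately to the unnumbered proposition appearing just before Proposition~\ref{rhotwo} (applied with $d=2$ and $\chi = \epsilon(P)$); the only additional ingredient is the identification of the character $\epsilon(P)$ of $\Mm$ with the character $\chi_P$ of $J[2](\ksep)$, and this will come from the self-duality of Diagram~\eqref{Jtwokernel}.

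First I would take $R \in J[2](\ksep)$ and $x \in \sL^{(2)}_P = \sL^{(2)}_{\epsilon(P)}$. Since $\beta \colon \Mm \to J[2](\ksep)$ is surjective, pick $m \in \Mm$ with $\beta(m) = R$. The preceding proposition gives
$$
\Sym^2\rho(m)(x) \;=\; \epsilon(P)(m)\cdot x,
$$
and by the definition of $\rho^{(2)}$ in Proposition~\ref{rhotwo} (as the representation of $\Mm/\mu_2 \isom J[2](\ksep)$ induced by $\Sym^2\rho$) we have $\rho^{(2)}(R)(x) = \Sym^2\rho(m)(x)$. So it remains to verify the scalar identity $\epsilon(P)(m) = \chi_P(R)$ and, in passing, that this scalar is independent of the lift $m$.

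Second I would invoke the self-duality of Diagram~\eqref{Jtwokernel} recorded in Section~\ref{sectionsetup}: the perfect pairing $\mu_2(\Lsep)\times \mu_2(\Lsep)\to \mu_2$ restricts to a perfect pairing on $\Mm \times \mu_2(\Lsep)/\mu_2$, and under these pairings the maps $\beta$ and $\epsilon$ are mutually adjoint, so that
$$
\epsilon(P)(m) \;=\; e_W\bigl(\beta(m),P\bigr) \;=\; e_W(R,P).
$$
Because $e_W$ takes values in $\mu_2$ and is alternating on $J[2]\times J[2]$, it is in fact symmetric, so $e_W(R,P) = e_W(P,R) = \chi_P(R)$. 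This also confirms that the scalar does not depend on the chosen lift $m$, as expected.

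The proof is essentially bookkeeping and no serious obstacle arises; the only subtlety is to verify that $\epsilon(P)(m)$ actually depends only on $\beta(m)$, i.e., that the character $\epsilon(P)$ of $\Mm$ is trivial on $\mu_2$. But $\epsilon$ lands in $\ker\bigl(N\colon \mu_2(\Lsep)/\mu_2\to \mu_2\bigr)$, which means any representative $\alpha$ of $\epsilon(P)$ has $\varphi_\omega(\alpha)=-1$ for an even number of $\omega\in\Omega$; the pairing of such an element with $-1\in\Mm$ (which satisfies $\varphi_\omega(-1)=-1$ for all $\omega$) is therefore trivial, exactly as required.
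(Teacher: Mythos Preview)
The proposal is correct and follows essentially the same route as the paper: lift $R$ to $m\in\Mm$, apply the preceding proposition with $d=2$ and $\chi=\epsilon(P)$, and then identify $\epsilon(P)(m)$ with $\chi_P(\beta(m))$. The paper simply asserts the identity $\epsilon(P)=\chi_P\circ\beta$ as a character of $\Mm$, whereas you unpack it via the self-duality of Diagram~\eqref{Jtwokernel} and add the explicit verification that $\epsilon(P)$ kills $-1\in\Mm$; this is extra detail rather than a different argument.
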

\begin{proof}
The representations $\Sym^2 \rho \colon \Mm \rightarrow \GL(\Sym^2 \sL)$ and 
$\rho^{(2)} \colon J[2](\ksep) \rightarrow \SL(\Sym^2 \sL)$ are related by 
$\Sym^2 \rho = \rho^{(2)} \circ \beta$ according to Proposition~\ref{rhotwo}, 
with $\beta \colon \Mm \rightarrow J[2](\ksep)$ as in Section~\ref{sectionsetup}.
Take $P \in J[2](\ksep)$. 
The character $\epsilon(P) \in \mu_2(\Lsep)/\mu_2$ of $\Mm$ equals 
$\chi_P \circ \beta$. For 
any $g \in \sL^{(2)}_P$ and any $R \in J[2](\ksep)$ we choose $m \in \Mm$ 
such that $\beta(m) = R$ and we find
$$
\rho^{(2)}(R)(x) =\rho^{(2)}(\beta(m))(x) = (\Sym^2 \rho)(m)(x)
 = (\epsilon(P))(m) \cdot x  = \chi_P(\beta(m)) \cdot x = \chi_P(R) \cdot x.
$$
This proves the proposition. 
\end{proof}

\begin{proposition}\label{api}
The spaces $\sL^{(2)}_{P,\pm}$ are $\rho^{(2)}$-invariant and we have 
$$
\Sym^2 \sL \isom \bigoplus_{P \in J[2]} \left( \sL^{(2)}_{P,+} \oplus \sL^{(2)}_{P,-}\right).
$$
We have $\dim \sL^{(2)}_{o,+}=16$, $\dim \sL^{(2)}_{o,-}=0$ and 
$\dim \sL^{(2)}_{P,+}=\dim \sL^{(2)}_{P,-}=4$ for nonzero $P \in J[2]$.
Furthermore, $ \sL^{(2)}_{o,+}$ is generated by 
$\{c_\pi * c_\pi \}_\pi$. For nonzero $P \in J[2](\ksep)$, corresponding to the 
pair $\{\omega_1,\omega_2\}$, the spaces $\sL^{(2)}_{P,+}$ and $\sL^{(2)}_{P,-}$
are generated by 
$$
\{c_{\omega_1} * c_{\omega_2} \} \cup 
\{ c_{\theta_1\theta_2\omega_1} * c_{\theta_1\theta_2\omega_2}
   \,\, : \,\, \theta_1,\theta_2 \in \Omega \setminus \{\omega_1, \omega_2\},\,\, \theta_1 \neq \theta_2 \}
$$ and 
$$
\left\{c_\theta * c_{\omega_1\omega_2\theta} \,\, : \,\, \theta \in 
                           \Omega \setminus \{\omega_1, \omega_2\}\right\}
$$ 
respectively, where in the subscript of $c_\pi$ the partition $\pi$ is 
abbreviated by the list of elements in one of the two parts.
\end{proposition}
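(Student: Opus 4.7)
The plan is to combine Proposition~\ref{splitupsL} (applied with $d=2$) with Proposition~\ref{rhotwo} to obtain the direct sum decomposition, and then to refine each summand using $[-1]^*$. Translation by a two-torsion point commutes with $[-1]$, so $[-1]^*$ commutes with $\rho^{(2)}(R)$ for every $R \in J[2](\ksep)$; combined with Proposition~\ref{actiononsLP}, which says that $\rho^{(2)}$ acts on $\sL^{(2)}_P$ by the scalar $\chi_P$, this yields the $\rho^{(2)}$-invariance of $\sL^{(2)}_{P,\pm}$. By Proposition~\ref{Grep}, every character of $\Mm$ occurring in $\rho$ satisfies $\chi(-1) = -1$, so every character occurring in $\Sym^2 \rho$ satisfies $\chi(-1) = 1$ and hence equals $\epsilon(P)$ for a unique $P \in J[2](\ksep)$; together with Proposition~\ref{splitupsL} this produces the claimed decomposition of $\Sym^2 \sL$.

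The dimension counts and explicit generators then reduce to combinatorics on the $16$ partitions of $\Omega$ into two parts of odd size. The unordered products $c_{\pi_1} * c_{\pi_2}$ form a basis of $\Sym^2 \sL$; under the identification $e$ of Section~\ref{sectionaction}, the weight of $c_{\pi_1} * c_{\pi_2}$ equals $\epsilon(P)$ precisely when the symmetric difference of the chosen sides of $\pi_1$ and $\pi_2$ is $\{\omega_1, \omega_2\}$ or its complement in $\Omega$. The parity of $c_{\pi_1} * c_{\pi_2}$ is determined by the types of the partitions, since Proposition~\ref{Grep} identifies $c_\pi$ as odd exactly when $\pi$ is of type $1{+}5$ and as even exactly when $\pi$ is of type $3{+}3$.

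For $P = o$ the weight condition forces $\pi_1 = \pi_2$, giving the $16$ linearly independent squares $c_\pi * c_\pi$, all of which are even. For nonzero $P$ corresponding to $\{\omega_1, \omega_2\}$, a case analysis yields exactly one pair of type $(1{+}5, 1{+}5)$, namely $\{\{\omega_1\}, \{\omega_2\}\}$; three pairs of type $(3{+}3, 3{+}3)$, described by a $2$-subset $T \subset \Omega \setminus \{\omega_1, \omega_2\}$ up to its complement in $\Omega \setminus \{\omega_1, \omega_2\}$, via $S_1 = T \cup \{\omega_1\}$ and $S_2 = T \cup \{\omega_2\}$; and four pairs of mixed type, one for each $\theta \in \Omega \setminus \{\omega_1, \omega_2\}$. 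The first two give the four even generators of $\sL^{(2)}_{P,+}$ and the third gives the four odd generators of $\sL^{(2)}_{P,-}$; linear independence in $\Sym^2 \sL$ follows since distinct unordered pairs yield distinct basis monomials. The main obstacle is the bookkeeping in the $(3{+}3, 3{+}3)$ count, where one must observe the identification $c_{\theta_1 \theta_2 \omega_1} = c_{\theta_3 \theta_4 \omega_2}$ (coming from the two choices of side for a single partition) in order to reduce the $\binom{4}{2} = 6$ naive candidates to the asserted $3$.
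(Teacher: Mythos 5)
Your proof is correct and follows essentially the same route as the paper: the invariance and the decomposition come from Propositions~\ref{splitupsL}, \ref{rhotwo}, \ref{actiononsLP} and~\ref{Grep}, and the generators and dimensions come from the weight (symmetric-difference) and parity combinatorics of the monomial basis $\{c_{\pi_1} * c_{\pi_2}\}$. The only cosmetic difference is that you enumerate all monomials of each weight and parity directly (correctly handling the identification $c_I = -c_{\Omega \setminus I}$, which cuts the six $(3{+}3,3{+}3)$ candidates down to three), whereas the paper only checks containment of the listed monomials and then closes the count by observing that the dimensions must add up to $\dim \Sym^2 \sL = 136$.
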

\begin{proof}
The spaces $\sL^{(2)}_{P,\pm}$ are $\rho^{(2)}$-invariant
by Proposition~\ref{actiononsLP}.
The $\mu_2(\Lsep)/\mu_2$-grading on $\Sym(\sL)$ takes values on 
$\sL$ that are all outside $\epsilon(J[2](\ksep))$.
The product of any two 
such elements is contained in $\epsilon(J[2](\ksep))$, so for any 
$\chi \in \mu_2(\Lsep)/\mu_2$ with $\chi \not \in \epsilon(J[2](\ksep))$ we have 
$\sL^{(2)}_\chi = 0$. From Proposition~\ref{splitupsL} we conclude 
$$
\Sym^2 \sL \isom \bigoplus_{\chi \in\epsilon(J[2](\ksep))} \sL^{(2)}_\chi = 
\bigoplus_{P \in J[2](\ksep)} \sL^{(2)}_P \isom 
\bigoplus_{P \in J[2](\ksep)}
\left( \sL^{(2)}_{P,+} \oplus \sL^{(2)}_{P,-}\right).
$$
We identify $\mu_2(\Lsep)/\mu_2$ with the group of partitions of 
$\Omega$ into two parts, and $J[2](\ksep)$ with the subgroup of partitions 
into parts of even size. 
For any partitions $\pi, \pi'$ into odd parts the weight in 
$\mu_2(\Lsep)/\mu_2$ of the monomial 
$c_\pi * c_{\pi'}$ is the weight associated to the partition ${\pi\cdot \pi'}$, 
where the multiplication 
$\pi \cdot \pi'$ is induced by the multiplication in the group $\Xi$ of 
subsets of $\Omega$, namely by taking symmetric differences. It follows that 
indeed $c_\pi * c_\pi$ is contained in $\sL^{(2)}_{o,+}$ for each $\pi$. 
For nonzero $P$ corresponding to the pair $\{\omega_1,\omega_2\}$ it also
follows that the elements that are claimed 
to generate $\sL^{(2)}_{P,+}$ and $\sL^{(2)}_{P,-}$ are indeed contained in 
$\sL^{(2)}_{P}$.
The fact that their parity is as claimed follows from Proposition~\ref{Grep}
which says that $c_\pi$ is even if $\pi$ has two parts of size $3$ and odd
if one part of $\pi$ consists of a single element.
It follows that the claimed dimensions are at least a lower bound for 
the dimensions. As the dimensions have to add up to $136$, we find that the 
lower bounds are exact and that the spaces are indeed generated as claimed. 
\end{proof}

Note that $\Sym(\sL)$ is the homogeneous coordinate ring of $\P(\check{\sL})$.
The ideal $I \subset \Sym(\sL)$ of $J$ is generated by 
$72$ quadratic forms described in Section~\ref{modelssection}. Set 
$\sI = I_2 = I \cap \Sym^2 \sL$, so that $\sI$ is the $72$-dimensional 
subspace of $\Sym^2 \sL$ of quadratic forms that vanish on $J$.
In other words, $\sI$ is the kernel of the map 
$\Sym^2 \sL \rightarrow \sL(4(\Theta_++\Theta_-))$ that sends $g*h$ to $gh$.
Note that $\sI$ is a $\rho^{(2)}(J[2](\ksep))$-invariant subspace of $\Sym^2 \sL$ 
because $J \subset \P^{15}$ is $J[2](\ksep)$-invariant. 
Set $\sI_{P,\pm} = \sI \cap \sL^{(2)}_{P,\pm}$ for any point
$P \in J[2](\ksep)$ and every sign.

\begin{proposition}\label{Idirectsum}
The spaces $\sI_{P,\pm}$ are $\rho^{(2)}$-invariant and we have 
$$
\sI \isom \bigoplus_{P \in J[2]} \left( \sI_{P,+} \oplus \sI_{P,-}\right).
$$
We have $\dim \sI_{o,+}=12$, $\dim \sI_{o,-}=0$ and 
$\dim \sI_{P,+}=\dim \sI_{P,-}=2$ for nonzero $P \in J[2]$.
The representation $\rho^{(2)}$ induces a representation 
$\sigma \colon J[2](\ksep) \rightarrow \SL(\sI)$.
\end{proposition}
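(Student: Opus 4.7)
The plan is to establish the four assertions of the proposition in turn: invariance of the $\sI_{P,\pm}$, their direct sum, the dimensions, and $\SL$-valuedness. For the first two, I would observe that $J \subset \P^{15}$ is stable under each $T_R$ with $R \in J[2](\ksep)$, so $\sI$ is $\rho^{(2)}(J[2](\ksep))$-invariant; and since $[-1]$ commutes with $T_R$ for $R \in J[2](\ksep)$, the action of $[-1]^*$ also preserves $\sI$. Applying the character projectors of the finite abelian group $J[2](\ksep) \times \{\pm 1\}$ to elements of $\sI$ lands in $\sI \cap \sL^{(2)}_{P,\pm} = \sI_{P,\pm}$, giving $\sI = \bigoplus_{P,\pm} \sI_{P,\pm}$ with each summand invariant.

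The main work is the dimension count. I would use the surjective multiplication map $\sigma_+ \colon \Sym^2 \sL \twoheadrightarrow \sL(4(\Theta_++\Theta_-))$, $g*h \mapsto gh$, with kernel $\sI$. By Proposition~\ref{cp} each $\rho(m)$ agrees with the pullback $T_{\beta(m)}^*$ up to a scalar, so $\sigma_+$ intertwines $\rho^{(2)}$ on the source with the translation action $T_R^*$ on the target up to a character $\mu \colon J[2](\ksep) \to k^*$; hence $\sigma_+$ sends each $\sL^{(2)}_{P,\pm}$ onto a single parity-homogeneous $J[2]$-isotypic piece of $\sL(4(\Theta_++\Theta_-))$, and $\dim \sI_{P,\pm}$ is the codimension of that piece inside $\sL^{(2)}_{P,\pm}$.

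Since $4(\Theta_++\Theta_-) \sim 8\Theta$ is very ample, $H^i$ vanishes in positive degree; and since $T_R$ has no fixed points on $J$ when $R \neq 0$, the Lefschetz fixed-point formula yields $\text{tr}\bigl(T_R^* \mid \sL(4(\Theta_++\Theta_-))\bigr) = 0$, while the trace at $R = 0$ equals the dimension $64$. Every character of $J[2](\ksep)$ therefore appears in $\sL(4(\Theta_++\Theta_-))$ with multiplicity exactly $4$. Using the explicit dimensions recorded in Section~\ref{modelssection}, the even and odd parts of $\sL(4(\Theta_++\Theta_-))$ have dimensions $76-42=34$ and $60-30=30$. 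For $P = o$, the equality $\sL^{(2)}_{o,-} = 0$ forces the trivial-character isotypic piece of $\sL(4(\Theta_++\Theta_-))$ to lie entirely in its even part, giving $\dim\sI_{o,+}=12$ and $\dim\sI_{o,-}=0$. For nonzero $P$, generically $\Gal(k(\Omega)/k) \isom \mathfrak{S}_6$ acts transitively on the $15$ nonzero points of $J[2](\ksep)$, so the parity split of each nontrivial-character isotypic piece is the same for all such $P$; distributing the residual even and odd dimensions $30$ and $30$ over $15$ characters forces the $(2,2)$ split, so $\dim\sI_{P,\pm}=2$.

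Finally, since $\rho^{(2)}(R)$ acts on $\sI_{P,\pm}$ as the scalar $\chi_P(R) \in \{\pm 1\}$, the determinant is
$$ \det \bigl( \rho^{(2)}(R) \big|_{\sI} \bigr) = \chi_o(R)^{12} \prod_{P \neq o} \chi_P(R)^{4} = \Bigl( \prod_{P \neq o} \chi_P(R) \Bigr)^{4} = 1, $$
so $\rho^{(2)}$ restricts to a representation $\sigma \colon J[2](\ksep) \to \SL(\sI)$. The main obstacle I anticipate is the parity split for the $15$ nontrivial-character pieces: the cleanest route combines the Lefschetz trace vanishing for fixed-point-free translations with the generic-coefficient hypothesis, which makes the dimensions constant across the nontrivial characters via the transitive $\mathfrak{S}_6$-action on pairs of Weierstrass points.
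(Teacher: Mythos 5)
Your argument is correct and reaches all four assertions, but the heart of it — the dimension count — runs along a genuinely different track from the paper. The paper, after establishing the decomposition by the same character-averaging you invoke, gets the dimensions by pure counting: the odd vanishing quadrics have dimension $30$, so by symmetry $\dim\sI_{P,-}=2$ for the $15$ nonzero $P$; then writing $a=\dim\sI_{o,+}$, $b=\dim\sI_{P,+}$ it solves $a+15b=42$ under the constraints $a,b\ge 0$, $a\le\dim\sL^{(2)}_{o,+}=16$, which forces $(a,b)=(12,2)$ with no input beyond Section~\ref{modelssection}. You instead pass to the quotient $\Sym^2\sL/\sI\isom\sL(4(\Theta_++\Theta_-))$ and pin down the trivial isotypic piece by showing every character of $J[2](\ksep)$ occurs there with multiplicity $4$, via vanishing of higher cohomology of an ample bundle and a fixed-point-free Lefschetz trace. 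That is a valid route and buys more (it identifies the $64$-dimensional quotient representation completely), but it costs machinery the paper avoids: you need the honest $J[2]$-action on $\H^0(J,\O(4(\Theta_++\Theta_-)))$ — cleanest is to say $\rho^{(2)}$ descends to the quotient since $\sI$ is invariant, rather than your ``up to a character $\mu$'' phrasing — and, since the paper allows any characteristic $\ne 2$, the trace-zero statement must come from the coherent (Woods Hole) fixed-point formula or from the elementary fact that a free action of $\Z/2$ (order prime to the characteristic) makes the equivariant Euler characteristic a multiple of the regular representation; the topological Lefschetz formula is not the right citation. Both you and the paper lean on a symmetry claim to equalize the $15$ nonzero-$P$ pieces; your appeal to generic transitivity of $\mathfrak{S}_6$ matches the paper's intent (it says only ``by symmetry''), though for special curves one should add the generic-point-plus-specialization argument the paper uses in Propositions~\ref{TPonY} and~\ref{cp}, or note that Galois conjugation sends $\sI_{P,\pm}$ to $\sI_{\sigma(P),\pm}$. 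Your determinant computation at the end is the same as the paper's (even-dimensional eigenspaces with eigenvalues $\pm1$).
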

\begin{proof}
The spaces $\sI_{P,\pm} = \sI \cap \sL^{(2)}_{P,\pm}$ are 
$\rho^{(2)}$-invariant because $\sI$ and the spaces $\sL^{(2)}_{P,\pm}$ are.
For any $g \in \sI \subset \Sym^2 \sL$ we can write $g = \sum_{P \in J[2]} (g_{P,+}+g_{P,-})$
with $g_{P,\pm} \in \sL^{(2)}_{P,\pm}$. Set $g_P = g_{P,+}+g_{P,-} \in \sL^{(2)}_P$. 
Take any $Q\in J[2](\ksep)$. Then we have 
\begin{align*}
\sum_{R \in \ker \chi_Q} &\rho^{(2)}(R)(g) - \sum_{R \not \in \ker \chi_Q} \rho^{(2)}(R)(g) \\
&=\sum_{R \in \ker \chi_Q} \sum_{P \in J[2]} \rho^{(2)}(R)(g_P) - 
\sum_{R \not \in \ker \chi_Q} \sum_{P \in J[2]} \rho^{(2)}(R)(g_P) \\
&=\sum_{P \in J[2]} \left(\sum_{R \in \ker \chi_Q} \chi_P(R)g_P - \sum_{R \not \in \ker \chi_Q} \chi_P(R)g_P\right) \\
&= \sum_{P \in J[2]} \left(\sum_{R \in J[2]} \chi_Q(R)\chi_P(R)\right)g_P = \sum_{P \in J[2]} \left(\sum_{R \in J[2]} \chi_{P+Q}(R)\right)g_P = 16g_Q,
\end{align*}
where the last identity follows from the fact that for any $P \neq Q$ the 
character $\chi_{P+Q}$ is nontrivial, so we have $\sum_{R \in J[2]} \chi_{P+Q}(R) = 0$ 
(see~\cite[Section~VI.1, Proposition~4]{serrecourse}). 
Since $\sI$ is $\rho^{(2)}$-invariant, we conclude 
$g_Q \in \sI$. As we have $g_{Q,\pm}(x) = \frac{1}{2}(g_Q(x) \pm g_Q(-x))$, we find 
$g_{Q,\pm}(x) \in \sI$, and thus $g_{Q,\pm}(x) \in \sI_{Q,\pm}$. This holds for 
all $Q$, so we get $\sum_{Q \in J[2]} \left( \sI_{Q,+} + \sI_{Q,-}\right) = \sI$. Since all 
subspaces in this sum intersect trivially, the sum is a direct sum, which proves 
the first statement. In Section~\ref{modelssection} we saw that the subspace of 
odd vanishing quadratic forms has dimension $30$. This means that 
$\sum_{P \in J[2]} \dim \sI_{P,-}=30$. From $\dim \sI_{o,-}=0$ and symmetry we 
conclude $\dim \sI_{P,-}=2$ for nonzero $P \in J[2]$. Set 
$a = \dim \sI_{o,+}$ and $b=\dim  \sI_{P,+}$ for any nonzero $P \in J[2]$. Then 
by symmetry we have $b=\dim  \sI_{P,+}$ for all nonzero $P \in J[2]$. We get 
$a+15b = \sum_{P \in J[2]} \dim \sI_{P,+} = 72-30 = 42$. From $a,b \geq 0$ and 
$a \leq \dim \sL^{(2)}_{o,+}=16$, we find $a=12$ and $b=2$.
For any $P,Q \in J[2](\ksep)$ and any sign, the eigenvalues of 
$\rho^{(2)}(Q)$ on $\sI_{P,\pm}$ are all the same and in $\mu_2$; because 
the dimension $\dim \sI_{P,\pm}$ is even, the determinant of $\rho^{(2)}(Q)$ 
restricted to $\sI_{P,\pm}$ equals $1$. It follows that $\det \rho^{(2)}(Q) =1$ 
for all $Q \in J[2](\ksep)$ so $\rho^{(2)}(Q) \in \SL(\sI)$.
 \end{proof}

\begin{corollary}
The ideal $I \subset \Sym(\sL)$ of $J$ is homogeneous with respect 
to the $\mu_2(\Lsep)/\mu_2$-grading.
\end{corollary}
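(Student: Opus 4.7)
The plan is to reduce the statement to the already-established decomposition of $\sI = I_2$ in Proposition~\ref{Idirectsum}, together with the general fact that in a $G$-graded commutative ring (here $G = \mu_2(\Lsep)/\mu_2$), an ideal is homogeneous if and only if it admits a generating set consisting of homogeneous elements. The ambient ring $\Sym(\sL)$ is $\mu_2(\Lsep)/\mu_2$-graded with $c_\chi$ placed in weight $\chi$, and this grading is compatible with the usual degree grading, so the notion of homogeneity with respect to the weight grading behaves in the standard way.

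First I would recall from Section~\ref{modelssection} (and the statement ``The ideal $I \subset \Sym(\sL)$ of $J$ is generated by $72$ quadratic forms'') that $I$ is generated by its degree-two component $\sI$. So it suffices to show that $\sI$ admits a basis of $\mu_2(\Lsep)/\mu_2$-homogeneous elements. Next, Proposition~\ref{Idirectsum} gives the decomposition
\[
\sI \;\isom\; \bigoplus_{P \in J[2]} \bigl( \sI_{P,+} \oplus \sI_{P,-} \bigr),
\]
and by definition $\sI_{P,\pm} \subset \sL^{(2)}_{P,\pm} \subset \sL^{(2)}_{\epsilon(P)}$, which is precisely the weight-$\epsilon(P)$ piece of $\Sym^2 \sL$ under the $\mu_2(\Lsep)/\mu_2$-grading. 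Thus every element of $\sI_{P,\pm}$ is homogeneous of weight $\epsilon(P)$, and concatenating bases of the summands yields a homogeneous basis of $\sI$.

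Finally, I would conclude by invoking the general principle: if a two-sided ideal in a $G$-graded commutative ring is generated by $G$-homogeneous elements, then the ideal itself is $G$-graded, i.e.\ it equals the direct sum of its weight components. Since $I$ is generated by the weight-homogeneous set $\sI$, we obtain $I = \bigoplus_{\chi} I_\chi$, which is exactly the claim. There is no real obstacle here: the content was already absorbed into Proposition~\ref{Idirectsum}, and the corollary is essentially a bookkeeping statement packaging that decomposition at the level of the full ideal rather than only its quadratic part.
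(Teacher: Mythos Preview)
Your proposal is correct and follows exactly the same approach as the paper: use Proposition~\ref{Idirectsum} to see that $\sI$ has a basis of $\mu_2(\Lsep)/\mu_2$-homogeneous elements, note that $\sI$ generates $I$, and conclude that $I$ is generated by homogeneous elements and hence is homogeneous.
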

\begin{proof}
From Proposition~\ref{Idirectsum} it follows that $\sI$ can be generated 
by homogeneous elements. Since $\sI$ generates $I$, it follows that also 
the ideal $I$ can be generated by homogeneous elements, which proves that 
$I$ is homogeneous.
\end{proof}

We are now ready to make everything explicit, including the choices of 
the $c_\pi$. However, instead of choosing a 
function $c_\pi \in \sL$ for each partition of $\Omega$ into two parts of odd 
size as in Proposition~\ref{api}, we choose a function $c_I$ for each 
subset $I \subset \Omega$ of size $1$ or $3$ such that 
$c_I = -c_{\Omega \setminus I}$ if $I$ has size $3$. To obtain
an explicit function $c_\pi$ for each partition $\pi=\{\pi_1,\pi_2\}$
one could choose a part $\pi_i$ with $\# \pi_i \in \{1,3\}$ and set 
$c_\pi = c_{\pi_i}$.
As in Proposition~\ref{api}, we often abbreviate the set $I$ in
the index by the list of its elements.

\begin{definition}\label{defcomega}
For all $\omega \in \Omega$ define an element $c_\omega \in \sL(2(\Theta_++\Theta_-))$ so that for all $j\in \{1 , \ldots , 6\}$ the relation $b_j = \sum_\omega \omega^{j-1} c_{\omega}$ holds.
\end{definition}

Note that $\{c_{\omega}\}_{\omega}$ is an unordered basis for 
$\sL(2(\Theta_++\Theta_-)-\sum_P F_P)$, as the transformation matrix from 
it, with any order, to the basis $(b_1,\ldots, b_6)$ is a Vandermonde matrix 
with nonzero determinant.

\begin{lemma}\label{comegaphibaromega}
We have $\rY^*(\phibar_\omega) = f_6 c_{\omega}$ for the isomorphism 
$\rY^* \colon k(V_1) \rightarrow k(\Y)$ induced by $\rY$.
\end{lemma}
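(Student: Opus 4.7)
The plan is to compute $\rY^*(\phibar_\omega)$ by first expressing $\varphi_\omega$ explicitly in the basis $(v_1, \ldots, v_6)$ of $\check{\Lsep}$ dual to $(g_1, \ldots, g_6)$, then pulling back through $\rY$, and finally recognizing a polynomial identity that makes the sum collapse.

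First I would write $\varphi_\omega$ in the dual basis. For any $z \in \Lsep$, expand $z = \sum_{i=1}^6 v_i(z)\, g_i$. Then $\varphi_\omega(z) = z(\omega) = \sum_{i=1}^6 g_i(\omega)\, v_i(z)$, so that $\varphi_\omega = \sum_{i=1}^6 g_i(\omega)\, v_i$ in $\check{\Lsep}$. By Definition~\ref{rYrJ}, the isomorphism $\rY$ is given in coordinates by $v_i \mapsto b_i$, so $\rY^*(v_i) = b_i$ for all $i$. Combining this with Definition~\ref{defcomega}, we obtain
$$
\rY^*(\varphi_\omega) \;=\; \sum_{i=1}^6 g_i(\omega)\, b_i \;=\; \sum_{\theta \in \Omega} \Bigl( \sum_{i=1}^6 g_i(\omega)\, \theta^{i-1} \Bigr) c_\theta.
$$

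Next, I would evaluate the inner sum $S(\omega,\theta) := \sum_{i=1}^6 g_i(\omega)\, \theta^{i-1}$. Here the key input is the polynomial identity
$$
\frac{f(X) - f(d)}{X - d} \;=\; \sum_{i=1}^6 d^{i-1}\, g_i(X),
$$
already used in the proof of Proposition~\ref{ratquotcoin}. Specializing $X = \omega$ and $d = \theta$ yields $S(\omega,\theta) = (f(\omega) - f(\theta))/(\omega - \theta)$. Since every element of $\Omega$ is a root of $f$, this vanishes whenever $\omega \neq \theta$; and in the limit $\theta \to \omega$ it equals $f'(\omega) = f_6 \prod_{\sigma \in \Omega \setminus \{\omega\}}(\omega - \sigma) = f_6 \lambda_\omega$.

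Consequently, only the term $\theta = \omega$ survives in the sum, so $\rY^*(\varphi_\omega) = f_6 \lambda_\omega c_\omega$. Dividing by $\lambda_\omega$ and using $\phibar_\omega = \lambda_\omega^{-1} \varphi_\omega$ gives $\rY^*(\phibar_\omega) = f_6 c_\omega$, as claimed. The only real obstacle is the bookkeeping: once the identification $\rY^*(v_i) = b_i$ is in place and $\varphi_\omega$ is written out against the dual basis, the polynomial identity does all the work and the diagonal behaviour falls out automatically from the fact that $\Omega$ is precisely the zero set of $f$.
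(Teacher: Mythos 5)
Your proof is correct and follows essentially the same route as the paper's: both rest on the identity $(f(X)-f(d))/(X-d)=\sum_i d^{i-1}g_i(X)$ and the duality between $\{\Pbar_\omega\}$ and $(g_1,\ldots,g_6)$, with you running the change of basis in the opposite direction (expanding $\varphi_\omega=\sum_i g_i(\omega)v_i$ and collapsing the double sum via $f(\theta)=0$) where the paper dualizes $f_6\Pbar_\omega=\sum_i\omega^{i-1}g_i$ and compares with $b_j=\sum_\omega\omega^{j-1}c_\omega$. The only informality, the ``limit $\theta\to\omega$'', is harmless: the divided difference is a polynomial in both variables whose value on the diagonal is $f'(\omega)=f_6\lambda_\omega$.
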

\begin{proof}
The polynomial $\Pbar_\omega = \prod_{\theta\in \Omega\setminus\{\omega\}} 
(X-\theta)$ from Section~\ref{desingkummer} satisfies 
$$
f_6\Pbar_\omega = \frac{f(X)-f(\omega)}{X-\omega} = 
\sum_{j=0}^6 f_j \frac{X^j-\omega^j}{X-\omega} = 
\sum_{j=0}^6 \sum_{i=1}^j f_j \omega^{i-1} X^{j-i} 
= \sum_{i=1}^6 \omega^{i-1} g_i.
$$
This relation between the unordered basis $\{\Pbar\}_\omega$ of 
$\Lsep$ and the basis $(g_1,\ldots, g_6)$ induces a relation between 
their dual bases of $\check{\Lsep}$, namely 
$f_6b_j = \sum_\omega \omega^{j-1} \phibar_\omega$. Applying $\rY^*$, we obtain
$f_6v_j = f_6\rY^*(b_j) = \sum_\omega \omega^{j-1} \rY^*(\phibar_\omega)$.
From the definition of $c_{\omega}$ we conclude 
$\rY^*(\phibar_\omega) = c_{\omega}$.
\end{proof}

The following functions, up to a constant factor, were also used by 
Michael Stoll~\cite[Section~10]{stollh}.

\begin{definition}\label{defcI}
For any subset $I \subset \Omega$ with $\#I =3$, let $c_I$ be defined by 
$$
4\left(\prod_{\omega \in I}\prod_{\psi \in \Omega\setminus I}(\psi - \omega) \right)\cdot c_I = 
\sum_{1\leq i \leq j \leq 4} \lambda_{ij}(I) k_{ij}
$$ 
with 
\begin{align*}
\lambda_{11} &= \sigma_2\sigma_3\tau_1\tau_2 + (4\sigma_1\sigma_3 - \sigma_2^2)\tau_1\tau_3 - \sigma_1\sigma_3\tau_2^2 + (\sigma_1\sigma_2 - \sigma_3)\tau_2\tau_3 + \sigma_3^2\tau_2 + \sigma_2\tau_3^2 - \sigma_2\sigma_3\tau_3, \\
\lambda_{12} &= -4\sigma_3\tau_1\tau_3 + 2\sigma_3\tau_2^2 - 2\sigma_2\tau_2\tau_3 - 2\sigma_2\sigma_3\tau_2 + (-4\sigma_1\sigma_3 + 2\sigma_2^2)\tau_3, \\
\lambda_{13} &= 2\sigma_2\tau_1\tau_3 + 2\sigma_2\sigma_3\tau_1 + 2\sigma_1\tau_2\tau_3 + 2\sigma_1\sigma_3\tau_2 - 2\tau_3^2 + 4\sigma_3\tau_3 - 2\sigma_3^2, \\
\lambda_{14} &= 2f_6^{-1}(\sigma_3\tau_1 + \sigma_1\tau_3), \\
\lambda_{22} &= -\sigma_3\tau_1\tau_2 + \sigma_2\tau_1\tau_3 + \sigma_1\sigma_3\tau_2 + (-\sigma_1\sigma_2 + 4\sigma_3)\tau_3, \\
\lambda_{23} &= 2\sigma_3\tau_1^2 - 2\sigma_1\tau_1\tau_3 - 2\sigma_1\sigma_3\tau_1 - 4\sigma_3\tau_2 + (2\sigma_1^2 - 4\sigma_2)\tau_3, \\
\lambda_{24} &= -2f_6^{-1}(\tau_3 + \sigma_3), \\
\lambda_{33} &= -\sigma_2\tau_1^2 + \sigma_1\tau_1\tau_2 + \tau_1\tau_3 + (\sigma_1\sigma_2 - \sigma_3)\tau_1 + (-\sigma_1^2 + 4\sigma_2)\tau_2 - \sigma_1\tau_3 + \sigma_1\sigma_3, \\
\lambda_{34} &= 2f_6^{-1}(\tau_2 + \sigma_2),\\
\lambda_{44} &= f_6^{-2},
\end{align*}
where $\sigma_i=\sigma_i(I)$ and $\tau_i=\tau_i(I)$ are the $i$-th elementary 
symmetric polynomials in the elements of $I$ and $\Omega \setminus I$ 
respectively.
\end{definition}

Note that for all $i,j \in \{1,2,3,4\}$ with $i \leq j$ and all $I \subset \Omega $ 
with $\# I = 3$ we have $\lambda_{ij}(I) = \lambda_{ij}(\Omega \setminus I)$, while 
the coefficient $\prod_{\omega \in I}\prod_{\psi \in \Omega\setminus I}(\psi - \omega)$ 
of $c_I$ in Definition~\ref{defcI} is negated when we replace $I$ by 
$\Omega \setminus I$. We conclude that $c_I+c_{\Omega \setminus I}=0$ for all $I$. 
The $c_I$ generate $\Sym^2 \sL(\Theta_++\Theta_-)$.
More precisely, we have the following statement. 

\begin{proposition}\label{kappas}
For all $i,j$ with $1 \leq i \leq j \leq 4$ we have 
$k_{ij} =\sum_I \kappa_{ij}(I) c_I$ with  
\begin{align*}
\kappa_{11} &=  \sigma_1, \\
\kappa_{12} &=  \sigma_2, \\
\kappa_{13} &=  \sigma_3, \\
\kappa_{14} &=  f_6\big(\sigma_1\tau_1\tau_3 + 2\sigma_2\tau_3 + \sigma_3\tau_1^2\big), \\
\kappa_{22} &=  \sigma_2\tau_1 +\sigma_3, \\
\kappa_{23} &=  \sigma_3\tau_1, \\
\kappa_{24} &=  f_6\big(\sigma_1\tau_2\tau_3 + \sigma_2\tau_1\tau_3 + \sigma_3^2 \big), \\
\kappa_{33} &=  \sigma_3\tau_2, \\
\kappa_{34} &=  f_6\big(\sigma_2^2\tau_3 + \sigma_2\tau_2\tau_3 + 2\sigma_3\tau_1\tau_3 \big), \\
\kappa_{44} &=  f_6^2\big(\sigma_1^2\sigma_2\tau_2\tau_3 + 4\sigma_1^2\sigma_3\tau_1\tau_3 + \sigma_1\sigma_2\sigma_3\tau_1\tau_2 + \sigma_1\sigma_2\tau_3^2 + \sigma_1\sigma_3^2\tau_2 \\
&\qquad \qquad + \sigma_1\sigma_3\tau_1\tau_2^2 + 3\sigma_1\sigma_3\tau_2\tau_3 + \sigma_2^2\tau_1^2\tau_3 + 4\sigma_2\sigma_3\tau_2^2 + 4\sigma_3^2\tau_3 + \sigma_3\tau_1\tau_2\tau_3 \big), \\
\end{align*}
where $\sigma_i=\sigma_i(I)$ and $\tau_i=\tau_i(I)$ are as in Definition~\ref{defcI}.
\end{proposition}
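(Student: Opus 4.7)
The plan is to recognize that Proposition~\ref{kappas} records the inverse of the linear change of basis set up by Definition~\ref{defcI}. Recall that $\{k_{ij}\}_{1 \leq i \leq j \leq 4}$ is a basis of $\Sym^2 \sL(\Theta_++\Theta_-)$ by Proposition~\ref{bases}, so the content of the statement is that the family $\{c_I\}_{\#I=3}$, taken modulo the relation $c_I = -c_{\Omega\setminus I}$, is also a basis, and that the two bases are related by the mutually inverse $10\times 10$ matrices with entries $\lambda_{ij}(I)$ and $\kappa_{ij}(I)$.

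First I would establish that the ten classes $c_I$ with $\#I = 3$ do form a basis of $\Sym^2 \sL(\Theta_++\Theta_-)$. By Proposition~\ref{Grep} the subrepresentation $\rho_{10}$ is the direct sum of the ten characters of $\Mm$ corresponding to partitions of $\Omega$ into parts of size $3$, each appearing with multiplicity one. Each $c_I$ lies in the one-dimensional eigenspace for the character associated with $\{I,\Omega\setminus I\}$, and Definition~\ref{defcI} exhibits it as a linear combination of the $k_{ij}$ in which the coefficient $f_6^{-2}$ of $k_{44}$ is nonzero, so $c_I$ is nonzero. Different partitions give distinct characters, so the ten corresponding eigenvectors are linearly independent, hence a basis, and in particular the matrix $[\lambda_{ij}(I)]$ is invertible.

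To verify the specific expressions for $\kappa_{ij}(I)$, I would substitute the definitions of the $c_I$ into $\sum_I \kappa_{ij}(I) c_I$ (the sum running over all twenty subsets of size $3$, with the symmetric part in $I \leftrightarrow \Omega \setminus I$ cancelling by $c_{\Omega \setminus I} = -c_I$) and check that the result equals $k_{ij}$. This reduces to ten polynomial identities in the symmetric functions of $\Omega$, or equivalently in $f_0, \ldots, f_6$ via $\sigma_i(\Omega) = (-1)^i f_{6-i}/f_6$. A slicker reformulation is to compute the matrix product $[\kappa_{ij}(I)] \cdot [\lambda_{i'j'}(I)]^{\mathrm{t}}$ indexed by the pairs $(ij,i'j')$ with $1 \leq i \leq j \leq 4$ and $1 \leq i' \leq j' \leq 4$, and check that it equals the identity.

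The main obstacle is computational bulk rather than conceptual difficulty: inverting a $10 \times 10$ matrix whose entries are polynomials of moderate degree in twelve quantities $\sigma_1(I),\sigma_2(I),\sigma_3(I),\tau_1(I),\tau_2(I),\tau_3(I)$ and the coefficient $f_6$ is a substantial calculation. The $\mathfrak{S}_6$-symmetry permuting $\Omega$ offers some reduction, and grouping the identities by bidegree in $(\sigma_\bullet, \tau_\bullet)$ further organizes the work; nevertheless the verification is essentially a direct symbolic computation, most conveniently carried out in a computer algebra system.
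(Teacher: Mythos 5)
Your proposal is correct and takes essentially the same route as the paper: the paper also treats the statement as recording the inverse of the change of basis from Definition~\ref{defcI}, forming the $10\times 10$ matrix $G$ whose rows are the coefficients of the $c_{I_r}$ with respect to $(k_{11},\ldots,k_{44})$ and verifying by direct symbolic computation that the columns of $G^{-1}$ are the differences $\kappa_{ij}(I_r)-\kappa_{ij}(\Omega\setminus I_r)$, which is exactly your ``check the matrix product is the identity'' step together with the same cancellation $c_{\Omega\setminus I}=-c_I$. Your preliminary argument that the $c_I$ form a basis (via Proposition~\ref{Grep} and the eigenvector property, the latter being Proposition~\ref{hahaaction} and only proved later) is not needed: exhibiting the explicit inverse already yields independence, and the paper dispenses with any separate basis argument.
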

\begin{proof}
Choose $10$ subsets $I_1,\ldots, I_{10} \subset \Omega$ with $\# I_r=3$ for 
all $r$, so that every partition of $\Omega$ in two parts of size $3$ contains 
one of $I_1,\ldots, I_{10}$.  
Let $G$ be the matrix whose $r$-th row is 
$$
\frac{1}{4}\left(\prod_{\omega \in I_r}\prod_{\psi \in 
\Omega\setminus I_r}(\psi - \omega)^{-1} \right) \cdot  
\left( \,\, \lambda_{11}(I_r) \qquad \lambda_{12}(I_r)  
\qquad \lambda_{13}(I_r) 
\qquad \cdots \qquad \lambda_{44}(I_r) \,\,\right),
$$
that is, the entries of $G$ in the $r$-th row are the coefficients 
of $c_{I_r}$ with respect to the basis $(k_{11}, k_{12}, \ldots, k_{44})$. 
Then the $r$-th column of $G^{-1}$ is 
$$
\left(
\begin{array}{c}
\kappa_{11}(I_r) - \kappa_{11}(\Omega \setminus I_r) \\
\kappa_{12}(I_r) - \kappa_{12}(\Omega \setminus I_r) \\
\vdots \\
\kappa_{44}(I_r) - \kappa_{44}(\Omega \setminus I_r) \\
\end{array}
\right)
$$
and its rows give the coefficients of the $k_{ij}$ in terms of the 
basis $(c_{I_1}, \ldots, c_{I_{10}})$. We therefore have 
$k_{ij} = \sum_{r=1}^{10} (\kappa_{ij}(I_r)-\kappa_{ij}(\Omega\setminus I_r))c_{I_r}
 = \sum_I \kappa_{ij}(I) c_I$, where the last sum is over all subset 
$I \subset \Omega$ with $\#I=3$. 
\end{proof}

For any set $I \subset \Omega$ we let $\chi_I \in \mu_2(\Lsep)/\mu_2$ be the 
character of $M$ associated to the partition $\pi = \{I,\Omega \setminus I\}$. 
If $\#I$ is even, then $\mu_2$ is contained in the kernel of $\chi_I$ 
and the induced character of $J[2](\ksep)$ equals $\chi_P$ where $e(P)=\pi$
and $\chi_P$ is defined just before Proposition~\ref{actiononsLP}.
Recall that $\Xi$ is the group of all subsets of $\Omega$. For any 
set 
$R \subset \Xi$ of representatives of all partitions of $\Omega$ into 
two parts of size $3$, the set $\{ c_I \,\, : \,\, I \in R\}$ is an unordered
basis for $\Sym^2 \sL(\Theta_++\Theta_-)$;
the following proposition says that, with respect to the unordered basis 
$\{ c_I \,\, : \,\, I \in R\} \cup \{c_{\omega} \,\, : \,\, \omega
\in \Omega\}$ of $\sL$, the representation $\rho$ is diagonal.

\begin{proposition}\label{hahaaction}
Let $I \subset \Omega$ be a subset of size $1$ or $3$.
Then for each $a \in M$ we have $\rho(a)(c_I) = \chi_I(a) c_I$.
\end{proposition}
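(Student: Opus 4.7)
The plan is to split into the two cases $\#I \in \{1,3\}$.

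For $\#I = 1$, write $I = \{\omega\}$. Since $c_\omega$ lies in the summand $\sL(2(\Theta_++\Theta_-)-\sum_P F_P)$ of $\sL$, the action of $\rho(a)$ on $c_\omega$ coincides with $\rho_6(a) = \rY^* \circ \check{m}_a \circ (\rY^*)^{-1}$. Lemma~\ref{comegaphibaromega} gives $\rY^*(\phibar_\omega) = f_6 c_\omega$, and since $\phibar_\omega = \lambda_\omega^{-1}\varphi_\omega$ is an eigenvector of $\check{m}_a$ with eigenvalue $\varphi_\omega(a)$, I would compute directly that $\rho_6(a)(c_\omega) = \varphi_\omega(a) c_\omega$. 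Unwinding the description of the pairing $\Xi \times \Xi \to \mu_2$ from Section~\ref{sectionsetup} identifies $\chi_{\{\omega\}}(a) = \varphi_\omega(a)$, completing this case.

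For $\#I = 3$, the element $c_I$ lies in $\Sym^2 \sL(\Theta_++\Theta_-)$, so $\rho(a)(c_I) = \rho_{10}(a)(c_I) = \alpha(a) T_{10,\beta(a)}(c_I)$. Because $M$ is generated by the elements $a_{\omega_1,\omega_2}$ attached to distinct pairs in $\Omega$, and $\rho$ is a representation, it suffices to verify the identity on such a generator $a = a_{\omega_1,\omega_2}$, for which $\alpha(a) = -1$ and $P := \beta(a)$ is the two-torsion point associated to $\{\omega_1,\omega_2\}$. Using $T_{10,P} = \Res(g,h)^{-1}\Sym^2 T_{4,P}$ together with the explicit matrix $M_P$ for $T_{4,P}$ from Proposition~\ref{TPonX} and the explicit expression for $c_I$ in Definition~\ref{defcI}, one expands $\Sym^2 T_{4,P}(c_I)$ in the basis $\{k_{ij}\}$ and matches it with $-\Res(g,h)\,\chi_I(a)\, c_I$, where $\chi_I(a) = (-1)^{\#(I \cap \{\omega_1,\omega_2\})}$ follows from the pairing formula.

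The main obstacle will be controlling the sheer length of the computation in the $\#I = 3$ case. A useful conceptual shortcut is available: by Proposition~\ref{Grep} each character appearing in $\rho_{10}$ occurs with multiplicity one, so the $\chi_I$-eigenspace is one-dimensional and the assertion reduces to showing that $c_I$ is a nonzero element of the correct eigenspace. The explicit formula in Definition~\ref{defcI} is manifestly Galois-equivariant in $I$, and so is $I \mapsto \chi_I$; thus once the match between $c_I$ and $\chi_I$ is verified for a single representative of each Galois orbit, it propagates throughout the orbit. Finally, specialization from the generic setting, where $\Gal(k(\Omega)/k) \cong \mathfrak{S}_6$ acts transitively on the set of partitions of $\Omega$ into two parts of size $3$, extends the identity to arbitrary $f$ by continuity in the coefficients.
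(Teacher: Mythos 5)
Your proposal is correct and follows essentially the same route as the paper: the size-$1$ case via $\rho_6(a)=\rY^*\circ\check{m}_a\circ(\rY^*)^{-1}$ together with Lemma~\ref{comegaphibaromega}, and the size-$3$ case by explicitly checking that the coefficient vector of $c_I$ from Definition~\ref{defcI} is a left eigenvector of $\Res(g,h)^{-1}\Sym^2 M_P$ (Proposition~\ref{TPonX}) with eigenvalue $\alpha(a)\chi_I(a)$, which is precisely the computer-algebra verification the paper performs. Your organizational shortcuts (reduction to the generators $\alpha_{\omega_1,\omega_2}$, Galois-orbit propagation, generic specialization) are sound but do not remove the computational core, and note that the multiplicity-one observation from Proposition~\ref{Grep} alone would not suffice, since one must still show that $c_I$ lies in a single character eigenspace at all --- which is exactly what your (and the paper's) explicit expansion establishes.
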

\begin{proof}
For $a =\pm 1$ the action of $\rho(a)$ on $\sL$ is given by multiplication 
by $\pm 1$, so the statement is trivial. Suppose that $a \neq \pm 1$. First 
assume $I = \{\omega\}$ for some $\omega \in \Omega$ so that $c_I = c_{\omega}
= f_6^{-1} \rY^*(\phibar_\omega)$ by Proposition~\ref{comegaphibaromega}.
Then we have 
\begin{align*}
\rho(a)(c_I) &= \rho_6(a)(c_I) = 
(\rY^* \circ \check{m}_a \circ (\rY^*)^{-1})(c_I)
  = f_6^{-1} \rY^*(\check{m}_a(\phibar_\omega)) \\
&= f_6^{-1}\varphi_\omega(a) \rY^*(\phibar_\omega) = \varphi_\omega(a) c_I = 
\chi_I(a) c_I.
\end{align*}
Now assume $\#I = 3$.
Recall that we have a perfect pairing $\Xi \times \Xi \rightarrow \mu_2$ 
on the group $\Xi$ of all subsets of $\Omega$ given by $(I_1,I_2) \mapsto 
(I_1:I_2) = (-1)^r$ with $r = \#(I_1 \cap I_2)$, and that the 
map $e \colon M \to \Mu \subset \Xi$ associates to each $m \in M$ a subset 
of $\Omega$ of even size. We have $\chi_I(a) = (e(a) : I)$. 
Let $v_I$ denote the vector 
$$
v_I = \left( \,\, \lambda_{11}(I_r) \qquad \lambda_{12}(I_r)  
\qquad \lambda_{13}(I_r) 
\qquad \cdots \qquad \lambda_{44}(I_r) \,\,\right)
$$
with $\lambda_{ij}$ as in Definition~\ref{defcI}. 
Set $P = \beta(a)\neq 0$ and let 
$\{\omega_1,\omega_2\}$ be the pair of roots defining $P$, so that 
$\Omega \setminus \{\omega_1,\omega_2\} = e(\alpha(a) \cdot a)$.
With a computer algebra system it is easy to check 
that $v_I$ is an eigenvector on the left of the symmetric square $M_P^{(2)}$ of 
the matrix $M_P$ as in Proposition~\ref{TPonX}. In fact, for 
$M' = \Res(g,h)^{-1}M_P^{(2)}$ with $g,h$ as in Proposition~\ref{TPonX}
as well, we have 
$$
v_I \cdot M'  = (\Omega \setminus \{\omega_1,\omega_2\} : I) \cdot v_I = 
(e(\alpha(a)\cdot a) : I) \cdot v_I
  = \alpha(a) (e(a) : I) \cdot v_I = \alpha(a) \chi_I(a) \cdot v_I.
$$
Since the action of $T_{4,P}$ is given by 
multiplication from the right by $M_P$ by Proposition~\ref{TPonX}, 
the action of $T_{10,P}$ is given by multiplication by $M'$ from 
the right. Up to a scalar, the entries of $v_I$ are the coefficients 
of $c_{I}$ with respect to the basis $(k_{11}, k_{12}, \ldots, k_{44})$, 
so we find $T_{10,P}(c_I) = \alpha(a)\chi_I(a) \cdot c_I$.
We therefore get $\rho(a)(c_I) = \rho_{10}(a)(c_I) = \alpha(a)T_{10,P}(c_I)
=\chi_I(a) \cdot c_I$.
\end{proof}

For each $P \in J[2](\ksep)$ and each sign we now give quadratic forms that 
generate the subspace $\sL^{(2)}_{P,\pm}$. All together these generate 
the ideal defining $J$ in $\P(\check{\sL})$. The reason for defining 
functions $c_I$ for each $I \subset \Omega$ of size $3$ with 
$c_I = - c_{\Omega \setminus I}$, rather than 
defining a function for each partition into two parts of size $3$,
is that the quadratic forms are simpler if written in terms of the $c_I$. 
For each quadratic form we make choices whether to use $c_I$ or 
$c_{\Omega \setminus I}$ for several $I$. It is worth checking that 
the quadratic forms obtained from different choices generate the same 
subspace. 

Take any nonzero $P \in J[2](\ksep)$, corresponding to the pair 
$\{\omega_1, \omega_2\}$. Then $\sL^{(2)}_{P,-}$ is generated by 
the monomials $c_{\theta} * c_{\omega_1\omega_2\theta}$ 
for $\theta \not \in \{\omega_1, \omega_2\}$ by Proposition~\ref{api}.
From the discussion around \eqref{vanishingodd} in Section~\ref{modelssection} 
we know that for 
each $l \in \{0,1\}$ the quadratic form $Q_{4,l} = k_{11}b_{3+l}-k_{12}b_{2+l}+k_{13}b_{1+l}$ is contained in $\sI \subset I$. 
By Proposition~\ref{Idirectsum}, the projection of $Q_{4,l}$ to $\sL^{(2)}_{P,-}$ is also 
contained in $\sI$. From $b_j = \sum_\omega \omega^{j-1} c_{\omega}$
and $k_{1j} = \sum_I \sigma_j(I) c_I$ for $1\leq j \leq 3$, we find that 
this projection equals 
\begin{align}
\sum_{\theta \neq \omega_1,\omega_2} &
\left( \sum_{n=1}^3 (\sigma_n(\{\theta, \omega_1,\omega_2\})-\tau_n(\{\theta, \omega_1,\omega_2\})) 
\theta^{3+l-n} \right) 
c_{\theta} * c_{\omega_1\omega_2\theta} \nonumber \\
 & = \sum_{\theta \neq \omega_1,\omega_2}\left(
 \theta^l \, \, \prod_{\hsmash{\psi \neq \theta, \omega_1,\omega_2}} \, \,(\theta-\psi) \right)
c_{\theta} * c_{\omega_1\omega_2\theta} \label{oddsymmquad}.
\end{align}
By Proposition~\ref{Idirectsum} we have $\dim \sI_{P,-} = 2$, so 
the quadratic forms in \eqref{oddsymmquad} for $l=0,1$ generate 
$\sI_{P,-}$. 
By Proposition~\ref{api} the space $\sL^{(2)}_{P,+}$
is generated by the monomials $c_{\omega_1} * c_{\omega_2}$ and 
$c_{\theta_1\theta_2\omega_1} * c_{\theta_1\theta_2\omega_2}$ for 
$\theta_1, \theta_2 \neq \omega_1,\omega_2$ and $\theta_1 \neq \theta_2$. 
The projection of $\frac{1}{2}(k_{12}^2 - k_{11}k_{22}) \in \sI$ 
to $\sL^{(2)}_{P,+}$ equals 
\begin{equation}\label{evensymmone}
\sum_{\hsmash{\pi = \{\{\theta_1,\theta_2\},\{\psi_1,\psi_2\}\}}} \,\, \nu(\pi) \cdot 
c_{\theta_1\theta_2\omega_1} * c_{\theta_1\theta_2\omega_2},
\end{equation}
where the sum ranges over all three partitions of $\Omega\setminus
\{\omega_1, \omega_2\}$ into two sets of cardinality $2$
and $\nu(\pi) = (\theta_1-\psi_1)(\theta_1-\psi_2)(\theta_2-\psi_1)(\theta_2-\psi_2)$
for $\pi = \{\{\theta_1,\theta_2\},\{\psi_1,\psi_2\}\}$.
From Proposition~\ref{Idirectsum} we conclude that the quadratic form 
\eqref{evensymmone} is contained in $\sI_{P,+}$.
Write $f = gh$ with $g=X^2+g_1X+g_0 = (X-\omega_1)(X-\omega_2)$ 
and $h = h_4X^4+h_3X^3+h_2X^2+h_1X+h_0$, and set $\lambda = 2h_2+h_3g_1-g_1^2+2g_0$. 
Let $Q$ denote the right-hand side of the first equation in \eqref{bsintermsofks}. 
Then we have $\frac{1}{4}\big(2(b_1^2-Q)+f_6\lambda(k_{12}^2-k_{11}k_{22})\big) \in \sI$. 
The projection of this quadratic form to $\sL^{(2)}_{P,+}$ equals
\begin{align}\label{evensymmtwo}
c_{\omega_1} * c_{\omega_2}+ f_6 \cdot \,\,
\sum_{\hsmash{\pi = \{\{\theta_1,\theta_2\},\{\psi_1,\psi_2\}\}}}
\,\, \nu(\pi) (\theta_1+\theta_2)(\psi_1+\psi_2) \cdot 
c_{\theta_1\theta_2\omega_1} * c_{\theta_1\theta_2\omega_2}.
\end{align}
By Proposition~\ref{Idirectsum} this projection is contained in 
$\sI_{P,+}$. Again by Proposition~\ref{Idirectsum} we have 
$\dim \sI_{P,+} = 2$, so the quadratic forms in \eqref{evensymmone} 
and \eqref{evensymmtwo} generate 
$\sI_{P,+}$.

For $\sI_{0,+}$ we do not have generators that are as simple as 
those in \eqref{oddsymmquad}, \eqref{evensymmone}, and \eqref{evensymmtwo}.
The only simple quadratic forms in $\sI_{0,+}$ that we know are 
\begin{align}\label{niceqs}
\sum_\omega \omega^j \lambda_\omega c_{\omega}^2
\end{align}
for $j=0,1,2$. By Lemma~\ref{comegaphibaromega} 
these correspond to $Q_j^{(1)}$ in Proposition~\ref{Veqsdiag}. 
Six quadratic forms that give a basis for $\sI_{0,+} \cap \Sym^2 
\sL(\Theta_++\Theta_-)$ are the projections to $\sL^{(2)}_{0,+}$
of the quadratic forms $k_{12}^2-k_{11}k_{22}$, $k_{12}k_{13}-k_{11}k_{23}$,
$k_{13}^2-k_{11}k_{33}$, $k_{13}k_{23}-k_{12}k_{33}$, $k_{23}^2-k_{22}k_{33}$, 
and the quadric $g_\X$ in \eqref{gX} that defines the model of $\X$ inside
the $2$-uple embedding of $\P^3$ into $\P^9$. 
The first five of these quadrics together with the $15$ quadrics 
\eqref{evensymmone} for all nonzero $P \in J[2](\ksep)$ define the 
$2$-uple embedding of $\P^3$ into $\P^9$. The whole subspace 
$\sI_{0,+}$ is generated by these nine quadrics and 
the projections to $\sL^{(2)}_{0,+}$ of the differences of the 
left- and right-hand side of the equations in 
\eqref{bsintermsofks}.
This concludes the description of generators for all subspaces
$\sI_{P,\pm}$. Explicit formulas are given in Appendix~\ref{appone}.

\section{The twists of the Jacobian}\label{twistingsection}

Let $\xi \in \H^1(J[2])$ be contained in the kernel $P^1(J[2])$ of the
map $\Upsilon \colon \H^1(J[2]) \rightarrow \Br(k)[2]$ as defined in 
Section~\ref{sectionsetup}. In this section we determine explicitly a 
two-covering $A$ of $J$ whose $k$-isomorphism class corresponds to $\xi$.
The kernel of $\Upsilon$ equals the image of the map $\beta_* \colon 
\H^1(\Mm) \rightarrow \H^1(J[2])$ by the exactness of the left 
vertical sequence in Diagram~\eqref{maindiagram}.
Let $\xibar \in \H^1(\Mm)$ be a lift of $\xi$ under $\beta_*$.  
By Proposition~\ref{HoneM} there are elements $\varepsilon \in \Lsep$, 
$\delta \in L^*$ and $n \in k^*$ such that $\varepsilon^2 = \delta$, 
$N_{\Lsep/\ksep}(\varepsilon) = n$, and the class $\xibar$ is 
represented by the cocycle $\sigma \mapsto \sigma(\varepsilon)/\varepsilon$.  
For all $\omega \in \Omega$ we write $\varepsilon_\omega = 
\varphi_\omega(\varepsilon)$ and $\delta_\omega = \varphi_\omega(\delta)$, 
so that $\varepsilon_\omega^2 = \delta_\omega$ and $\prod_\omega 
\varepsilon_\omega = n$.  For any subset $I \subset \Omega$ of 
cardinality $1$ or $3$, set
$$
t_I  = \left\{
\begin{array}{ll}
\varepsilon_\omega  & \mbox{ if } \# I = 1, \\
\prod_{\omega \in I} \varepsilon_\omega + \prod_{\omega \in 
\Omega \setminus I} 
\varepsilon_\omega & \mbox{ if } \# I = 3.
\end{array}
\right.
$$
We 
assume that for all $I \subset \Omega$ with $\#I=3$ we have that $t_I$ is non-zero; if the field $k$ is infinite, then it is easy to see that this can be achieved by choosing carefully $\varepsilon, \delta$ and $n$ representing the class $\xibar$.
Let $g \colon \P(\check{\sL}) \rightarrow \P(\check{\sL})$ be the 
linear automorphism induced by the linear map 
$g^* \colon \sL \rightarrow \sL$ given by $c_I \mapsto t_I \cdot c_I$.
Note that the action on $\sL$ is well defined, as 
$t_I = t_{\Omega\setminus I}$ for any $I \subset \Omega$ with $\# I = 3$.
Even in the case $\varepsilon = \delta = n = 1$ the automorphism $g$
is not the identity, though this can be arranged by replacing $t_I$ by 
$\frac{1}{2}t_I$ for $I$ with $\# I = 3$ throughout the rest of the paper.
For each positive integer $d$, the automorphism $g^*$ extends 
naturally to an automorphism of the $\ksep$-vector space $\Sym^d \sL$, 
which we also denote by $g^*$. Note that $g^*$ preserves the 
$\mu_2(\Lsep)/\mu_2$-grading.
Recall that for any subset $I \subset \Omega$ we have a character 
$\chi_I \in \mu_2(\Lsep)/\mu_2$ defined just before Proposition~\ref{hahaaction}.

\begin{lemma} \label{galoist}
For any $I \subset \Omega$ and any Galois automorphism $\sigma \in \Gal(\ksep/k)$ we have 
$$ \prod_{\omega \in I} \varepsilon_{\sigma(\omega)} = \chi_{\sigma(I)}\bigl(\sigma(\varepsilon)/\varepsilon\bigr)\cdot \sigma \Big( \prod_{\omega \in I} \varepsilon_{\omega} \Big) .
$$
If $I$ has cardinality $1$ or $3$, then we have 
\begin{eqnarray*}
t_{\sigma(I)} & = & \chi_{\sigma(I)}\bigl(\sigma(\varepsilon)/\varepsilon\bigr) \cdot \sigma(t_I) , \\
\sigma(g^*(c_I)) & = & \chi_{\sigma(I)}\bigl(\sigma(\varepsilon)/\varepsilon\bigr) \cdot g^*(\sigma(c_I)) .
\end{eqnarray*}
\end{lemma}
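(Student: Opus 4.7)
The plan is to prove the three formulas in order, since each feeds into the next. The common ingredient is the explicit description of the Galois action on $\Lsep = \bigoplus_\omega \ksep$, from which $\varphi_{\sigma(\omega)}(\sigma(\varepsilon)) = \sigma(\varepsilon_\omega)$ and hence $\varphi_{\sigma(\omega)}(\sigma(\varepsilon)/\varepsilon) = \sigma(\varepsilon_\omega)/\varepsilon_{\sigma(\omega)}$. Writing $m = \sigma(\varepsilon)/\varepsilon \in \Mm$, this rearranges in the multiplicative group $\mu_2(\ksep)$ to
$$\varepsilon_{\sigma(\omega)} = \varphi_{\sigma(\omega)}(m)\cdot\sigma(\varepsilon_\omega).$$
Taking the product over $\omega \in I$ and recognizing $\prod_{\omega\in I}\varphi_{\sigma(\omega)}(m) = \prod_{\omega'\in\sigma(I)}\varphi_{\omega'}(m) = \chi_{\sigma(I)}(m)$ from the defining formula $\chi_J(m) = (e(m):J)$ yields the first identity.

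For the second identity, the case $\#I = 1$ is immediate from the first, since $t_I = \varepsilon_\omega$. For $\#I = 3$, apply the first identity separately to $I$ and to $\Omega \setminus I$, giving
$$t_{\sigma(I)} = \chi_{\sigma(I)}(m)\cdot\sigma\Big(\prod_{\omega\in I}\varepsilon_\omega\Big) + \chi_{\sigma(\Omega\setminus I)}(m)\cdot\sigma\Big(\prod_{\omega\in \Omega\setminus I}\varepsilon_\omega\Big).$$
So the crux is to verify $\chi_{\sigma(I)}(m) = \chi_{\sigma(\Omega\setminus I)}(m)$. Since $\chi_\Omega(m) = \prod_\omega\varphi_\omega(m) = N(m)$ and $m = \sigma(\varepsilon)/\varepsilon$ has norm $\sigma(n)/n = 1$ because $n\in k^*$, we have $\chi_\Omega(m) = 1$, and the bilinear pairing gives $\chi_{\sigma(I)}(m)\chi_{\sigma(\Omega\setminus I)}(m) = \chi_\Omega(m) = 1$ as desired. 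This is the one step that is not entirely automatic, and it is where the condition $N(\varepsilon) = n \in k^*$ enters in an essential way.

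For the third identity, note that the formula of Definition~\ref{defcI} expresses $c_I$ as a linear combination of the $k_{ij}$ with coefficients that are polynomial in the elementary symmetric functions $\sigma_i(I)$ and $\tau_i(I)$, divided by a product which is symmetric in $I \leftrightarrow \Omega\setminus I$. Since Galois acts on these symmetric functions by permuting $\Omega$, and the $k_{ij}$ lie in $\sL$ over $k$, we have $\sigma(c_I) = c_{\sigma(I)}$ for every $I$ of size $1$ or $3$. Combining this with $g^*(c_I) = t_I c_I$, we compute both sides:
$$\sigma(g^*(c_I)) = \sigma(t_I)\,c_{\sigma(I)},\qquad g^*(\sigma(c_I)) = g^*(c_{\sigma(I)}) = t_{\sigma(I)}\,c_{\sigma(I)},$$
and the third formula reduces immediately to the second. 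The main obstacle throughout is really bookkeeping with the dual basis $\{\varphi_\omega\}$ and the Galois-permutation convention, together with exploiting $N(m) = 1$; once those are organized the computations are mechanical.
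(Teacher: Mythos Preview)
Your proof is correct and follows essentially the same route as the paper: both compute $\varepsilon_{\sigma(\omega)} = \varphi_{\sigma(\omega)}(m)\,\sigma(\varepsilon_\omega)$ for $m = \sigma(\varepsilon)/\varepsilon$, take the product over $I$, then deduce the second and third identities from the first using $m \in \Mm$ and $\sigma(c_I) = c_{\sigma(I)}$. One small slip: the product $\prod_{\omega\in I}\prod_{\psi\notin I}(\psi-\omega)$ in Definition~\ref{defcI} is antisymmetric under $I \leftrightarrow \Omega\setminus I$, not symmetric, but this aside plays no role in your Galois-equivariance argument, which only needs that the coefficients of $c_I$ are explicit rational functions of the roots that $\sigma$ carries to the corresponding coefficients for $\sigma(I)$.
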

\begin{proof}
Let  $m = \sigma(\varepsilon)/\varepsilon$ and $I_0  = \{\omega \in \Omega \,\, : \,\, \varepsilon_{\sigma(\omega)}=-\sigma(\varepsilon_\omega) \} = \sigma^{-1}(e(m))$ and 
set $r=\# (I\cap I_0)$.   Thus we have 
$\chi_{\sigma(I)}(m) = (\sigma(I) : e(m)) =(I :\sigma^{-1}(e(m)) )= (I:I_0)= (-1)^r$ and similarly 
$\chi_{\Omega \setminus \sigma(I)}(m) = (-1)^{6-r} = (-1)^r$.  
By definition of $I_0$ we have 
$$ \prod_{\omega \in I} \varepsilon_{\sigma(\omega)} = (-1)^r \prod_{\omega \in I} \sigma(\varepsilon_{\omega}) = \chi_{\sigma(I)}(m)
\cdot \sigma \Big( \prod_{\omega \in I} \varepsilon_{\omega} \Big) , $$
and the first part of the lemma is proved. The second equality follows immediately from the definition of $t_I$. We then have
$\sigma(g^*(c_I)) = \sigma(t_I)\sigma(c_I) = \chi_{\sigma(I)}(m)t_{\sigma(I)}c_{\sigma(I)} = \chi_{\sigma(I)}(m) g^*(\sigma(c_I))$, which proves the last equality.
\end{proof}

Let $A_\xi$ be the surface $g^{-1}(J)$ where $J$ is embedded in 
$\P(\check{\sL})$ as before. Then $g$ restricts to
an isomorphism $A_\xi \rightarrow J$, defined over $\ksep$, 
which we also denote by $g$. Note that $A_\xi$ depends on 
the choice
of $\delta$, $n$, and $\varepsilon$.

\begin{proposition}\label{Axioverk}
The surface $A_\xi$ is defined over $k$.
\end{proposition}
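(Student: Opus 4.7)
The plan is to prove that the defining ideal of $A_\xi$ in $\Sym(\sL)$ is Galois-stable, which is equivalent to $A_\xi$ being defined over $k$. Since $A_\xi = g^{-1}(J)$ and $g$ is a linear automorphism of $\P(\check{\sL})$ with pullback $g^*$, one has $I(A_\xi) = g^*(I(J))$. By Section~\ref{modelssection}, $I(J)$ is generated by its degree-$2$ part $\sI$, so it suffices to show that $g^*(\sI) \subset \Sym^2 \sL$ is stable under $\Gal(\ksep/k)$.

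By Proposition~\ref{Idirectsum}, $\sI = \bigoplus_P (\sI_{P,+} \oplus \sI_{P,-})$, with each summand homogeneous of weight $\chi_{e(P)}$ with respect to the $\mu_2(\Lsep)/\mu_2$-grading on $\Sym(\sL)$. Since $g^*$ multiplies each basis monomial $c_I \cdot c_J$ by the scalar $t_I t_J$, it preserves both the grading and the parity. Hence it is enough to fix a summand $\sI_{P,\pm}$, take any $F \in \sI_{P,\pm}$ and any $\sigma \in \Gal(\ksep/k)$, and show that $\sigma(g^*(F))$ lies in $g^*(\sI)$.

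The next step is to expand $F = \sum a_{I,J}\, c_I c_J$, where the sum is over pairs $(I,J)$ with $\chi_I \chi_J = \chi_{e(P)}$, and apply $\sigma$ monomial by monomial. The $c_I$ are defined by polynomial expressions in the elementary symmetric functions of $I$ and of $\Omega\setminus I$, so $\sigma(c_I) = c_{\sigma(I)}$; and by Lemma~\ref{galoist}, $\sigma(t_I) = \chi_{\sigma(I)}(m)\, t_{\sigma(I)}$ with $m = \sigma(\varepsilon)/\varepsilon$. Combining these gives
\[
\sigma(g^*(F)) = \sum \sigma(a_{I,J})\, \chi_{\sigma(I)}(m)\chi_{\sigma(J)}(m)\, t_{\sigma(I)} t_{\sigma(J)}\, c_{\sigma(I)} c_{\sigma(J)}.
\]
The crucial observation is that within a single piece $\sI_{P,\pm}$ every monomial carries the same scalar correction: from $\chi_I\chi_J = \chi_{e(P)}$ and the multiplicativity of $\chi_{(-)}$ under symmetric difference, one gets $\chi_{\sigma(I)}(m)\chi_{\sigma(J)}(m) = \chi_{\sigma(e(P))}(m)$ independently of the particular monomial. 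This constant factor therefore pulls out of the sum, yielding $\sigma(g^*(F)) = \chi_{\sigma(e(P))}(m)\cdot g^*(\sigma(F))$.

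Finally, $\sigma(F) \in \sI_{\sigma(P),\pm} \subset \sI$ because $J$ is defined over $k$ and the Galois action permutes the graded pieces $\sI_{Q,\pm}$; multiplying by a scalar in $\mu_2 \subset k^*$ preserves the subspace $g^*(\sI)$. We conclude $\sigma(g^*(F)) \in g^*(\sI)$, so $g^*(\sI)$ is Galois-stable and hence $A_\xi$ descends to $k$. The main subtlety is the uniformity of the scalar correction across all monomials in a given piece $\sI_{P,\pm}$: it is precisely this fact that makes the eigenbasis $\{c_I\}$ the right basis in which to perform the twisting, and it would fail if one worked with the original basis $(k_{ij},b_i)$ of $\sL$.
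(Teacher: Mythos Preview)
Your proof is correct and follows essentially the same approach as the paper's: both arguments use the $\mu_2(\Lsep)/\mu_2$-grading via Proposition~\ref{Idirectsum}, apply Lemma~\ref{galoist} to each monomial in a graded piece to obtain $\sigma(g^*(q)) = \pm\, g^*(\sigma(q))$ for $q \in \sI_P$, and then use that $\sI$ is Galois-stable (since $J$ is defined over $k$) to conclude that $g^*(\sI)$ is Galois-stable. The only cosmetic difference is that you work with the finer decomposition $\sI_{P,\pm}$ rather than $\sI_P$, which is harmless since the parity plays no role in the argument.
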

\begin{proof}
Take any $P\in J[2](\ksep)$ and set $\chi = \epsilon(P) \in 
\mu_2(\Lsep)/\mu_2$.
Let $I_1,I_2 \subset \Omega$ be subsets of odd cardinality such that
$c_{I_1} * c_{I_2} \in \sL^{(2)}_P =\sL^{(2)}_\chi$. 
For $j=1,2$, set $\chi_j = \chi_{I_j} \in \mu_2(\Lsep)/\mu_2$, 
so that $c_{I_j}$ has weight $\chi_j$ and $c_{I_1} * c_{I_2}$ has 
weight $\chi_1 \chi_2 = \chi$. Set $m=\sigma(\varepsilon)/\varepsilon$. 
Let $\sigma \in \Gal(\ksep/k)$ be any Galois automorphism.
From Lemma~\ref{galoist} we find 
\begin{align}
\sigma(g^*(c_{I_1} * c_{I_2})) &= \sigma(g^*(c_{I_1})) * \sigma(g^*(c_{I_2})) \label{sigmagstar}\\
  &= \sigma(\chi_1)(m)g^*(\sigma(c_{I_1}))*\sigma(\chi_2)(m)g^*(\sigma(c_{I_2})) =
  \sigma(\chi)(m)g^*(\sigma(c_{I_1}*c_{I_2})). \nonumber
\end{align}
Since $\sigma(\chi)(m)$ only depends on $\sigma$ and $P$, and $\sL^{(2)}_P$ is 
generated by monomials by Proposition~\ref{splitupsL}, we find
$\sigma(g^*(q)) = \sigma(\chi)(m) \cdot g^*(\sigma(q))=\pm g^*(\sigma(q))$ 
for each $q \in \sL^{(2)}_P$. 
Set $E = \{ q \, \, : \,\, q \in \sI_P \mbox{ for some } P \}$. 
The set $E$ generates the ideal $I$ that defines the Jacobian $J$, so the set $g^*(E)$ 
generates the ideal that defines $A_\xi$.
Since $J$ is defined over $k$, we have $\sigma(E) = E$ 
from Proposition~\ref{Idirectsum}. We therefore have 
$$
g^*(E)=g^*(\sigma(E)) = \{g^*(\sigma(q)) \,\, : \,\, q \in E\} = 
\{\pm \sigma(g^*(q)) \,\, : \,\, q \in E\} = \sigma(g^*(E)).
$$
We conclude that the ideal that defines $A_\xi$, which is generated 
by $g^*(E)$,
is 
Galois invariant. By descent this implies that 
$A_\xi$ is defined over $k$.
\end{proof}

We can make Proposition~\ref{Axioverk} explicit and give a 
Galois-invariant
set of quadratic forms defining $A_\xi$, each defined over $k(\Omega)$. 
We have 
\begin{align*}
t_{\{\omega_1\}}t_{\{\omega_2\}} &= \varepsilon_{\omega_1}\varepsilon_{\omega_2}, \\[4pt]
t_{\{\theta\}}t_{\{\theta,\omega_1,\omega_2\}} &= \varepsilon_{\omega_1}\varepsilon_{\omega_2}
\left( \delta_\theta + n \delta_{\omega_1}^{-1}\delta_{\omega_2}^{-1}\right), \\[4pt]
t_{\{\theta_1,\theta_2,\omega_1\}}t_{\{\theta_1,\theta_2,\omega_2\}} &= 
\varepsilon_{\omega_1}\varepsilon_{\omega_2}\left(
\delta_{\theta_1}\delta_{\theta_2}+\delta_{\psi_1}\delta_{\psi_2}+
n(\delta_{\omega_1}^{-1}+\delta_{\omega_2}^{-1})\right),\\[4pt]
t_{\{\omega\}}^2 &= \delta_\omega, \\[4pt]
t_{I}^2 &= \prod_{\omega \in I} \delta_{\omega}+\prod_{\omega \in \Omega\setminus I} \delta_{\omega}  + 2n
\qquad (\#I = 3),
\end{align*}
where in the third identity $\psi_1$ and $\psi_2$ denote the two roots in 
$\Omega\setminus \{\omega_1,\omega_2,\theta_1,\theta_2\}$.
Note that $n \in k$ and for each $\omega$ we have $\delta_\omega \in k(\Omega)$. 
This implies $t_I^2 \in k(\Omega)$ for each $I$ with $\#I \in \{1,3\}$. 
Since $\sL^{(2)}_0$ is generated by square monomials $c_I * c_I$, we find that 
$g^*(q)$ is defined over $k(\Omega)$ for each $q \in \sI_0$ that is itself defined 
over $k(\Omega)$. As before it is not worth writing this down here for a set 
of generators of $\sI_0$ except for $q$ as in \eqref{niceqs}, in which case we get 
\begin{align}\label{niceqstwist}
g^*(q) = \sum_\omega \omega^j \lambda_\omega \delta_\omega c_{\omega}^2.
\end{align}
Suppose $P \in J[2](\ksep)$ is nonzero and corresponds to the pair $\{\omega_1, \omega_2\}$. 
Then for each $q \in \sL^{(2)}_P$ defined over $k(\Omega)$ the quadratic form 
$\varepsilon_{\omega_1}^{-1}\varepsilon_{\omega_2}^{-1}g^*(q)$ is defined over $k(\Omega)$. 
Applying this to \eqref{oddsymmquad}, \eqref{evensymmone}, and \eqref{evensymmtwo},
we find that the intersection of the ideal of $A_\xi$ with $\sL^{(2)}_P$ 
is generated by 
\begin{equation}\label{oddsymmquadtwist}
\sum_{\theta \neq \omega_1,\omega_2}\left(
 \theta^l \, \, \prod_{\hsmash{\psi \neq \theta, \omega_1,\omega_2}} \, \,(\theta-\psi) \right)
\left( \delta_\theta + n \delta_{\omega_1}^{-1}\delta_{\omega_2}^{-1}\right)\cdot
c_{\theta} * c_{\omega_1\omega_2\theta}\qquad (l=0,1),
\end{equation}
\begin{equation}\label{evensymmonetwist}
\sum_{\hsmash{\pi = \{\{\theta_1,\theta_2\},\{\psi_1,\psi_2\}\}}} \,\, \nu(\pi) \left(
\delta_{\theta_1}\delta_{\theta_2}+\delta_{\psi_1}\delta_{\psi_2}+
n(\delta_{\omega_1}^{-1}+\delta_{\omega_2}^{-1})\right) \cdot 
c_{\theta_1\theta_2\omega_1} * c_{\theta_1\theta_2\omega_2},
\end{equation}
\begin{eqnarray}\label{evensymmtwotwist}
&&c_{\omega_1} * c_{\omega_2} +\\
&&f_6 \cdot \,\,
\sum_{\hsmash{\pi = \{\{\theta_1,\theta_2\},\{\psi_1,\psi_2\}\}}}
\,\, 
\nu(\pi) (\theta_1+\theta_2)(\psi_1+\psi_2) \left(
\delta_{\theta_1}\delta_{\theta_2}+\delta_{\psi_1}\delta_{\psi_2}+
n(\delta_{\omega_1}^{-1}+\delta_{\omega_2}^{-1})\right) \cdot
c_{\theta_1\theta_2\omega_1} * c_{\theta_1\theta_2\omega_2}. 
\nonumber
\end{eqnarray}

\begin{proposition}\label{itsxi}
For any Galois automorphism $\sigma \in \Gal(\ksep/k)$ there is a unique 
point $P(\sigma) \in J[2](\ksep)$ such that $g \circ \sigma(g)^{-1} = T_{P(\sigma)}$ 
in $\Aut J$. The class 
$\xi \in \H^1(J[2])$ is represented by the cocycle 
$\sigma \mapsto P(\sigma)$.
\end{proposition}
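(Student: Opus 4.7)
The plan is to compute the automorphism $g \circ \sigma(g)^{-1}$ of $J$ on the level of homogeneous coordinates, to recognize it as the image under $\rho$ of the element $m = \sigma(\varepsilon)/\varepsilon \in \Mm$, and then to invoke Proposition~\ref{cp} to identify it with a translation by a two-torsion point.

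First I would compute how $\sigma(g)$ acts on $\sL$. The basis elements $c_I$ are Galois-equivariant in the sense that $\sigma(c_I) = c_{\sigma(I)}$, since both Definition~\ref{defcomega} and Definition~\ref{defcI} express $c_I$ in terms of Galois-invariant functions ($b_j$ or $k_{ij}$) with scalar coefficients that are symmetric polynomials in $I$ and $\Omega \setminus I$. From the defining relation $g^*(c_I) = t_I\,c_I$ it follows that
$$ \sigma(g)^*(c_I) \;=\; \sigma\bigl(g^*(c_{\sigma^{-1}(I)})\bigr) \;=\; \sigma(t_{\sigma^{-1}(I)})\,c_I . $$
Applying Lemma~\ref{galoist} with $\sigma^{-1}(I)$ in place of $I$ yields $t_I = \chi_I(m)\,\sigma(t_{\sigma^{-1}(I)})$, so
$$ (g \circ \sigma(g)^{-1})^*(c_I) \;=\; t_I \, \sigma(t_{\sigma^{-1}(I)})^{-1} \, c_I \;=\; \chi_I(m)\,c_I $$
for every $I \subset \Omega$ with $\# I \in \{1,3\}$.

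By Proposition~\ref{hahaaction} this diagonal action on $\sL$ coincides with $\rho(m)$, and by Proposition~\ref{cp} the automorphism of $J \subset \P(\check{\sL})$ it induces is precisely $T_{\beta(m)}$. This produces the desired two-torsion point $P(\sigma) = \beta(\sigma(\varepsilon)/\varepsilon)$; uniqueness is immediate since distinct two-torsion points give distinct translations on $J$.

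Finally, the cocycle property of $\sigma \mapsto P(\sigma)$ follows either directly, by composing the equalities $g \circ \sigma(g)^{-1} = T_{P(\sigma)}$ for two Galois automorphisms, or by invoking Lemma~\ref{twocovHone} applied to the two-covering $[2] \circ g \colon A_\xi \to J$. By Proposition~\ref{HoneM} the cocycle $\sigma \mapsto \sigma(\varepsilon)/\varepsilon$ represents $\xibar \in \H^1(\Mm)$, and its image under $\beta_*$ is $\xi$, so the cocycle $\sigma \mapsto \beta(\sigma(\varepsilon)/\varepsilon) = P(\sigma)$ represents $\xi$, as desired. The only point requiring care is the bookkeeping for the sign convention $c_I = -c_{\Omega \setminus I}$ when $\# I = 3$, but since $\sigma$ commutes with complementation in $\Omega$ and $t_I = t_{\Omega \setminus I}$, all potential signs cancel; the essential content of the proposition was already carried out in Sections~\ref{sectionaction} and~\ref{diagonalizing}, so the argument itself is a short verification.
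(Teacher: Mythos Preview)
Your proof is correct and follows essentially the same approach as the paper: both compute the action of $g \circ \sigma(g)^{-1}$ on the basis $\{c_I\}$ via Lemma~\ref{galoist}, identify it with $\rho(m)$ for $m = \sigma(\varepsilon)/\varepsilon$ using Proposition~\ref{hahaaction}, and then invoke Proposition~\ref{cp} to conclude it is $T_{\beta(m)}$. The only cosmetic difference is that you compute $(g \circ \sigma(g)^{-1})^*(c_I)$ directly via $\sigma(g)^*(c_I) = \sigma(t_{\sigma^{-1}(I)})\,c_I$, whereas the paper computes $\sigma(g^*)(\sigma(c_I))$ and then inverts using $\rho(m)^2 = \mathrm{id}$; the content is identical.
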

\begin{proof}
Set $P(\sigma)=\beta(\sigma(\varepsilon)/\varepsilon)$ for all $\sigma \in \Gal(\ksep/k)$.
By Proposition~\ref{HoneM} the class 
$\xibar \in \H^1(M)$ is represented by 
the cocycle $\sigma \mapsto \sigma(\varepsilon)/\varepsilon$, so 
$\xi \in \H^1(J[2])$ is represented by the cocycle 
$\sigma \mapsto \beta(\sigma(\varepsilon)/\varepsilon) = P(\sigma)$.
Fix any Galois automorphism $\sigma \in \Gal(\ksep/k)$ and set 
$m=\sigma(\varepsilon)/\varepsilon$. Then $\beta(m)=P(\sigma)$, so
the translation $T_{P(\sigma)}$ on $J \subset \P^{15}$ is induced by the automorphism 
$\rho(m) \in \SL(\sL)$ by Proposition~\ref{cp}. 
Take any subset $I\subset \Omega$ with $\#I \in \{1,3\}$.
Then by Proposition~\ref{hahaaction} and Lemma~\ref{galoist} we have 
$$
\sigma(g^*)(\sigma(c_I))=\sigma(g^*(c_I)) 
= \chi_{\sigma(I)}(m)g^*(\sigma(c_I))
  = g^*\big(\chi_{\sigma(I)}(m)\sigma(c_I)\big) 
  = g^*\big(\rho(m)(\sigma(c_I))\big).
$$
This holds for all $I$, so we get $(g^{-1})^*\circ \sigma(g^*) = \rho(m)$. 
From $m^2=1$ we get $\rho(m)=\rho(m)^{-1}$, so we have 
$\rho(m) = \sigma(g^{-1})^* \circ g^*$ and thus $\rho(m)$ 
induces the automorphism 
$g \circ \sigma(g^{-1})$ on $J$. We conclude $T_{P(\sigma)} 
= g \circ \sigma(g^{-1})$.
Clearly there is at most one point $P$ such that $T_P = g \circ \sigma(g^{-1})$,
so uniqueness follows.
\end{proof}

Let $\pi \colon A_\xi \rightarrow J$ denote the composition 
$\pi = [2] \circ g$. We are now ready to prove our main 
result.

\begin{theorem}\label{maintheorem}
The map $\pi$ endows $A_\xi$ with the structure of a two-covering of $J$
whose isomorphism class corresponds to the cocycle class $\xi$. 
\end{theorem}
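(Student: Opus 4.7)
The plan is to combine the two-covering machinery from Section~\ref{sectionsetup} with the technical results already established in this section. Since $A_\xi$ is defined over $k$ by Proposition~\ref{Axioverk}, and $g\colon A_\xi \to J$ is by construction an isomorphism defined over $\ksep$, to invoke Definition~\ref{twocovering} it remains only to check that $\pi = [2]\circ g$ descends to $k$ and then to identify the associated cohomology class.

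First I would verify that $\pi$ is Galois invariant. For any $\sigma \in \Gal(\ksep/k)$, Proposition~\ref{itsxi} produces $P(\sigma) \in J[2](\ksep)$ with $g\circ\sigma(g)^{-1} = T_{P(\sigma)}$, equivalently $\sigma(g) = T_{-P(\sigma)}\circ g$. Since $[2]\circ T_{Q} = [2]$ for all $Q \in J[2]$, this gives
$$
\sigma(\pi) \;=\; [2]\circ \sigma(g) \;=\; [2]\circ T_{-P(\sigma)}\circ g \;=\; [2]\circ g \;=\; \pi.
$$
Thus $\pi$ is defined over $k$, and $(A_\xi,\pi)$ is a two-covering of $J$ in the sense of Definition~\ref{twocovering}, with $g$ serving as the required isomorphism $\rho$.

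To identify the cocycle class attached to this two-covering, I would apply Lemma~\ref{twocovHone} with the explicit choice $\rho = g$. The lemma asserts that $\tau_{A_\xi}$ is represented by the cocycle $\sigma \mapsto P$, where $P \in J[2](\ksep)$ is the unique element satisfying $g\circ\sigma(g)^{-1} = T_P$. By Proposition~\ref{itsxi}, this element is precisely $P(\sigma)$, and the same proposition identifies the resulting cocycle class with $\xi$. Therefore $\tau_{A_\xi} = \xi$, which is exactly the required statement.

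There is no real obstacle at this stage: the heavy lifting has been performed in Propositions~\ref{Axioverk} and~\ref{itsxi}, so the theorem reduces to a bookkeeping exercise translating those results through Definition~\ref{twocovering} and Lemma~\ref{twocovHone}. The only subtle point to keep in mind is that the choice of $\rho = g$ is available precisely because $g$ is an isomorphism over $\ksep$ by construction, so the cocycle produced by Lemma~\ref{twocovHone} is literally the one computed in Proposition~\ref{itsxi}, with no further coboundary adjustment needed.
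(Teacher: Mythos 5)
Your proposal is correct and follows essentially the same route as the paper: Galois-invariance of $\pi=[2]\circ g$ via Proposition~\ref{itsxi} together with $[2]\circ T_Q=[2]$ for $Q\in J[2]$, definedness of $A_\xi$ over $k$ from Proposition~\ref{Axioverk}, and identification of the class via Lemma~\ref{twocovHone} applied with $\rho=g$. (Note that writing $\sigma(g)=T_{-P(\sigma)}\circ g$ is the same as the paper's $T_{P(\sigma)}\circ g$ since $P(\sigma)$ is two-torsion.)
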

\begin{proof}
Let $\sigma \in \Gal(\ksep/k)$ be any Galois automorphism.
By Proposition~\ref{itsxi} there is a unique 
point $P \in J[2](\ksep)$ such that $g \circ \sigma(g)^{-1} = T_P$, 
so $\sigma(g) = T_P \circ g$. Then 
$$
\sigma(\pi) = \sigma([2]) \circ \sigma(g) = [2] \circ T_P \circ g = [2] \circ g = \pi,
$$
because $[2] \circ T_P = [2]$ as $2P=0$. This holds for all 
$\sigma$, so $\pi$ is defined over $k$. Also $A_\xi$ is defined 
over $k$ by Proposition~\ref{Axioverk}. There is 
an isomorphism $g\colon (A_\xi)_\ksep \rightarrow J_\ksep$ such that 
$\pi = [2] \circ g$, so $\pi$ endows $A_\xi$ with the structure of a 
two-covering of $J$. By Lemma~\ref{twocovHone} and Proposition~\ref{itsxi} its 
 $k$-isomorphism class corresponds to the cocycle class $\xi$. 
\end{proof}

Recall that for any two-covering $(A,\pi)$ of $J$ the isomorphism 
$h \colon A_\ksep \rightarrow J_\ksep$ is well defined up to 
translation $T_P$ by a two-torsion point $P$ (Lemma~\ref{offby2torsion}). 
Since multiplication by $-1$ on $J$ commutes with $T_P$, there is a 
well-defined involution $\iota \colon A \rightarrow A, x \mapsto h^{-1}(-h(x))$ 
of $A$, defined over $k$. On our two-covering $A_\xi$ this involution is 
given by negating all six coordinates $c_{\omega}$. The quotient 
$\X_\delta$ is the projection of $A_\xi$ onto the $10$ coordinates 
$c_I$ for all $I \subset \Omega$ with $\# I = 3$. This quotient 
is isomorphic to $\X$ over $\ksep$. The projection $\Y_\delta$ of 
$A_\xi$ onto the coordinates $c_{\omega}$ for all $\omega$ 
is isomorphic to the blow-up of $\X_\delta$ in its $16$ singular points. 
The surface $\Y_\delta$ is the vanishing set of the three quadratic forms 
given in \eqref{niceqstwist}. These correspond to the 
polynomials $Q_0^{(\delta)} , Q_1^{(\delta)} , Q_2^{(\delta)}$ in 
Proposition~\ref{Veqsdiag}.  This shows that the 
blow-up $\Y_\delta$ of the quotient $\X_\delta$ of the twist $A_\xi$ 
of the Jacobian $J$ is isomorphic to the twist $V_\delta$ of 
the blow-up $\Y$ of the quotient $\X$ of $J$.

\appendix

\section{Generators for $\sI_{0,+}$}\label{appone}

Choose a set $\sS$ of ten subsets of $\Omega$, each of cardinality $3$,
such that for each partition $\pi=\{\pi_1,\pi_2\} \in \Xi/
\langle \Omega \rangle$ of $\Omega$ into two 
parts of cardinality $3$ there is a unique element of $\sS$, denoted
$I_\pi$, with $I_\pi \in \pi$. For each partition $\pi=\{\pi_1,\pi_2\} \in 
\Xi/ \langle \Omega \rangle$ with $\#\pi_1 = \#\pi_2 = 3$ we set 
$c_\pi = c_{I_\pi}$; 
for all integers $i,j$ with $1 \leq i \leq j \leq 4$ we set 
$$
\mu_{ij}(\pi) = \kappa_{ij}(I_\pi) - \kappa_{ij}(\Omega\setminus I_\pi),
$$
with $\kappa_{ij}$ as in Proposition~\ref{kappas}.
Note that $c_\pi$ and $\mu_{ij}(\pi)$ depend on the choice 
of $\sS$, but their product $\mu_{ij}(\pi)c_\pi$, as well as 
$c_\pi*c_\pi$ and $\mu_{i_1j_1}(\pi)\cdot 
\mu_{i_2j_2}(\pi)$ for any $i_1,j_1,i_2,j_2$ do not, and we can write 
$$
k_{ij} = \sum_\pi \mu_{ij}(\pi) \, c_\pi
$$
unambiguously, where $\pi$ runs over all partitions of $\Omega$ 
into two parts of cardinality $3$.

As mentioned at the end of Section~\ref{diagonalizing}, the space 
$\sI_{0,+} \cap \Sym^2 \sL(\Theta_++\Theta_-)$ is generated by 
the projection onto $\sL^{(2)}_{0,+}$ of 
the quadratic forms  $k_{12}^2-k_{11}k_{22}$, $k_{12}k_{13}-k_{11}k_{23}$,
$k_{13}^2-k_{11}k_{33}$, $k_{13}k_{23}-k_{12}k_{33}$, $k_{23}^2-k_{22}k_{33}$, 
and the quadric $g_\X$ in \eqref{gX} that defines the model of $\X$ inside
the $2$-uple embedding of $\P^3$ into $\P^9$. 
These projections are
\begin{align*}
&\sum_{\pi} (\mu_{12}(\pi)^2-\mu_{11}(\pi)\mu_{22}(\pi)) \, c_\pi*c_\pi,\\
&\sum_{\pi} (\mu_{12}(\pi)\mu_{13}(\pi)-\mu_{11}(\pi)\mu_{23}(\pi))  
\, c_\pi*c_\pi,\\
&\sum_{\pi} (\mu_{13}(\pi)^2-\mu_{11}(\pi)\mu_{33}(\pi))  \, c_\pi*c_\pi,\\
&\sum_{\pi} (\mu_{13}(\pi)\mu_{23}(\pi)-\mu_{12}(\pi)\mu_{33}(\pi))  
\, c_\pi*c_\pi,\\
&\sum_{\pi} (\mu_{23}^2(\pi)-\mu_{22}(\pi)\mu_{33}(\pi))  \, c_\pi*c_\pi,\\
&\sum_{\pi} \mu_X(\pi)  \, c_\pi*c_\pi,\\
\end{align*}
with 
\begin{align}\label{muX}
 \mu_\X=&(-4f_0f_2+f_1^2)\mu_{11}^2-4f_0f_3\mu_{11}\mu_{12}
 -2f_1f_3\mu_{11}\mu_{13}-4f_0\mu_{11}\mu_{14}-4f_0f_4\mu_{12}^2+\cr
 &(4f_0f_5-4f_1f_4)\mu_{12}\mu_{13}-2f_1\mu_{11}\mu_{24}
 +(-4f_0f_6+2f_1f_5-4f_2f_4+f_3^2)\mu_{13}^2-\cr
 &4f_2\mu_{11}\mu_{34}-4f_0f_5\mu_{12}\mu_{22}
 +(8f_0f_6-4f_1f_5)\mu_{13}\mu_{22}+(4f_1f_6-4f_2f_5)\mu_{13}\mu_{23}-\cr
 &2f_3\mu_{13}\mu_{24}-2f_3f_5\mu_{13}\mu_{33}-4f_4\mu_{13}\mu_{34}
 -4\mu_{14}\mu_{34}-4f_0f_6\mu_{22}^2-4f_1f_6\mu_{22}\mu_{23}-\cr
 &4f_2f_6\mu_{23}^2+\mu_{24}^2-4f_3f_6\mu_{23}\mu_{33}
 -2f_5\mu_{23}\mu_{34}+(-4f_4f_6+f_5^2)\mu_{33}^2-4f_6\mu_{33}\mu_{34},
\end{align}
and where $\pi$ runs again over all partitions of $\Omega$ into 
two parts of size $3$. The three quadratic forms  
\begin{align}
\sum_\omega \omega^j \lambda_\omega c_{\omega}*c_{\omega}
\end{align}
for $j=0,1,2$, mentioned in \eqref{niceqs}, are also contained in $\sI_{0,+}$.
The $12$-dimensional space $\sI_{0,+}$ is generated by these $9$ quadratic 
forms and the projection onto $\sL^{(2)}_{0,+}$ of the forms given in 
\eqref{bsintermsofks}. These projections are 
$$
\sum_{\omega \in \Omega} \omega^r c_{\omega}*c_{\omega} - \sum_\pi \nu_r(\pi) \, c_\pi*c_\pi
$$
for $r=0,\ldots,6$ respectively, with 
\begin{align*}
\nu_0 &= f_2\mu_{11}^2 + f_3\mu_{11}\mu_{12} + \mu_{11}\mu_{14} + f_6\mu_{11}\mu_{33} + f_4\mu_{12}^2 - f_5\mu_{12}\mu_{13} + f_5\mu_{12}\mu_{22} - 2f_6\mu_{13}\mu_{22} + f_6\mu_{22}^2, \cr
2\nu_1 &= -f_1\mu_{11}^2 + f_3\mu_{11}\mu_{13} + 2f_4\mu_{11}\mu_{23} + \mu_{11}\mu_{24} - f_5\mu_{11}\mu_{33} - 2f_6\mu_{12}\mu_{33} + 2f_5\mu_{13}\mu_{22} + 2f_6\mu_{22}\mu_{23}, \cr
\nu_2 &= f_0\mu_{11}^2 + f_4\mu_{13}^2 + \mu_{13}\mu_{14} + f_5\mu_{13}\mu_{23} + f_6\mu_{22}\mu_{33}, \cr
2\nu_3 &= 2f_0\mu_{11}\mu_{12} + f_1\mu_{11}\mu_{13} - f_3\mu_{13}^2 + \mu_{13}\mu_{24} + f_5\mu_{13}\mu_{33} + 2f_6\mu_{23}\mu_{33}, \cr
\nu_4 &= f_0\mu_{11}\mu_{22} + f_1\mu_{11}\mu_{23} + f_2\mu_{11}\mu_{33} + \mu_{14}\mu_{33} + f_6\mu_{33}^2, \cr
2\nu_5 &= - f_1\mu_{11}\mu_{33} - 2f_0\mu_{12}\mu_{13} + 2f_0\mu_{12}\mu_{22} + 2f_2\mu_{12}\mu_{33} + 2f_1\mu_{13}\mu_{22} + f_3\mu_{13}\mu_{33} + \mu_{24}\mu_{33} - f_5\mu_{33}^2, \cr
\nu_6 &= f_0\mu_{11}\mu_{33} - 2f_0\mu_{13}\mu_{22} - f_1\mu_{13}\mu_{23} + f_0\mu_{22}^2 + f_1\mu_{22}\mu_{23} + f_2\mu_{23}^2 + f_3\mu_{23}\mu_{33} + f_4\mu_{33}^2 + \mu_{33}\mu_{34}.
\end{align*}
Note that all $16$ quadratic forms in $\sI_{0,+}$ given in this appendix
are Galois invariant and can therefore also be expressed in the coordinates 
$k_{11}, k_{12}, \ldots, k_{44}, b_1, \ldots, b_6$ with coefficients in the 
ground field $k$. 

\section{Galois-invariant equations for the twist of the Jacobian}

We continue with the notation of Section~\ref{twistingsection}.
In particular we have 
$\xi \in P^1(J[2]) \subset \H^1(J[2])$ and 
$\delta \in L$ and $n \in k$, such 
that $N_{L/k}(\delta) = n^2$ and such that 
$\gamma\big((\delta,n)\big) = \xibar$ with $\gamma$ as in 
Proposition~\ref{HoneM}. We also have an element $\varepsilon \in \Lsep$ 
such that $\varepsilon^2 = \delta$ and $N_{\Lsep/\ksep}(\varepsilon)=n$,
and $A_\xi$ is the two-covering associated to $\xi$.

In this appendix we combine the previously given equations for 
$A_\xi$ to Galois-invariant equations in terms of Galois-invariant 
coordinates. For the odd coordinates we use $b_1,\ldots, b_6$ 
as before. For the even coordinates we do not use a specific 
system as it seems very plausible that the equations can be 
expressed more compactly in terms of other coordinates than 
$k_{11}, k_{12}, \ldots, k_{44}$. 

Let $\rho_0, \ldots, \rho_9$ be functions from the set of all 
subsets of $\Omega$ of cardinality $3$ to $k(\Omega)$ such that 
for each $i$ and each $I$ we have $\rho_i(I) = 
-\rho_i(\Omega \setminus I)$ and for each Galois automorphism 
$\sigma$ we have $\sigma(\rho_i(I)) =  \rho_i( \sigma(I) )$ and 
such that if $I_1, ..., I_{10}$  are subsets of size 3 
representing all partitions in two parts of size 3, then the matrix
$H = ( \rho_i(I_j) )_{i,j}$ is invertible. Then 
there is a unique basis $(u_0, \ldots, u_9)$ 
of $\Sym^2 (\Theta_++\Theta_-)$ of Galois-invariant elements 
determined by 
$$
c_I = \sum_{i=0}^9 \rho_i(I) \, u_i
$$
for all subsets $I \subset \Omega$ of size $3$. This is the basis 
of $\Sym^2 (\Theta_++\Theta_-)$ that we use, and it depends on 
the functions $\rho_i$. For instance, if we index the $\rho_i$ and 
$u_i$ by pairs $i,j$, abbreviated by $ij$, with $1\leq i \leq j \leq 4$, 
rather than by integers, and we set 
$$
\rho_{ij}(I) = \frac{\lambda_{ij}(I)}{4\prod_{\omega \in I}
\prod_{\psi \not \in I} (\psi - \omega)},
$$
with $\lambda_{ij}$ as in Definition~\ref{defcI},
then we get $u_{ij} = k_{ij}$. 

\begin{remark}
Note that if in a specific case the set 
of ten partitions into two parts of size three is the disjoint union 
of smaller Galois orbits, then for each Galois orbit $T$ we could find 
a basis of Galois-invariant elements for the space generated by 
$\{c_\pi \,\, : \,\, \pi \in T\}$. This may yield more efficient
equations than those coming from the general case.
\end{remark}

Choose $15$ functions $h_1,\ldots,h_{15}$ from the set 
$J[2](\ksep) \setminus \{0\}$, or equivalently, the set of 
the $15$ unordered pairs $\{\omega_1,\omega_2\} \subset \Omega$,
to $k(\Omega)$ 
so that each $h_r$ is Galois equivariant (i.e., 
for each $P \in J[2](\ksep) \setminus \{0\}$ and each Galois automorphism
$\sigma$ we have $h_r(\sigma(P)) = \sigma(h_r(P))$) and such that the
matrix 
$$
\Big( h_r(P) \Big)_{\substack{1\leq r \leq 15 \hfill \\[3pt] P \in J[2](\ksep) \setminus \{0\} }}
$$
is invertible.
Then for fixed 
$l \in \{0,1\}$ the $15$-dimensional subspace of $\Sym^2 \sL$ generated by all 
quadratic forms of the form \eqref{oddsymmquadtwist} with nonzero $P\in J[2](\ksep)$ is also 
generated by the $15$ quadratic forms 
\begin{align*}
&\sum_{\hsmash{P \leftrightarrow \{\omega_1,\omega_2\}}} h_r(P) 
\left(\sum_{\theta \neq \omega_1,\omega_2}\Big(
 \theta^l \, \, \prod_{\hsmash{\psi \neq \theta, \omega_1,\omega_2}} \, \,(\theta-\psi) \Big)
\left( \delta_{\omega_1}\delta_{\omega_2}\delta_\theta + n \right)
c_{\theta} * c_{\omega_1\omega_2\theta}\right) \cr
=&\sum_{\hsmash{P \leftrightarrow \{\omega_1,\omega_2\}}} h_r(P) 
\left(\sum_{\theta \neq \omega_1,\omega_2}\Big(
 \theta^l \, \, \prod_{\hsmash{\psi \neq \theta, \omega_1,\omega_2}} \, \,(\theta-\psi) \Big)
\left( \delta_{\omega_1}\delta_{\omega_2}\delta_\theta + n \right)
\Big(\sum_{j=1}^6 (S^{-1})_{\theta j}\cdot b_j\Big) * 
\Big(\sum_{i=0}^9 \rho_i(\{\omega_1,\omega_2,\theta\}) \, u_i\Big)
\right) \cr
=&\sum_{j=1}^6\sum_{i=0}^9\left(\sum_{\{\omega_1,\omega_2\}} h_r(P) 
\sum_{\theta \neq \omega_1,\omega_2}(S^{-1})_{\theta j} \Big(
 \theta^l \, \,  \prod_{\hsmash{\psi \neq \theta, \omega_1,\omega_2}} 
\, \,(\theta-\psi) \Big) \rho_i(\{\omega_1,\omega_2,\theta\})
\left( \delta_{\omega_1}\delta_{\omega_2}\delta_\theta + n \right)
  \,\right) b_j*u_i \cr
\end{align*} 
with $1 \leq r \leq 15$, where $(S^{-1})_{\theta j}$ is the entry in 
the row corresponding to $\theta$ and column $j$ in the inverse of 
the matrix 
$$
S=
\left(
\begin{array}{cccc}
1 & 1 & \cdots & 1 \\
\omega_1 & \omega_2 & \cdots & \omega_6 \\
\vdots & \vdots & & \vdots \\
\omega_1^5 &\omega_2^5&\cdots& \omega_6^5
\end{array}
\right).
$$
For each 
$l\in \{0,1\}$ these $15$ quadratic forms are all Galois 
invariant. 

\begin{remark}
Note that in specific cases, instead of summing over all 
nontrivial two-torsion points, in order to obtain Galois-invariant 
quadratic forms, it suffices to sum over all points in a Galois orbit;
each orbit yields a quadratic form for each $h_r$, so that fewer than 
$15$ functions $h_r$ will suffice.
\end{remark}

Similarly, the subspace of $\Sym^2 \sL$ generated by all 
quadratic forms of the form \eqref{evensymmonetwist} 
with nonzero $P\in J[2](\ksep)$ is also generated by the $15$ 
quadratic forms 
\begin{align*}
\sum_{i=0}^9\sum_{j=0}^9&\left(\sum_{\{\omega_1,\omega_2\}} h_r(P) \quad
\sum_{\hsmash{\pi = \{\{\theta_1,\theta_2\},\{\psi_1,\psi_2\}\}}} 
\,\, \nu(\pi) \, \rho_i(\{\theta_1,\theta_2,\omega_1\}) 
\rho_j(\{\theta_1,\theta_2,\omega_2\}) \cdot \right. \\
&\qquad \qquad \left. \phantom{\sum_{\{\omega_1,\omega_2\}}} \left(
\delta_{\omega_1}\delta_{\omega_2}(\delta_{\theta_1}\delta_{\theta_2}+
         \delta_{\psi_1}\delta_{\psi_2})+
n(\delta_{\omega_1}+\delta_{\omega_2})\right) \right) u_i * u_j
\end{align*}
for $1\leq r \leq 15$, each defined over $k$. 
And finally, the subspace of $\Sym^2 \sL$ 
generated by all quadratic forms of the form \eqref{evensymmtwotwist} 
with nonzero $P\in J[2](\ksep)$ is also generated by the $15$ 
quadratic forms 
\begin{align*}
&\sum_{i=1}^6\sum_{j=1}^6\left(\sum_{\{\omega_1,\omega_2\}} 
h_r(P) (S^{-1})_{\omega_1i}(S^{-1})_{\omega_2j} \right) b_i*b_j \cr
&+f_6 \cdot \,\,
\sum_{i=0}^9\sum_{j=0}^9\left(\sum_{\{\omega_1,\omega_2\}} h_r(P) \quad
\sum_{\hsmash{\pi = \{\{\theta_1,\theta_2\},\{\psi_1,\psi_2\}\}}} 
\,\, \nu(\pi) (\theta_1+\theta_2)(\psi_1+\psi_2)\, 
\rho_i(\{\theta_1,\theta_2,\omega_1\}) 
\rho_j(\{\theta_1,\theta_2,\omega_2\}) \cdot \right. \\
&\qquad \qquad \left. \phantom{\sum_{\{\omega_1,\omega_2\}}} \left(
\delta_{\omega_1}\delta_{\omega_2}(\delta_{\theta_1}\delta_{\theta_2}+
         \delta_{\psi_1}\delta_{\psi_2})+
n(\delta_{\omega_1}+\delta_{\omega_2})\right) \right) u_i * u_j
\end{align*}
for $1 \leq r \leq 15$, each defined over the ground field $k$.
All together, in this appendix we have seen $60$ Galois-invariant 
quadratic forms generating the subspace of 
$\bigoplus_{P\neq 0} \sL^{(2)}_P \subset \Sym^2 \sL$ consisting 
of those forms that vanish on $A_\xi$.
Recall that for each subset $I \subset \Omega$ of size $3$, the 
element 
$$
t_I^2 = \prod_{\omega \in I} \delta_\omega + 
\prod_{\omega \not \in I} 
\delta_\omega + 2n
$$ 
is defined over $k(\Omega)$, with 
$\delta_\omega = \varphi_\omega(\delta)$. 
The $12$-dimensional subspace of $\sL^{(2)}_{0,+}$ of quadratic forms 
vanishing on $A_\xi$ is generated by the forms obtained from those in 
Appendix~\ref{appone} by substitution of $t_Ic_I$ for $c_I$ for each 
$I$. These new forms are
\begin{align*}
&\sum_{\pi} (\mu_{12}(\pi)^2-\mu_{11}(\pi)\mu_{22}(\pi)) 
\big(\prod_{\omega \in \pi_1} \delta_\omega + 
\prod_{\omega\in\pi_2} \delta_\omega + 2n\big)\, c_\pi*c_\pi,\\
&=\sum_{i=0}^9\sum_{j=0}^9\left(
\sum_{\pi} (\mu_{12}(\pi)^2-\mu_{11}(\pi)\mu_{22}(\pi)) 
\rho_i(\pi_1)\rho_j(\pi_1)\big(\prod_{\omega \in \pi_1} \delta_\omega + 
\prod_{\omega\in\pi_2} \delta_\omega + 2n\big)\right)\, u_i*u_j
\end{align*}
and, similary,
\begin{align*}
&\sum_{i=0}^9\sum_{j=0}^9\left(\sum_{\pi} 
(\mu_{12}(\pi)\mu_{13}(\pi)-\mu_{11}(\pi)
\mu_{23}(\pi))\rho_i(\pi_1)\rho_j(\pi_1)
\big(\prod_{\omega \in \pi_1} \delta_\omega + 
\prod_{\omega\in\pi_2} \delta_\omega + 2n\big) \right)\, u_i*u_j, \\
&\sum_{i=0}^9\sum_{j=0}^9\left(\sum_{\pi} (\mu_{13}(\pi)^2
-\mu_{11}(\pi)\mu_{33}(\pi)) 
\rho_i(\pi_1)\rho_j(\pi_1)\big(\prod_{\omega \in \pi_1} \delta_\omega + 
\prod_{\omega\in\pi_2} \delta_\omega + 2n\big)\right)\, u_i*u_j, \\
&\sum_{i=0}^9\sum_{j=0}^9\left(\sum_{\pi} (\mu_{13}(\pi)\mu_{23}(\pi)-
\mu_{12}(\pi)\mu_{33}(\pi))\rho_i(\pi_1)\rho_j(\pi_1)
\big(\prod_{\omega \in \pi_1} \delta_\omega + 
\prod_{\omega\in\pi_2} \delta_\omega + 2n\big)\right)\, u_i*u_j, \\
&\sum_{i=0}^9\sum_{j=0}^9\left(\sum_{\pi} 
(\mu_{23}^2(\pi)-\mu_{22}(\pi)\mu_{33}(\pi)) 
\rho_i(\pi_1)\rho_j(\pi_1)\big(\prod_{\omega \in \pi_1} \delta_\omega + 
\prod_{\omega\in\pi_2} \delta_\omega + 2n\big)\right)\, u_i*u_j,\\
&\sum_{i=0}^9\sum_{j=0}^9\left(\sum_{\pi} \mu_\X(\pi)
\rho_i(\pi_1)\rho_j(\pi_1)\big(\prod_{\omega \in \pi_1} \delta_\omega + 
\prod_{\omega\in\pi_2} \delta_\omega + 2n\big)\right)\, u_i*u_j,
\end{align*}
and 
$$
\sum_{i=1}^6\sum_{j=1}^6\left(\sum_\omega \omega^r \lambda_\omega 
(S^{-1})_{\omega i}(S^{-1})_{\omega j}\delta_\omega 
\right)\, b_i*b_j
$$
for $0\leq r \leq 2$ and 
\begin{align*}
\sum_{i=1}^6\sum_{j=1}^6&\left(\sum_\omega \omega^r
(S^{-1})_{\omega i}(S^{-1})_{\omega j}\delta_\omega 
\right)\, b_i*b_j - \\
&\sum_{i=0}^9\sum_{j=0}^9\left(
\sum_\pi \nu_r(\pi) \rho_i(\pi_1)\rho_j(\pi_1)\big(\prod_{\omega \in \pi_1} 
\delta_\omega + \prod_{\omega\in\pi_2} \delta_\omega + 2n\big) \right)\, u_i*u_j
\end{align*}
for $0\leq r\leq 6$ with $\nu_r(\pi)$ as in Appendix~\ref{appone}.
These quadratic polynomials are all defined over the ground field $k$
and so we have found a set of Galois-invariant quadratic forms 
that generate the ideal of polynomials vanishing on $A_\xi$.

\begin{remark}
If $\delta \in L = k[X]/(f)$ is given as 
$\delta = \sum_{i=0}^5 d_i X^i$, then we have 
$\delta_\omega = \sum_{i=0}^5 d_i \omega^i$ for each $\omega$. 
Thus the given quadratic forms in terms of the coordinates
$u_0,\ldots,u_9,b_1,\ldots,b_6$ have coefficients that are 
themselves polynomials in terms of $d_0,\ldots,d_5$ and $n$ with coefficients 
that are symmetric in the roots of $f$, so these coefficients can 
be expressed in terms of $f_0,\ldots,f_6$. 
\end{remark}

After finding Galois-invariant equations for the two-covering $A_\xi$,
we end by giving the associated map $A_\xi \rightarrow J$ that is 
a twist of multiplication by $2$ on $J$. 
Let $G$ be the matrix whose $r$-th row is 
$$
\frac{1}{4}\left(\prod_{\omega \in I_r}\prod_{\psi \in \Omega\setminus I_r}(\psi - \omega)^{-1} \right) \cdot  
\left( \,\, \lambda_{11}(I_r) \qquad \lambda_{12}(I_r)  \qquad \lambda_{13}(I_r) 
\qquad \cdots \qquad \lambda_{44}(I_r) \,\,\right),
$$
i.e., the coefficients of $c_{I_r}$ with respect to the basis 
$(k_{11}, k_{12}, \ldots, k_{44})$. Then $G^{-1}$ is described in 
the proof of Proposition~\ref{kappas}.
Let $H$ be the invertible matrix whose $r$-th row is 
$$
\left( \,\, \rho_0(I_r) \qquad \rho_{1}(I_r)  \qquad \rho_{2}(I_r) 
\qquad \cdots \qquad \rho_{9}(I_r) \,\,\right).
$$
Let $T_1$ be the diagonal matrix whose $r$-th diagonal entry 
is $t_{I_r}$ for $1\leq r \leq 10$, and let 
$T_2$ be the diagonal matrix whose $r$-th diagonal entry 
is $t_{\{\omega_r\}}$ for $1\leq r \leq 6$, where the elements of 
$\Omega$ are numbered as in the definition of the matrix $S$. 
Then the isomorphism  
$g \colon (A_\xi)_\ksep \rightarrow J_\ksep$ of Section~\ref{twistingsection}
is given by 
$$
(u_0:\cdots : u_9 : b_1 : \cdots : b_6) \mapsto 
(k_{11} : k_{12} : \cdots : k_{44} : b_1' : \cdots : b_6') 
$$
with 
$(k_{11}, k_{12},\ldots,k_{44})^{\rm t} = G^{-1}T_1H (u_0,\ldots,u_9)^{\rm t}$ 
and $(b_1', \ldots, b_6')^{\rm t} = ST_2S^{-1} (b_1, \ldots, b_6)^{\rm t}$. 
This map depends on the choice of $\varepsilon$, but the composition 
$[2] \circ g$ does not. This composition is defined over $k$ and 
endows $A_\xi$ with the structure of a 
two-covering by Theorem~\ref{maintheorem}.

\end{document}